\documentclass[12pt, a4paper]{amsart}
\usepackage{amsfonts}
\usepackage{amsthm}
\usepackage{amsmath}
\usepackage{longtable}
\usepackage{amssymb}
\usepackage{latexsym,verbatim}
\usepackage{color}
\definecolor{darkblue}{rgb}{0,0,.5}
\usepackage[colorlinks=true,linkcolor=darkblue,urlcolor=violet,citecolor=magenta]{hyperref}
\usepackage{fullpage}

\usepackage{graphs}

\theoremstyle{plain} \newtheorem{lem}{Lemma}[section]
\theoremstyle{plain} \newtheorem{prop}[lem]{Proposition}
\theoremstyle{plain} \newtheorem{thm}[lem]{Theorem}
\theoremstyle{plain} \newtheorem{cor}[lem]{Corollary}

\theoremstyle{definition}
\newtheorem{alg}{Algorithm}
\newtheorem{rem}[lem]{Remark}

\newcommand{\Q}{\mathbb{Q}}

\newcommand{\C}{\mathbb{C}}

\newcommand{\Z}{\mathbb{Z}}
\newcommand{\Aut}{\mathrm{\mathop{Aut}}}
\newcommand{\Hom}{\mathrm{\mathop{Hom}}}
\newcommand{\GL}{\mathrm{\mathop{GL}}}
\newcommand{\SL}{\mathrm{\mathop{SL}}}
\newcommand{\ad}{\mathrm{\mathop{ad}}}
\newcommand{\Lie}{\mathrm{Lie}\,}
\newcommand{\rank}{\mathrm{\mathop{rank}}}

\newcommand{\g}{\mathfrak{g}}
\newcommand{\h}{\mathfrak{h}}
\newcommand{\p}{\mathfrak{p}}
\newcommand{\z}{\mathfrak{z}}

\newcommand{\ssl}{\mathfrak{sl}}

\newcommand{\gt}{\mathfrak}

\numberwithin{equation}{section}

\begin{document}
\hfill {\scriptsize July 10, 2011} 
\vskip1ex

\title[Closures of nilpotent orbits]
{An effective method to compute closure ordering for nilpotent orbits of $\theta$-representations} 
\author{W.A. de Graaf}
\address{Dipartimento di Matematica,
Universit\`{a} di Trento, via Sommarive 14, 
I-38100 Povo (Trento) Italy}
\email{degraaf@science.unitn.it}
\author{E.B. Vinberg}
\address{Chair of High Algebra,
M.V. Lomonosov Moscow State University, 
Vorob'ievy Gory, 119899 Moscow Russia}
\email{vinberg@zebra.ru}
\author{O.S. Yakimova}
\address{Mathematisches Institut, 
Universit\"at Erlangen-N\"urnberg,
Bismarckstrasse 1\,1/2,
91054 Erlangen  Germany}
\email{yakimova@mpim-bonn.mpg.de}
\begin{abstract}
We develop an algorithm for computing the closure of a given nilpotent $G_0$-orbit 
in $\gt  g_1$, where $\gt g_1$ and $G_0$ are coming from a $\Z$ or a $\Z/m\Z$-grading 
$\gt g= \bigoplus \g_i$ of 
a simple complex Lie algebra $\gt g$.  
\end{abstract}
\maketitle

\section{Introduction}

One of the main tasks of 
mathematics is to describe certain objects up to a certain equivalence relation. 
Often this relation is given by an algebraic group action.  
Then equivalence classes are orbits and 
orbit closures correspond to degenerations of our objects. 
Thus, describing orbits of algebraic actions, 
as well as deciding whether one orbit lies in the closure of 
another, is an important and 
interesting problem. However, this is possible only in a
very few cases. One of these instances 
is provided  by the  $\theta$-groups 
introduced by the second author in the seventies, 
see \cite{vinberg}, \cite{vinberg2}. 

Let $G$ be a connected 
reductive complex 
algebraic group and $\g=\Lie G$ its Lie algebra. 
Let $\theta$ be a diagonalisable automorphism of $\g$ that either 
defines a $\Z$ or a $\Z/m\Z$-grading  
$\g =\bigoplus \g_i$,  where the grading components $\g_i$ are 
the eigenspace of $\theta$. 
Note that $\gt g_0=\gt g^\theta$ is the subset of $\theta$-stable points.
Let $G_0\subset G$ be a connected algebraic subgroup such that
$\Lie G_0=\gt g_0$. If $\theta$ extends to an automorphism of $G$, then
$G_0=(G^{\theta})^{\circ}$.
The group $G_0$ is reductive and its natural action on 
$\g_1$ is called a {\it $\theta$-representation}; 
the group $G_0$, together with its action on $\g_1$, is called a {\it $\theta$-group}. 

An orbit $G_0x\subset \g_1$ is said to be {\em semisimple} if it is closed, and
{\em nilpotent} if its closure $\overline{G_0x}$ contains 0. This is the case if and only if $x$ is 
semisimple (respectively nilpotent) as an element of $\g$. 
The elements of $\gt g_1$ inherit the Jordan decomposition 
$x=s+n$ from $\gt g$. Besides, $G_0$-orbits $G_0(s+n)$ with the semisimple part $s$ being fixed up to conjugation are classified by the nilpotent orbits 
of the $\theta$-group coming from the pair $(\gt g_s,\theta|_{\gt g_s})$,
where $\gt g_s\subset\g$ is
the centraliser of $s$ (and it is a reductive Lie algebra) 
and $\theta|_{\gt g_s}$ is the restriction 
of $\theta$ to $\gt g_s$. This indicates that nilpotent orbits 
are  especially interesting . 
The $\theta$-groups have several remarkable properties,
one of them is that there are only finitely many nilpotent $G_0$-orbits in $\gt g_1$ 
and there is a method to classify them \cite{vinberg2}.

From now on suppose that $\gt g$ is simple. 
We will say that a $\theta$-group is exceptional (respectively classical),
if $\g$ is exceptional (respectively classical).
The classical case allows a more or less uniform treatment, since here 
everything is determined by the canonical embedding into an appropriate 
$\gt{gl}_n$, see e.g. \cite{vinberg}. For inner automorphisms 
of $\gt{gl}_n$, the nilpotent orbits as well as their closures are described by  
Kempken (\cite{Gisela}). The complete answer, for all classical types and all automorphisms, is not known, but there does not seem to
be any profound difficulty in getting it. 

More interesting representations arise in the context of exceptional $\theta$-groups.
Here several orbit classifications were carried out along the lines of
\cite{vinberg2}. To mention a few, \cite{elashvin}, \cite{an}, \cite{an-el}, \cite{gatim}. 
In these papers, all orbits, not only the nilpotent ones, were described. 
More recently, Pervushin treated one $\theta$-group in type $E_7$ \cite{per-1}, he also 
got the closure diagram of the nilpotent orbits \cite{per-2}.

Despite the possibility to treat each particular exceptional $\theta$-group by hand,
the ``classical'' uniformity is lost and one faces a long list of different 
examples. Dealing with all of them by hand is at least difficult. 
Several computer algorithms for classifying nilpotent $G_0$-orbits in $\gt g_1$ 
have been developed, see \cite{Peter} and 
\cite{gra15}. 
In this paper, 
we give a method how to check whether a nilpotent orbit $G_0x$ lies in the closure
$\overline{G_0y}$ of another nilpotent orbit $G_0y$.

Each nilpotent element $e\in \g_1$ can be included into an 
$\gt{sl}_2$-triple $(e,h,f)$ with  $h\in \g_0$.  Our method relies 
on the fact that this $h$ is also a characteristic of $e$ in the sense of 
Kempf and Hesselink, i.e., it gives rise to a one-dimensional torus in $G_0$ 
that takes $e$ to zero fastest.   Another important ingredient is that $G_0e$ coincides  
with a Hesselink stratum, the set of all elements in $\gt g_1$ having $h$ as a Hesselink 
characteristic, see \cite[Section~5]{povin}. Therefore 
$\overline{G_0e}=G_0 \left( V_{\geq 2}(h)\right)$, where $V_{\geq 2}(h)$ is the linear span of all
vectors $v\in\gt g_1$ such that $[h,v]=kv$ with $k\geq 2$.

An orbit $G_0e'$ lies in $\overline{G_0e}$ if and only if its  intersection 
with $V_{\geq 2}(h)$ is non-empty.
When examining $G_0e'\cap V_{\geq 2}(h)$, we replace $G_0$ by the 
union of its Bruhat cells.  
Futher, let $h'$ be a characteristic of $e'$ and $W_0$ the Weyl group of $G_0$. Then 
Propositions~\ref{prop:3} assures that $G_0e'$ is contained in $\overline{G_0e}$ if and only 
if there is $w\in W_0$ such that $U(w)=V_2(h')\cap V_{\geq 2}(wh)$ contains a point of $G_0e'$
(here $V_{2}(h')$ is the set of all
vectors $v\in\gt g_1$ such that $[h',v]=2v$). 
If this is indeed the case, then $U(w)\cap G_0e'$ is an open dense subset of $U(w)$ and 
by taking a random $u\in U(w)$ we can find an element of $G_0e'$ with probability almost one. 
In order to prove that the intersection in question is empty, we compute 
the dimension of a maximal $Z(h')$-orbit intersecting $U(w)$ for the centraliser 
$Z(h')\subset G_0$ of $h'$. Recall that $\dim Z(h')v<\dim Z(h')e'$ for all
elements $v$ in $V_2(h')\setminus G_0e'$ (see Lemma~\ref{lem:0}).
To loop over an orbits of 
the Weyl group, we use its parametrisation as a tree with edges given by 
simple reflections (Section~\ref{Weyl-tree-sec}).  
Other tools are described in Sections~\ref{reduced}, \ref{sec:compl}, and 
\ref{sec:alg}. In particular, to prove
a non-inclusion $G_0e'\not\subset\overline{G_0e}$ for some orbits, we use 
Theorem~\ref{char-reduced}, which is a general statement on
$\Z$-graded reductive Lie algebras and is interesting in itself. It already appeared in the
literature and was proved by Kac in a particular case \cite{Kac-some}, 
see Remark~\ref{rem-kac} for a detailed discussion.

\vskip0.5ex

First examples of $\theta$-groups are provided by the simple Lie 
algebras themselves, i.e., in the case where 
the automorphism is the identity. Then
one asks for the Hasse (closure) diagram of the nilpotent orbits in $\g$. 
The two most difficult, largest exceptional Lie algebras, of types $E_7$ and $E_8$, were treated  by 
Mizuno (\cite{mizuno}). Later his results were verified and corrected 
by Beynon and Spaltenstein (\cite{BS84}). 
The implementation of our method in {\sf GAP} also 
works for $\gt g$. We have computed the Hasse diagrams for the Lie algebras of
exceptional type, and obtained the same diagrams as in Spaltenstein's  book 
\cite{spaltenstein}.

The same problem for real exceptional Lie algebras has been studied by Djokovi{\' c} in a series of
papers \cite{dokoclos1,dokoclos7,dokoclos4,dokoclos3,dokoclos2,
dokoclos6,dokoclos8,dokoclos9}. 
If $\g_{\mathbb R}$ is a non-compact real form of $\g$ and $\gt k\subset\gt g_{\mathbb R}$ is the Lie algebra of a maximal compact subgroup in $G_{\mathbb R}$, then 
the complexification $\gt k(\mathbb C)$ of $\gt k$ is a symmetric subalgebra, 
i.e., $\gt k(\mathbb C)=\gt g^\theta$ for $\theta$ of order two. 
The Kostant-Sekiguchi correspondence (see e.g. \cite[\S 9.5]{colmcgov}) establishes  
a bijection between nilpotent $G_{\mathbb R}$ orbits in $\g_{\mathbb R}$ and nilpotent $G_0$-orbits 
in $\g_1$. Moreover, according to \cite{BS}, this bijection preserves the closure ordering. 
For each automorphism of order 2 of each exceptional complex Lie algebra,
Djokovi{\' c} gives the closure diagram for the nilpotent orbits. 
With the implementation of our method in {\sf GAP} we have also computed these diagrams.
The results of our computations were the same as those of Djokovi{\' c}, 
except in one case in type $E_8$. The difference is described in 
Section~\ref{symm}.  

The finite order automorphisms of $\g$ have been classified by Kac (\cite{kac_autom}), 
up to conjugacy. A conjugacy class of automorphisms is identified by its Kac diagram.
Here we briefly indicate how this works for inner automorphisms, for more information we 
refer to \cite[Chapter~3, \S3]{v41} and  \cite[Chapter X]{helgason}. Let $\Phi$  
be the root system of $\g$ with a basis
$\{\alpha_1,\ldots,\alpha_l\}$. 
Let $\alpha_0$ denote the lowest root of $\Phi$. The Dynkin
diagram of the roots $\alpha_0,\alpha_1,\ldots,\alpha_l$ is the extended Dynkin diagram of 
$\Phi$ (or of $\g$). Let $n_i\in \mathbb N$ be such that $\alpha_0 = -\sum_{i=0}^l n_i \alpha_i$ and 
set $n_0=1$. 
Take $l+1$ non-negative integers 
$s_0,\ldots,s_l$ 
with $\gcd(s_0,\ldots,s_l)=1$ and set $m=\sum_{i=0}^l n_is_i$. Let $\omega\in\C$ be a primitive $m$-th root of unity. 
Then a linear map $\theta:\g\to\g$ that multiplies vectors in the root space 
$\g_{\alpha_i}$   ($0\leq i\leq l$)  by  $\omega^{s_i}$
uniquely defines an automorphism of $\g$ of order $m$. 
The Kac diagram of this automorphism (or, more precisely, of its conjugacy class)
is the extended Dynkin diagram with labels $s_0,\ldots,s_l$. The automorphisms that will
appear in the examples in this paper all have the labels $s_i$ equal to $0$ or $1$. We will
give the Kac diagram of such an automorphism by colouring the nodes of the extended Dynkin
diagram: a black node means that the corresponding label is 1, otherwise it is 0.

There is also an easy way to read 
the $\theta$-representation from the Kac diagram of an inner $\theta$.
The group $G_0$ contains a maximal torus of $G$ and the semisimple 
part of $\gt g_0$ is generated by all root spaces $\gt g_{\alpha_i}$  ($0\leq i\leq l$) with $s_i=0$. The lowest weights of $\gt g_1$ (with 
respect to $G_0$) are in one-to-one correspondence 
with the roots labeled with $1$.

There are two instances of $\theta$ groups, one in $E_7$ and one in $E_8$, 
where $G_0$-orbits correspond to isomorphisms classes of two-step nilpotent (or metabelian)  Lie 
 algebras $\gt n$ such that $\gt n'=[\gt n,\gt n]$ is the centre of $\gt n$ and 
either $\dim(\gt n/\gt n')\leq 6,\dim\gt n'\leq 3$; or  $\dim(\gt n/\gt n')\leq 5,\dim\gt n'\leq 5$,
see Section~\ref{sub:2step} and \cite{gatim}. 
The nilpotent orbits correspond to those Lie algebras, whose structure tensor can be contracted to zero by a unimodular change of coordinates.  
Here taking closure 
of a nilpotent orbit can be interpreted as the degeneration of the encoded Lie algebra.
The Lie algebra structures on a given vector space form an affine algebraic variety and 
some of its properties depend on the degenerations, see e.g. \cite{vergne}.
In the Appendix we
present the Hasse diagrams for the nilpotent orbits of both these  $\theta$-representations.  

We have also computed the closures of the nilpotent orbits of $\SL_9(\C)$ 
in $\wedge^3(\C^9)$, see Figures~\ref{fig:3vec1}, \ref{fig:3vec2}. This is a $\theta$-representation treated 
in \cite{elashvin}.

Section~\ref{sec:rm} contains a few further observations on 
algebraic actions. We briefly discuss difficulties arising in developing a practical
algorithm for describing the closure (Section~\ref{sub:ag}); 
outline possible modifications in our algorithm; 
and present a  parametrisation for a set of the double cosets of a 
Weyl group  (Section~\ref{sub:dc}), 
which appeared as a byproduct of our constructions.

\section{Preliminaries}\label{sec:pre}

In this section we present some results, mainly taken from \cite{povin}, on which our method is based. 

Throughout we let $\h_0$ be a fixed Cartan subalgebra of $\g_0$. The Weyl group of
the root system of $\g_0$ with respect to $\h_0$ will be denoted $W_0$. We have
$W_0\cong N_{G_0}(\h_0)/Z_{G_0}(\h_0)$. Hence every  $w\in W_0$ can be lifted
to a $g\in G_0$ such that $g|_{\h_0} = w$. Usually we will denote these two elements
by the same symbol. The group $G$ is assumed to be simple unless explicitly stated 
to the contrary. 

We say that an $\ssl_2$-triple $(h,e,f)$ 
is {\em homogeneous} if $e\in \g_1$, $h\in \g_0$, $f\in \g_{-1}$.

Let us recall a few useful facts (see \cite{vinberg}, \cite{vinberg2}):

\begin{enumerate}
\item For a nilpotent element $e\in \g_1$ there exist $h\in \g_0$, $f\in \g_{-1}$ such that
$(h,e,f)$ is an $\ssl_2$-triple. The element $h$ is called a ({\em Dynkin}) {\em characteristic} of $e$.
\item Let $(h',e',f')$, $(h,e,f)$ be two homogeneous $\ssl_2$-triples. Then
$e'$ and $e$ are $G_0$-conjugate if and only if $(h',e',f')$, $(h,e,f)$ are 
$G_0$-conjugate, if and only if $h'$ and $h$ are $G_0$-conjugate.
\end{enumerate}
Thereby a nilpotent orbit $G_0e$ corresponds to a unique $G_0$-conjugacy class of homogeneous
$\ssl_2$-triples $(h,e,f)$. Also, we may assume that $h$ lies in $\h_0$. Furthermore, 
after possibly replacing $h$ by a $W_0$-conjugate, we may
assume that $h$ lies in a fixed Weyl chamber $C_0$ of $\h_0$. 
Then $h$ is uniquely determined by the orbit $G_0e$.

Throughout we will write $V$ for the space $\g_1$. Then for $h\in \h_0$  we set
$$V_k (h) = \{ v\in V \mid [h,v] = kv \},~~ V_{\geq k}(h) = \bigoplus_{l\geq k} V_l(h).$$
Also we consider the parabolic subalgebra $\p(h)\subset \g_0$, which is the sum of
the eigenspaces of $h$ with non-negative eigenvalues. Let $P(h)$ denote the connected
subgroup of $G_0$ with Lie algebra $\p(h)$. We let $\z(h)$ be the
centraliser of $h$ in $\g_0$. Let $\tilde{\z}(h)$ denote the orthogonal complement
of $h$ in $\z(h)$, with respect to the Killing form of $\g$. Let $Z(h)$ and $\widetilde{Z}(h)$ 
be connected subgroups of $G_0$ with Lie algebras $\z(h)$ and $\tilde{\z}(h)$, respectively.

Now we will borrow two theorems from \cite[Section~5]{povin}.

\begin{thm}[{\cite[Theorem~5.4.]{povin}}]\label{thm:1}
Let $e\in \g_1$ be nilpotent and nonzero. Let $h\in \h_0$ be such that $e \in V_{\geq 2}(h)$.
Then $h$ is a characteristic of $e$ if and only if the projection of $e$ on
$V_2(h)$ is not a nilpotent element with respect to the action of the group
$\widetilde{Z}(h)$.
\end{thm}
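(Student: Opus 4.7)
The proof relies on the Kempf--Hesselink characterisation of characteristics, and specifically on the fact (established in \cite{vinberg2} and implicit in \cite{povin}) that the Dynkin characteristic of a homogeneous $\ssl_2$-triple $(h,e,f)$ minimises $\|\tilde h\|^2$ over all rational $\tilde h\in\h_0$ with $e\in V_{\geq 2}(\tilde h)$, where $\|\cdot\|$ denotes a $W_0$-invariant Euclidean norm on $\h_0$. Both implications then come from the Hilbert--Mumford criterion together with a standard ``tilt'' argument.

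\textbf{Forward direction.} If $(h,e,f)$ is a homogeneous $\ssl_2$-triple then $[h,e]=2e$, so $e\in V_2(h)$ and is already its own projection. Suppose, for contradiction, that $e$ is $\widetilde{Z}(h)$-nilpotent. By Hilbert--Mumford applied to the reductive group $\widetilde{Z}(h)$ acting on $V_2(h)$, there is a non-trivial one-parameter subgroup $\mu\colon\C^*\to\widetilde{Z}(h)$ with $\lim_{t\to 0}\mu(t)e=0$; let $h'\in\tilde{\z}(h)$ be its generator (moved into $\h_0$ by a $Z(h)$-conjugation). Since $h'\in\tilde{\z}(h)$ one has $\langle h,h'\rangle=0$. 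Decompose $e=\sum_{k>0} e'_k$ under $\ad h'$ and let $k_0>0$ be the smallest occurring weight. For small $s\in\Q_{>0}$ put
$$h^*_s := \frac{2}{2+sk_0}\,(h+s h').$$
Then $[h^*_s,e'_k]=c_k\,e'_k$ with $c_k=\frac{2(2+sk)}{2+sk_0}\geq 2$, so $e\in V_{\geq 2}(h^*_s)$, while
$$\|h^*_s\|^2 = \frac{4(\|h\|^2+s^2\|h'\|^2)}{(2+sk_0)^2}= \|h\|^2 - sk_0\|h\|^2 + O(s^2),$$
which is strictly smaller than $\|h\|^2$ for small $s>0$. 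This contradicts the minimality of $\|h\|$.

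\textbf{Backward direction and main obstacle.} Suppose $e\in V_{\geq 2}(h)$ and $e_2=\pi_{V_2(h)}(e)$ is not $\widetilde{Z}(h)$-nilpotent; in particular $e_2\neq 0$. The plan is to show $h$ is itself the Kempf characteristic of $e$ and then invoke the Kempf--Dynkin identification to conclude. For any rational competitor $\tilde h\in\h_0$ with $e\in V_{\geq 2}(\tilde h)$ and $\|\tilde h\|<\|h\|$, decompose $\tilde h=a h+h''$ with $h''\perp h$, and read off the $\ad\tilde h$-weights on $e_2\in V_2(h)$; the tilt direction $h''$ (or a $W_0$-conjugate lying in $\h_0\cap h^\perp$) then gives a one-parameter subgroup of $\widetilde{Z}(h)$ that contracts $e_2$ to $0$, contradicting semistability. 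The delicate part of this converse is the bookkeeping: one must track $\ad\tilde h$-weights on the higher components $e_{>2}\in V_{>2}(h)$ and pass carefully between rational elements of $\h_0$ and one-parameter subgroups of $G_0$. A conceptually cleaner route is Hesselink's stratification, which describes the stratum of elements with characteristic $h$ as
$$G_0\cdot\bigl\{\,v\in V_{\geq 2}(h): \pi_{V_2(h)}(v)\text{ is }\widetilde{Z}(h)\text{-semistable}\,\bigr\};$$
from that viewpoint the theorem is the classical ``stratum equals semistable projection'' characterisation, and the main labour is in installing this framework in the $\theta$-graded setting.
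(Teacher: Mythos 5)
This statement is not proved in the paper at all: it is quoted verbatim from \cite[Theorem~5.4]{povin} (``Now we will borrow two theorems from [Section~5] of \cite{povin}''), and the only argument the authors give in its vicinity is Remark~\ref{two-char}, which establishes the much easier compatibility fact that a Dynkin characteristic is a characteristic in the sense of the theorem, via closedness of the orbit $\widetilde{Z}(h)e$. So your attempt has to stand on its own, and as it stands it has two genuine gaps. In the forward direction you silently upgrade the hypothesis ``$h$ is a characteristic and $e\in V_{\geq 2}(h)$'' to ``$(h,e,f)$ is a homogeneous $\ssl_2$-triple'', concluding $e\in V_2(h)$. That is not given: with the norm-minimality definition you announce in your Plan (which is the right one, since, as Remark~\ref{two-char} stresses, the ``characteristic'' of \cite{povin} is not a priori a Dynkin characteristic), $e$ may have components in $V_{>2}(h)$, and on those $\ad h'$ can act with \emph{negative} weights, so the decomposition $e=\sum_{k>0}e'_k$ and the eigenvalue bound $c_k\geq 2$ fail as written. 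The repair is standard --- for a component in $V_j(h)$ with $j>2$ the slack $j-2\geq 1$ absorbs the tilt for $s$ small, so $e\in V_{\geq 2}(h^*_s)$ still holds --- but it must be carried out, and one should also record that conjugating the destabilizing one-parameter subgroup into $\h_0$ replaces $e$ by $ge$ for some $g\in Z(h)$ (harmless, since $g$ preserves $V_{\geq 2}(h)$ and the norm, but it changes the vector being tilted).

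The backward direction, which is the substantive half, is not proved: your own text defers it to ``Hesselink's stratification'' and concedes that ``the main labour is in installing this framework.'' Concretely, your tilt sketch only works when the competitor $\tilde h$ already lies in $\h_0$, so that $h''=\tilde h-ah$ falls into $\tilde{\z}(h)$ and generates a one-parameter subgroup of $\widetilde{Z}(h)$ (and then indeed $a<1$ forces all $h''$-weights on $e_2$ to be positive, destabilizing $e_2$). But the minimization defining the characteristic runs over \emph{all} rational cocharacters of $G_0$, which need not commute with $h$; reducing an arbitrary competitor to one in $\h_0$ is precisely where Kempf's uniqueness theorem, the parabolic $P(h)$, or the convexity of the weight polytope must enter, and that reduction is the actual content of the theorem. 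Until that step is supplied, the ``if'' implication --- the one the paper actually uses, via Corollary~\ref{cor:thm2} and Proposition~\ref{prop:3}, to certify that a random $u\in U$ lies in $\mathcal{O}'$ --- remains unestablished.
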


\begin{rem}\label{two-char}
In \cite[Section~5]{povin}, the term ``characteristic'' is used in a 
different sense, it is not necessarily a Dynkin characteristic.
However, following the lines of Example~3 in \cite[Section~5.5]{povin},
one can show that the orbit $\tilde Z(h)e$ is closed in $V$, if 
$h$ is a Dynkin characteristic of $e$. Therefore a Dynking characteristic 
of $e$ is also a characteristic in the sense of Theorem~\ref{thm:1}.
\end{rem}

The next theorem is the second part of \cite[Theorem 5.6.]{povin}
and Corollary~\ref{cor:thm2} is an
immediate consequence of Theorem~\ref{thm:2}.

\begin{thm}\label{thm:2}
Let $\mathcal{O}=G_0e$ be a nilpotent orbit in $V$; and let $h\in \g_0$ be a 
characteristic of $e$.
Then $\overline{\mathcal{O}} = G_0 \left( V_{\geq 2}(h)\right)$.
\end{thm}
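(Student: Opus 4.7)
The plan is to prove the equality $\overline{\mathcal{O}}=G_0(V_{\geq 2}(h))$ by verifying both opposite inclusions.

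For $\overline{\mathcal{O}}\subseteq G_0(V_{\geq 2}(h))$ I would first observe that $V_{\geq 2}(h)$ is stable under the parabolic $P(h)\subseteq G_0$, since $[\p(h),V_{\geq 2}(h)]\subseteq V_{\geq 2}(h)$ follows from the $h$-eigenvalue grading on $V$. Hence the incidence variety
\[
Z=\{(w,gP(h))\in V\times G_0/P(h):g^{-1}w\in V_{\geq 2}(h)\}
\]
is closed, and its image under the first projection is $G_0(V_{\geq 2}(h))$. Because $G_0/P(h)$ is projective, this projection is proper, so $G_0(V_{\geq 2}(h))$ is closed in $V$. Since $h$ being a characteristic of $e$ forces $e\in V_{\geq 2}(h)$, the orbit $G_0e$ is contained in this closed set, and taking closures gives the inclusion.

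For the reverse inclusion it suffices to show $V_{\geq 2}(h)\subseteq\overline{G_0 e}$. Let $M(h)\subseteq V_{\geq 2}(h)$ be the set of $v$ whose projection on $V_2(h)$ is not $\widetilde{Z}(h)$-nilpotent; by Theorem~\ref{thm:1} this is exactly the set of $v\in V_{\geq 2}(h)$ having $h$ as a characteristic. As noted in the introduction, $G_0 e$ coincides with the Hesselink stratum $G_0\cdot M(h)$, and in particular $M(h)\subseteq G_0 e$. Moreover $M(h)$ is Zariski open in $V_{\geq 2}(h)$—being the preimage, under the continuous projection $V_{\geq 2}(h)\to V_2(h)$, of the open complement of the $\widetilde{Z}(h)$-nullcone—and nonempty since $e\in M(h)$; hence $\overline{M(h)}=V_{\geq 2}(h)$. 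Combining these,
\[
V_{\geq 2}(h)=\overline{M(h)}\subseteq\overline{G_0 e}=\overline{\mathcal{O}},
\]
and applying $G_0$ on both sides yields $G_0(V_{\geq 2}(h))\subseteq\overline{\mathcal{O}}$.

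The step I expect to require the most care is the identification of $G_0 e$ with the Hesselink stratum $G_0\cdot M(h)$, equivalently the statement that $h$—in the Theorem~\ref{thm:1} sense—determines the nilpotent $G_0$-orbit uniquely. Fact~(2) of Section~\ref{sec:pre} records this for Dynkin characteristics via the uniqueness of homogeneous $\ssl_2$-triples up to $G_0$-conjugacy, but Remark~\ref{two-char} stresses that a characteristic in Theorem~\ref{thm:1}'s sense need not be Dynkin. The natural route to the general case is via the Levi decomposition $P(h)=Z(h)\ltimes U(h)$: the unipotent radical $U(h)$ conjugates $v\in M(h)$ so as to modify only its higher-weight components in $V_{>2}(h)$, reducing the claim to the analogous single-orbit statement for the smaller $\theta$-representation of $Z(h)$ on $V_2(h)$. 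Once this is in hand, the closure argument concludes as above.
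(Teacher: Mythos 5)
The paper itself offers no proof of this theorem: it is imported verbatim as the second part of \cite[Theorem~5.6]{povin}, so there is no internal argument to compare yours against. On its own terms, your first inclusion is complete and correct: $V_{\geq 2}(h)$ is $P(h)$-stable, the incidence variety over the projective variety $G_0/P(h)$ makes $G_0\left(V_{\geq 2}(h)\right)$ a closed subset of $V$, and $e\in V_{\geq 2}(h)$ then gives $\overline{\mathcal{O}}\subseteq G_0\left(V_{\geq 2}(h)\right)$. Your openness-and-density argument for $M(h)$ in the reverse inclusion is also fine (the $\widetilde{Z}(h)$-nullcone of $V_2(h)$ is closed since $\widetilde{Z}(h)$ is reductive, and $e\in M(h)$ by Theorem~\ref{thm:1}).

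The gap is exactly where you flag it, and it is more serious than a routine verification: the claim $M(h)\subseteq G_0e$ \emph{is} the first part of the very theorem of \cite{povin} being quoted (it is what the Introduction means by ``$G_0e$ coincides with a Hesselink stratum''), so as written you are deducing the second half of the citation from its first half rather than supplying a proof. Your sketched repair via $P(h)=Z(h)\ltimes N$ is the right idea but needs two separate ingredients that are not automatic. First, for $v=v_2+v_{>2}\in M(h)$ you must put $v_2$ into the orbit $Z(h)e$ itself, not merely outside the nullcone; in this paper that equivalence is Proposition~\ref{split}, proved via Theorem~\ref{char-reduced} (alternatively: $[\z(h),e]=V_2(h)$ by $\ssl_2$-theory, so $Z(h)e$ is the unique dense orbit, and every orbit in its complement is conical, hence $\widetilde{Z}(h)$-nilpotent). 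Second, to absorb the tail you need $Ne=e+V_{>2}(h)$, which requires the surjectivity of $\ad e$ from the $i$-eigenspace of $\ad h$ in $\g_0$ onto $V_{i+2}(h)$ for $i>0$ (again $\ssl_2$-theory) together with closedness of unipotent orbits; it is not enough to say that $N$ ``modifies only the higher-weight components''. A further caution: Lemma~\ref{lem:0}, which you would naturally invoke for the density of $Z(h)e$ in $V_2(h)$, is derived in the paper from Theorem~\ref{thm:2}, so to avoid circularity you must establish it independently as above.
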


\begin{cor}\label{cor:thm2}
Let $\mathcal{O}' = G_0e'$, $\mathcal{O}=G_0e$ be two nilpotent orbits in 
$V$. Let $h'$, $h$ be Dynkin characteristics of $e'$, $e$, respectively. 
Then $\mathcal{O}' \subset 
\overline{\mathcal{O}}$ if and only if $V_{\geq 2}(h)$ contains a point of 
$\mathcal{O}'$.
\end{cor}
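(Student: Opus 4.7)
The corollary is essentially an immediate translation of Theorem~\ref{thm:2} into a statement about orbit intersections, so the plan is short. Theorem~\ref{thm:2} gives $\overline{\mathcal{O}} = G_0(V_{\geq 2}(h))$, which is a $G_0$-stable subset of $V$. My observation is that any $G_0$-orbit $\mathcal{O}'$ is contained in a $G_0$-stable subset $S \subseteq V$ if and only if $\mathcal{O}' \cap S \neq \emptyset$: the ``only if'' direction is trivial, and the ``if'' direction follows because a single point of $\mathcal{O}' \cap S$ generates all of $\mathcal{O}'$ under the $G_0$-action, which preserves $S$.

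Applying this with $S = V_{\geq 2}(h)$, I would deduce $\mathcal{O}' \subset G_0(V_{\geq 2}(h))$ if and only if $\mathcal{O}' \cap V_{\geq 2}(h) \neq \emptyset$. For the forward direction I would pick $e' \in \mathcal{O}' \subset G_0(V_{\geq 2}(h))$ and write $e' = gv$ with $g \in G_0$, $v \in V_{\geq 2}(h)$; then $v = g^{-1}e' \in \mathcal{O}' \cap V_{\geq 2}(h)$. Conversely, if $v \in \mathcal{O}' \cap V_{\geq 2}(h)$, then $\mathcal{O}' = G_0 v \subset G_0(V_{\geq 2}(h)) = \overline{\mathcal{O}}$. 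Combined with Theorem~\ref{thm:2}, this proves the corollary.

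There is no real obstacle; the only subtle point worth flagging is that the hypothesis that $h'$ is a characteristic of $e'$ plays no logical role in the proof itself. Its inclusion in the statement is presumably motivated by the subsequent algorithmic use, where $h'$ will be exploited (together with Theorem~\ref{thm:1}) to certify membership in, or disjointness from, $V_{\geq 2}(h)$.
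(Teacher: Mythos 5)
Your argument is correct and is exactly what the paper intends when it calls the corollary an ``immediate consequence'' of Theorem~\ref{thm:2}: the set $G_0\left(V_{\geq 2}(h)\right)$ is $G_0$-stable, so the orbit $\mathcal{O}'$ lies in it if and only if it meets $V_{\geq 2}(h)$. Your remark that $h'$ plays no logical role here and is carried along only for later use is also accurate.
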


We use the notation $\g_{i,x}$ for the intersection of $\g_i$ and the centraliser
$\gt g_x\subset\gt g$ of $x\in \g$. Next we have two lemmas that we will use in the sequel.
The first one is an immediate consequence of Theorem~\ref{thm:2}.

\begin{lem}\label{lem:0}
Let $(h,e,f)$ be a homogeneous $\ssl_2$-triple. Then $Z(h)e$ is dense
in $V_2(h)$.
\end{lem}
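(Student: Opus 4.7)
The plan is to reduce the statement to the infinitesimal equality $[\z(h), e] = V_2(h)$. Since $Z(h)$ commutes with $h$, it preserves every $\ad(h)$-eigenspace of $V = \g_1$, so $Z(h) e \subseteq V_2(h)$; the tangent space of the orbit $Z(h) e$ at $e$ is $[\z(h), e]$, so the desired equality forces this irreducible orbit to have the full dimension of the irreducible variety $V_2(h)$, making it dense in $V_2(h)$.

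The inclusion $[\z(h), e] \subseteq V_2(h)$ is immediate from $[h, [x, e]] = 2[x, e]$ for $x \in \z(h)$. For the reverse inclusion, I would invoke Theorem~\ref{thm:2}: the equality $\overline{G_0 e} = G_0 \cdot V_{\geq 2}(h)$ yields in particular $V_{\geq 2}(h) \subseteq \overline{G_0 e}$. Since the orbit $G_0 e$ is open in its closure, and is smooth at $e$ as a homogeneous space, we have $T_e(\overline{G_0 e}) = T_e(G_0 e) = [\g_0, e]$. The linear subspace $V_{\geq 2}(h) \subseteq V$ contains $e$, so its tangent space at $e$ is itself, giving $V_{\geq 2}(h) \subseteq [\g_0, e]$ and in particular $V_2(h) \subseteq [\g_0, e]$.

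To extract the $V_2(h)$-part of $[\g_0, e]$, decompose any $x \in \g_0$ as $x = \sum_k x_k$ with $x_k$ in the $k$-eigenspace of $\ad(h)$. Since $\ad(e)$ raises $\ad(h)$-weight by $2$, one has $[x_k, e] \in V_{k+2}(h)$. Writing $v \in V_2(h) \subseteq [\g_0, e]$ as $v = [x, e] = \sum_k [x_k, e]$ and using the direct-sum decomposition $V = \bigoplus_j V_j(h)$ forces $[x_k, e] = 0$ for all $k \neq 0$; since $x_0 \in \z(h)$, this gives $v = [x_0, e] \in [\z(h), e]$, completing the proof. The heavy lifting is done by Theorem~\ref{thm:2}; the only delicate point is the weight-space bookkeeping at the end, which is routine once the tangent-space inclusion has been secured.
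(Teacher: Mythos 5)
Your argument is correct and follows exactly the route the paper indicates: the paper offers no written proof beyond declaring the lemma an immediate consequence of Theorem~\ref{thm:2}, and your tangent-space computation ($V_{\geq 2}(h)\subseteq T_e\overline{G_0e}=[\g_0,e]$, followed by the $\ad(h)$-weight bookkeeping that extracts $[\z(h),e]=V_2(h)$) is a clean way of supplying the details the authors omit. No gaps.
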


\begin{lem}\label{lem:1}
Let $(h,e,f)$ be a homogeneous $\ssl_2$-triple, and $\mathcal{O}=G_0e$. Then $h$ is a (Dynkin) characteristic
of all elements of $\mathcal{O}\cap V_2(h)$.
\end{lem}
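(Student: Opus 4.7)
The plan is to deduce the lemma from Theorem~\ref{thm:1} applied to $x$ itself: since $x\in V_2(h)\subset V_{\ge 2}(h)$ and its projection onto $V_2(h)$ is $x$, it suffices to show that $x$ is not $\wt Z(h)$-nilpotent.

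I would do this via the Kempf--Hesselink interpretation of $h$ recalled in the introduction and in Remark~\ref{two-char}. Fix a $G_0$-invariant positive definite inner product on the rational cocharacters of $G_0$, with associated norm $\|\cdot\|$. The 1-PS $\lambda_h$ generated by $h$ is an optimally destabilising 1-PS for $e$, realising the maximal Kempf number $\mu^{\max}(e)=2/\|\lambda_h\|$; since $\mu^{\max}$ is $G_0$-invariant and $x\in\mathcal{O}$, we also have $\mu^{\max}(x)=2/\|\lambda_h\|$, a value already attained on $x$ by $\lambda_h$ itself because $x\in V_2(h)$.

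Suppose, for contradiction, that $x$ is $\wt Z(h)$-nilpotent, and pick a nontrivial cocharacter $\mu\colon \C^\times\to\wt Z(h)$ with $\lim_{t\to0}\mu(t)x=0$. Because $\wt Z(h)\subseteq Z(h)$ preserves the $h$-weight spaces, every summand of the $\mu$-weight decomposition $x=\sum_k x_k$ lies in $V_2(h)$, and the smallest nonzero $\mu$-weight $k_0$ of $x$ is strictly positive. Moreover, the definition of $\wt{\z}(h)$ as the Killing-orthogonal complement of $h$ in $\z(h)$ makes the cocharacter $\mu$ commute with $\lambda_h$ and be orthogonal to it. Hence for positive integers $p,q$ the 1-PS $\nu_{p,q}(t):=\lambda_h(t^q)\mu(t^p)$ is well-defined, and a direct computation gives $\nu_{p,q}(t)x=\sum_k t^{2q+pk}x_k$ and $\|\nu_{p,q}\|^2=q^2\|\lambda_h\|^2+p^2\|\mu\|^2$, so with $s:=p/q>0$ the Kempf number of $\nu_{p,q}$ on $x$ equals
\[
\frac{2+sk_0}{\sqrt{\|\lambda_h\|^2+s^2\|\mu\|^2}}.
\]
This expression equals $2/\|\lambda_h\|$ at $s=0$ and has positive derivative $k_0/\|\lambda_h\|$ there; hence it strictly exceeds $\mu^{\max}(x)$ for some small rational $s>0$, a contradiction. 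Therefore $x$ is not $\wt Z(h)$-nilpotent, and Theorem~\ref{thm:1} yields the conclusion.

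The main obstacle is bridging the Dynkin notion of characteristic, used in the lemma's statement, with the Kempf--Hesselink one, used in the numerical estimate. This equivalence is exactly the content of Remark~\ref{two-char} together with the results of \cite{povin} invoked there; once it is in force, the derivative computation above is routine.
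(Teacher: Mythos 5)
Your Kempf--Hesselink computation is sound as far as it goes: the orthogonal perturbation $\nu_{p,q}$ does show that if $x\in\mathcal{O}\cap V_2(h)$ were $\widetilde{Z}(h)$-nilpotent then $\mu^{\max}(x)>2/\|\lambda_h\|$, contradicting $\mu^{\max}(x)=\mu^{\max}(e)=2/\|\lambda_h\|$. But the conclusion you can then draw from Theorem~\ref{thm:1} is only that $h$ is a characteristic of $x$ \emph{in the sense of that theorem}, i.e.\ in the Kempf--Hesselink sense of \cite{povin}, whereas the lemma asserts that $h$ is a \emph{Dynkin} characteristic, i.e.\ that it fits into a homogeneous $\ssl_2$-triple $(h,x,f'')$. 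You say this bridge ``is exactly the content of Remark~\ref{two-char}'', but that remark establishes only the implication you do not need (Dynkin characteristic $\Rightarrow$ Kempf--Hesselink characteristic). To go the other way for your specific pair $(h,x)$ you would have to add: $x$ admits \emph{some} Dynkin characteristic $h''$; by the remark $h''$ is also a Kempf--Hesselink characteristic of $x$; and any two Kempf--Hesselink characteristics of $x$ are conjugate under the stabiliser $G_{0,x}$, so that $h=\mathrm{Ad}(g)h''$ with $g\in G_{0,x}$ and $(h,x,gf'')$ is the required homogeneous triple. The conjugacy statement is a genuine extra ingredient (Kempf's uniqueness theorem, or the corresponding statement in \cite[Section~5]{povin}); it is neither stated in the paper nor supplied in your argument, so as written the proof stops one step short of the assertion.

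For comparison, the paper avoids GIT optimality entirely. It proves the stronger statement that $\mathcal{O}\cap V_2(h)$ is the single orbit $Z(h)e$: for $y\in\mathcal{O}\cap V_2(h)$ one grades the centraliser $\g_{0,y}$ by $h$-eigenvalues, uses semicontinuity along the degeneration $y\in\overline{Z(h)e}$ (Lemma~\ref{lem:0}) to get $\dim\mathfrak f_{i,y}\geq\dim\mathfrak f_{i,e}$ in each graded piece, and combines this with $\dim\g_{0,y}=\dim\g_{0,e}$ to force $\dim Z(h)y=\dim Z(h)e$, whence $y\in Z(h)e$. Since $Z(h)$ fixes $h$, the characteristic claim is then immediate, and the stronger conclusion is what is actually exploited later (for instance in the proof of Proposition~\ref{prop:4}). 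If you wish to keep your route, the cleanest repair is to quote the conjugacy of characteristics explicitly; otherwise the elementary dimension count is both shorter and yields more.
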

\begin{proof}
In fact, we are going to prove that 
$Y=\mathcal{O}\cap V_2(h)$ is a single $Z(h)$-orbit, i.e., this intersection is equal 
to $Z(h)e$. Let $y$ be an element of $Y$. 
Let $\mathfrak f_i$ be the eigenspace of $h$ in $\g_0$ with eigenvalue $i$ and
let $\mathfrak f_{i,y}=\mathfrak f_i\cap \g_{0,y}$ 
be the centraliser of $y$ in $\mathfrak f_i$.  
Since $Z(h)e$ is dense in $V_2(h)$, the element $y$ lies in 
its closure. 
In particular, taking the limits one sees that
$\dim\mathfrak f_{i,y}\ge \dim\mathfrak f_{i,e}$ for all $i$. 
On the other hand $\dim\g_{0,e}=\dim\g_{0,y}$, since these are 
the elements of the same $G_0$-orbit. 
Taking into account that $\g_{0,y}=\bigoplus \mathfrak f_{i,y}$
and that $\mathfrak f_{0,y}=\z(h)_y$, 
we conclude that the $Z(h)$-orbits of $e$ and $y$ have the same dimension. 
Since $Z(h)e$ is the unique $Z(h)$-orbit of the maximal dimension,
$y\in Z(h)e$.  

Now the statement about characteristics is obvious. 
\end{proof}

\vskip0.5ex

\subsection{Reduced $\theta$-groups.}\label{reduced} 

We conclude Section~\ref{sec:pre} with a few statements concerning $\theta$-groups 
appearing from $\Z$-gradings.
In this part of the paper, $G$ is an arbitrary 
(not necessarily simple) reductive group.   
A $\Z$-grading of $\gt g=\Lie G$ is defined by 
a diagonalisable one-parameter subgroup of $\Aut(\gt g)$ 
and therefore by the eigenvalues of some $h\in\gt g$, i.e., 
$\gt g_s=\{\xi\in\gt g\mid [h,\xi]=s\xi\}$, see e.g. \cite[Ch.\,3,\,Sec.\,3.3]{v41}. 
Without loss of generality we may assume that
$h\in [\gt g,\gt g]$.
Here all elements of $\gt g_1$ are nilpotent and therefore there is a dense open $G_0$-orbit in $\gt g_1$. 

Let $\rho: G\to \GL(W)$ be a faithful linear representation of $G$ on a finite-dimensional vector space
$W$. 
We use the same letter $\rho$ for its differential 
$\rho:\gt g\to\gt{gl}(W)$ and define
%
a non-degenerate $G$-invariant symmetric scalar product 
$(\,\,,\,)$ on $\gt g$ by setting 
$(x,y):={\rm tr}\,(\rho(x)\rho(y))$ for $x,y\in\gt g$. 
Note that the 
restriction of $(\,\,,\,)$ to each non-abelian 
simple factor of $\gt g$ is the Killing form multiplied by a
positive rational number. One of the benefits of this choice is that $(h,h)>0$,
whenever $h\ne 0$, and this is assumed to be the case.  
More generally, $(s,s)>0$ for all non-zero $s\in[\gt g,\gt g]$ 
that have rational eigenvalues on $\gt g$. 

Let $\tilde{\gt g}_0\subset\gt g_0$ be the orthogonal complement 
of $h$ with respect to $(\,\,,\,)$ and $\tilde G_0\subset G_0$ 
a connected algebraic group with $\Lie\tilde G_0=\tilde{\gt g}_0$.
Then the action of $\tilde G_0$ on $\gt g_1$ is said to be 
a {\it reduced} $\theta$-representation and $\tilde G_0$ a {\it reduced}
$\theta$-group. Note that $G_0=\tilde G_0(\exp(\C h))$.   

\begin{lem}\label{conic}
Let $x\in\gt g_1$. Then $\tilde G_0x=G_0x$ if and only if 
$[\tilde{\gt g}_0,x]=[\gt g_0,x]$, and the equality takes place if and 
only if the orbit $\tilde G_0x$ is conical.  
\end{lem}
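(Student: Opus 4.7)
The plan is to exploit the decomposition $G_0 = \tilde G_0 \exp(\C h)$ to write the $G_0$-orbit as a $\C^\ast$-thickening of the $\tilde G_0$-orbit, and then play dimension counts against conicity.

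First I would compute how the one-parameter subgroup $\exp(\C h)$ acts on $x$. Since $x \in \gt g_1$, we have $[h,x] = x$, so $\mathrm{Ad}(\exp(sh))(x) = e^s x$. Consequently $\exp(\C h) \cdot x = \C^\ast x$, and combining this with $G_0 = \tilde G_0 \exp(\C h)$ (and the fact that $\tilde G_0$ acts linearly, hence commutes with scalar multiplication) yields the identity
\[
G_0 x \;=\; \C^\ast \cdot \tilde G_0 x.
\]
This single identity is the engine of the whole proof. From it the second equivalence is immediate: $\tilde G_0 x = G_0 x$ iff $\tilde G_0 x$ is stable under $\C^\ast$, which is by definition what it means for the orbit to be conical.

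For the first equivalence I would argue by tangent spaces. The tangent spaces to $\tilde G_0 x$ and $G_0 x$ at $x$ are $[\tilde{\gt g}_0, x]$ and $[\gt g_0,x]$ respectively, so equality of orbits forces equality of brackets. The reverse direction is where the nontrivial content sits, and this is the step I expect to be the main obstacle: I cannot directly promote an equality of tangent spaces to an equality of orbits, since in general it only gives that $\tilde G_0 x$ is open in its closure in $G_0 x$.

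To close that gap I would use the dimension dichotomy for the surjection $\C^\ast \times \tilde G_0 x \to \C^\ast \cdot \tilde G_0 x$: the scalar $\mathbb C^\ast$-stabiliser of $\tilde G_0 x$ is a closed subgroup of $\C^\ast$, hence either all of $\C^\ast$ (the conical case, with generic fiber $1$-dimensional) or finite (with $0$-dimensional fibers). Therefore
\[
\dim G_0 x \;=\; \dim \C^\ast \tilde G_0 x \;=\; \dim \tilde G_0 x \,+\, \epsilon, \qquad \epsilon \in \{0,1\},
\]
with $\epsilon = 0$ precisely when $\tilde G_0 x$ is conical. Assuming $[\tilde{\gt g}_0,x] = [\gt g_0,x]$ forces $\dim \tilde G_0 x = \dim G_0 x$, hence $\epsilon = 0$, hence $\tilde G_0 x$ is conical, and combining with the first paragraph gives $\tilde G_0 x = G_0 x$. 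This simultaneously finishes the first equivalence and reconfirms the second.
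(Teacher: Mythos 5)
Your proof is correct. Its engine --- the identity $G_0x=\C^{\times}\cdot\tilde G_0x$, obtained from $G_0=\tilde G_0\exp(\C h)$ together with $\mathrm{Ad}(\exp(sh))|_{\g_1}=e^s\,\mathrm{id}$ --- differs from what the paper does at both nontrivial points, though the two arguments are close cousins. For ``equal tangent spaces $\Rightarrow$ equal orbits'' the paper invokes normality of $\tilde G_0$ in $G_0$: the $G_0$-orbit is a disjoint union of $\tilde G_0$-orbits all of the same dimension, so once that common dimension equals $\dim G_0x$ each of them is open in the irreducible set $G_0x$ and there can be only one. Your identity is this normality argument made explicit (the $\tilde G_0$-orbits inside $G_0x$ are precisely the translates $\lambda\,\tilde G_0x$), with the openness/connectedness step replaced by a fibre-dimension count for the map $\C^{\times}\times\tilde G_0x\to G_0x$. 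For the conical equivalence the paper stays infinitesimal: since $[\gt g_0,x]=[\tilde{\gt g}_0,x]+\C x$, tangent-space equality amounts to $\C x\subset[\tilde{\gt g}_0,x]$, which is their reformulation of conicity; you argue at the group level instead, which is equally valid and saves you from having to pass between the orbit-level and tangent-level descriptions of ``conical''. The one step you should not leave implicit is the closedness of $S=\{\lambda\in\C^{\times}:\lambda x\in\tilde G_0x\}$: it is a constructible subgroup of $\C^{\times}$ (the preimage of the locally closed orbit $\tilde G_0x$ under $\lambda\mapsto\lambda x$), and a constructible subgroup of an algebraic group is automatically closed --- standard, but worth a sentence, since the finite-versus-all-of-$\C^{\times}$ dichotomy is exactly what makes your $\epsilon\in\{0,1\}$ count work.
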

\begin{proof}
If $\tilde G_0x=G_0x$, then clearly $[\tilde{\gt g}_0,x]=[\gt g_0,x]$. 
Other way around, the equality of tangent spaces implies that 
$\dim G_0x=\dim\tilde G_0x$. Since $\tilde G_0$ is a normal subgroup of $G_0$, the same holds for all elements in $G_0x$ and the two orbits coincide. 
Finally, being conical means that $\C^{^\times}\!\!x\subset \tilde G_0x$, or, equivalently, 
$\C x\subset [\tilde{\gt g}_0,x]$. Therefore  $\tilde G_0x$ is a conical orbit if and only if  
there is the equality of orbits or their tangent spaces. 
%
\end{proof}

\begin{lem}\label{D-char}
Suppose that $2h$ is a Dynkin characteristic of $x\in\gt g_1$. Then 
$[\gt g_0,x]=\gt g_1$, but $[\gt{\tilde g}_0,x]\ne \gt g_1$. 
\end{lem}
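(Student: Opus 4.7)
The plan is to exploit the $\gt{sl}_2$-triple whose existence is guaranteed by the hypothesis that $2h$ is a Dynkin characteristic of $x$. By definition, there exists $y \in \gt g$ such that $(2h, x, y)$ is an $\gt{sl}_2$-triple, and from $[2h, y] = -2y$ one obtains $[h, y] = -y$, so $y \in \gt g_{-1}$. I then view $\gt g$ as an $\gt{sl}_2$-module via $\ad$. Since $H := 2h$ is the Cartan element of the triple, the $H$-weight $2s$ part of $\gt g$ is precisely $\gt g_s$; in particular $\gt g_0$ is the zero-weight space and $\gt g_1$ is the weight-two space.

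To prove $[\gt g_0, x] = \gt g_1$, I decompose $\gt g$ into irreducible $\gt{sl}_2$-submodules. Only the even highest-weight components contribute to $\gt g_0$ or $\gt g_1$. For an irreducible component $V(n)$ of highest weight $n \geq 2$ even, the map $\ad x \colon V(n)_0 \to V(n)_2$ is an isomorphism of one-dimensional spaces; for $n = 0$ it is the zero map into a zero target. Summing over components yields surjectivity of $\ad x \colon \gt g_0 \to \gt g_1$.

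For the second assertion, I first show $\gt g_{0, x} \subseteq \tilde{\gt g}_0$. The $\ad$-invariance of the bilinear form $(\,,\,)$ gives, for any $u \in \gt g_{0, x}$,
\[
(u, h) \;=\; \tfrac{1}{2}(u, [x, y]) \;=\; \tfrac{1}{2}([u, x], y) \;=\; 0,
\]
so $u \in h^\perp \cap \gt g_0 = \tilde{\gt g}_0$. Consequently $\tilde{\gt g}_0 \cap \gt g_{0, x} = \gt g_{0, x}$, and using $\dim \tilde{\gt g}_0 = \dim \gt g_0 - 1$ one obtains
\[
\dim [\tilde{\gt g}_0, x] \;=\; \dim \tilde{\gt g}_0 - \dim \gt g_{0, x} \;=\; \dim [\gt g_0, x] - 1 \;=\; \dim \gt g_1 - 1,
\]
whence $[\tilde{\gt g}_0, x] \subsetneq \gt g_1$. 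I do not anticipate a major obstacle beyond carefully tracking weight conventions (in particular, that $\gt g_s$ corresponds to $H$-weight $2s$) and deploying invariance of the form at the right spot to eliminate the $\C h$-direction from $\gt g_{0,x}$.
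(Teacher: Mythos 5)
Your proof is correct, but it takes a different route from the paper's in both halves. For the surjectivity $[\gt g_0,x]=\gt g_1$, the paper argues by duality: if the image were proper, non-degeneracy of the pairing between $\gt g_1$ and $\gt g_{-1}$ would produce a nonzero $v\in\gt g_{-1}$ with $[x,v]=0$, contradicting injectivity of $\ad x$ on negative weight spaces; you instead decompose $\gt g$ into irreducible $\gt{sl}_2$-modules and read off surjectivity of $\ad x\colon \gt g_0\to\gt g_1$ directly, which is equally valid and arguably more self-contained (the paper's version leans on the same $\gt{sl}_2$ facts anyway, just dualized). For the second assertion, the paper exhibits an explicit nonzero covector annihilating $[\tilde{\gt g}_0,x]$, namely $(\,\cdot\,,y)$, using $([u,x],y)=(u,[x,y])=2(u,h)=0$ for $u\in\tilde{\gt g}_0$; you run the same invariance computation in the opposite direction to show $\gt g_{0,x}\subseteq\tilde{\gt g}_0$ and then conclude by a dimension count, using $\dim\tilde{\gt g}_0=\dim\gt g_0-1$ (which is legitimate here since the section's standing hypothesis guarantees $(h,h)>0$, so $\gt g_0=\C h\oplus\tilde{\gt g}_0$). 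The two arguments are dual to one another: the paper's gives the codimension-one statement as ``the image lies in the hyperplane $y^{\perp}$,'' while yours gives it as ``the kernel does not grow when passing from $\gt g_0$ to $\tilde{\gt g}_0$''; your version additionally yields the precise equality $\dim[\tilde{\gt g}_0,x]=\dim\gt g_1-1$.
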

\begin{proof}
If $[\gt g_0,x]\ne \gt g_1$, then there is $v\in\gt g_{-1}$ such that 
$([\gt g_0,x],v)=0$ and also $(\gt g_0,[x,v])$, where $[x,v]\in\gt g_0$.
Since the scalar product is non-degenerate on $\gt g_0$, we obtain $[x,v]=0$, 
which contradicts the $\gt{sl}_2$-theory. 

There is an element $y\in\gt g_{-1}$ such that $y$, $2h$, and $x$ form an 
$\gt{sl}_2$-triple. For this $y$ we have 
$(y,[\tilde{\gt g}_0,x])=0$, because $(2h,\tilde{\gt g}_0)=0$.
The inequality follows. 
\end{proof}

\begin{thm}\label{char-reduced} 
Let $G$ be an arbitrary reductive group and the objects 
$\gt g_1$, $G_0$, $\tilde G_0$ as above.  
Suppose that $x\in\gt g_1$. 
Then $\tilde G_0x\ne G_0x$ if and only if 
$2h$ is a Dynkin characteristic of $x$. 
\end{thm}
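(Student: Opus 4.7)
The plan is to prove the two directions of the equivalence separately. For ($\Leftarrow$), I would just combine the two preceding lemmas: if $2h$ is a Dynkin characteristic of $x$, then Lemma~\ref{D-char} gives $[\gt g_0, x] = \gt g_1$ while $[\tilde{\gt g}_0, x] \ne \gt g_1$, hence $[\tilde{\gt g}_0, x] \ne [\gt g_0, x]$, and Lemma~\ref{conic} delivers $\tilde G_0 x \ne G_0 x$.

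For the converse, assume $\tilde G_0 x \ne G_0 x$. My strategy is to first convert this into the orthogonality statement $\gt g_{0,x} \subseteq \tilde{\gt g}_0$ (equivalently, $h \perp \gt g_{0,x}$), and then produce $y \in \gt g_{-1}$ with $[x, y] = 2h$ by using the invariant form to identify $[x, \gt g_{-1}]$ as an explicit orthogonal complement. For the first step: by Lemma~\ref{conic}, $[\tilde{\gt g}_0, x] \subsetneq [\gt g_0, x]$; since $\gt g_0 = \tilde{\gt g}_0 \oplus \C h$ and $[h, x] = x$, this reads $x \notin [\tilde{\gt g}_0, x]$. I would rewrite the equation $[\xi, x] = x$ as $h - \xi \in \gt g_{0,x}$, which turns the condition into $h \notin \tilde{\gt g}_0 + \gt g_{0,x}$. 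Since $\tilde{\gt g}_0$ is a hyperplane in $\gt g_0$ not containing $h$ (because $(h, h) > 0$), the subspace $\tilde{\gt g}_0 + \gt g_{0,x}$ must coincide with $\tilde{\gt g}_0$, forcing $\gt g_{0,x} \subseteq \tilde{\gt g}_0$.

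For the second step, I would use the invariance identity $([x, v], \xi) = (v, [\xi, x])$ together with non-degeneracy of the pairing $\gt g_1 \times \gt g_{-1} \to \C$ to identify the orthogonal complement of $[x, \gt g_{-1}]$ in $\gt g_0$ with $\gt g_{0,x}$; hence $[x, \gt g_{-1}] = \gt g_{0,x}^\perp$ inside $\gt g_0$. Combined with $h \perp \gt g_{0,x}$ from the first step, this gives $2h \in [x, \gt g_{-1}]$, so $[x, y] = 2h$ for some $y \in \gt g_{-1}$. The relations $[2h, x] = 2x$ and $[2h, y] = -2y$ are automatic from the grading, so $(2h, x, y)$ is the desired homogeneous $\ssl_2$-triple, showing $2h$ is a Dynkin characteristic of $x$.

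The main obstacle is the hyperplane argument in the first step of the converse: passing from $h \notin \tilde{\gt g}_0 + \gt g_{0,x}$ to $\gt g_{0,x} \subseteq \tilde{\gt g}_0$ is where the codimension-one structure of the reduced $\theta$-group enters essentially, and it hinges on $(h, h) \ne 0$ so that $h$ itself is not in $\tilde{\gt g}_0$. Everything after this reduction is a direct application of invariance and non-degeneracy of the form.
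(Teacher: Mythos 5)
Your argument is correct, and while your ($\Leftarrow$) direction is exactly the paper's (Lemma~\ref{D-char} combined with Lemma~\ref{conic}), your ($\Rightarrow$) direction takes a genuinely different route. The paper proves the contrapositive: it starts from an arbitrary Dynkin characteristic $\hat h$ of $x$ (whose existence it takes for granted, since every element of $\gt g_1$ is nilpotent in the $\Z$-graded setting), writes $\hat h = ah + h_0$ with $h_0\in\tilde{\gt g}_0$, and shows that either $[h_0,x]=bx$ with $b\ne 0$ --- so the orbit is conical and the two orbits coincide by Lemma~\ref{conic} --- or $a=2$ and $[h_0,x]=0$, in which case $(\hat h,h_0)=(h,h_0)=0$ forces $(h_0,h_0)=0$, and $h_0=0$ follows from the fact that $h_0$ lies in $[\gt g,\gt g]$ and has integer eigenvalues. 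You instead build the homogeneous $\ssl_2$-triple directly: from $x\notin[\tilde{\gt g}_0,x]$ you get $h\notin\tilde{\gt g}_0+\gt g_{0,x}$, the codimension-one position of $\tilde{\gt g}_0$ gives $\gt g_{0,x}\subseteq\tilde{\gt g}_0$, and the identification $[x,\gt g_{-1}]=\gt g_{0,x}^{\perp}$ inside $\gt g_0$ (via invariance and non-degeneracy of the pairing between $\gt g_1$ and $\gt g_{-1}$) produces $y\in\gt g_{-1}$ with $[x,y]=2h$. Your version neither invokes the prior existence of a characteristic nor needs the rationality-of-eigenvalues argument that turns $(h_0,h_0)=0$ into $h_0=0$; the special choice of scalar product enters only through $(h,h)\ne 0$, which gives $\gt g_0=\tilde{\gt g}_0\oplus\C h$. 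In exchange, the paper's computation identifies the Dynkin characteristic as $2h$ without any work in $\gt g_{-1}$ and exhibits the conical structure of the orbit in the coincidence case. Both proofs are complete; only the trivial remark that the forward implication is vacuous for $x=0$ is left implicit in yours.
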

\begin{proof}
Let $\hat h\in\gt g_0$ be a Dynkin characteristic of $x$. 
We can write it 
as $\hat h=ah+h_0$ with $a\in\C$ and $h_0\in\tilde{\gt g}_0$. 
Since $[\hat h,x]=2x=ax+[h_0,x]$,  either $\C x\subset [\tilde{\gt g}_0,x]$ or
$a=2$ and $[h_0,x]=0$. 
In the latter case $(\hat h,h_0)=0$. 
Taking into account the equality $(h,h_0)=0$, we get that $(h_0,h_0)=0$.
Since $\hat h$ is a Dynkin characteristic, it lies in $[\gt g,\gt g]$. 
Hence $h_0\in[\gt g,\gt g]$, because  $h$ also does.
Moreover, eigenvalues of $\ad(h)$ are integers by the construction, 
and the same holds for $\ad(\hat h)$, because it comes from an $\gt{sl}_2$-triple.
Since $[h,\hat h]=0$, the eigenvalues of $\ad(h_0)$ are integers as well. 
According to our choice of the scalar product,
the equality $(h_0,h_0)=0$ is possible only if $h_0=0$. 
One concludes that $2h$ is a Dynkin characteristic of $x$.

We have shown that if $2h$ is not a Dynkin characteristic of $x$, 
then $[h_0,x]=bx$ with $b\in\C^{^\times}$, in particular, $\tilde G_0x$ is a conical orbit. 
By Lemma~\ref{conic}, $G_0x=\tilde G_0x$.

If $2h$ is a Dynkin characteristic of $x$,
then $[\tilde{\gt g}_0,x]\ne [\gt g_0,x]$ by Lemma~\ref{D-char} and therefore 
$G_0x\ne\tilde G_0x$.
%
%
\end{proof}

\begin{rem}\label{rem-kac}
In case $\gt g$ is simple and the representation of $G_0$ on
$\gt g_1$ is irreducible, Theorem~\ref{char-reduced} was proved by 
V.\,Kac, see \cite[Proposition~3.2]{Kac-some}. It is also mentioned 
without a proof in \cite[Section~8.5]{povin} that the statement holds 
for an arbitrary reduced $\theta$-group. Since we could not find 
a general case proof in the literature, we decided to include it here. 
\end{rem}

\section{Criteria for inclusion}

In this section we state and prove the main criterion (Proposition \ref{prop:3})
that we use for deciding
whether a given nilpotent orbit is contained in the closure of another given
nilpotent orbit. This reduces the problem of checking inclusion to a finite number of
checks, each corresponding to an element of a certain orbit of the Weyl group $W_0$.
Subsequently we give some observations that help when using the criterion.

\begin{prop}\label{prop:3}
Let the notation be as in Corollary~\ref{cor:thm2}. Then $\mathcal{O}' \subset 
\overline{\mathcal{O}}$ if and only if there is a $w\in W_0$ such that 
$U=V_2(h') \cap V_{\geq 2}(wh)$ contains a point of $\mathcal{O}'$.
Moreover, in that case the intersection of $U$ and $\mathcal{O}'$ is dense in $U$.
\end{prop}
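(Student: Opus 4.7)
The plan is to handle the three parts of the statement in order.

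The \emph{if} direction follows immediately from Theorem~\ref{thm:2}: if $y \in U \cap \mathcal{O}'$, then for any lift $\dot w \in N_{G_0}(\h_0)$ of $w$ we have $V_{\geq 2}(wh) = \dot w \cdot V_{\geq 2}(h)$, so $y$ lies in $G_0 \cdot V_{\geq 2}(h) = \overline{\mathcal{O}}$; since $\overline{\mathcal{O}}$ is $G_0$-stable and closed, the entire orbit $\mathcal{O}'$ is contained in it.

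For the \emph{only if} direction, my strategy is to combine a Bruhat decomposition with the bigrading supplied by the commuting pair $h'$ and $wh$. By Corollary~\ref{cor:thm2} I would first pick $x = g \cdot e' \in \mathcal{O}' \cap V_{\geq 2}(h)$ for some $g \in G_0$. Because $h, h' \in C_0$, both $P(h)$ and $P(h')$ are standard parabolics containing a common Borel of $G_0$, so $G_0 = P(h') W_0 P(h)$; writing $g^{-1} = p' \dot w p$ and using that $P(h)$ preserves $V_{\geq 2}(h)$ gives
\[
(p')^{-1} e' \;=\; \dot w \cdot (p \cdot x) \;\in\; V_{\geq 2}(wh).
\]

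The main obstacle is that $(p')^{-1} e'$ a priori sits only in $V_{\geq 2}(h')$, not in the sharper $V_2(h')$ the statement demands. I would resolve this via two observations. First, the Levi decomposition $(p')^{-1} = z u$ with $z \in Z(h')$ and $u$ in the unipotent radical of $P(h')$, together with the fact that $Z(h')$ preserves the $h'$-grading while the unipotent radical strictly raises it, gives
\[
(p')^{-1} e' \;=\; z \cdot e' + \eta, \qquad z \cdot e' \in V_2(h'),\quad \eta \in V_{\geq 3}(h').
\]
Second, since $h'$ and $wh$ both lie in the abelian $\h_0$ they commute, so $V_{\geq 2}(wh)$ is $h'$-stable and splits cleanly as $(V_2(h') \cap V_{\geq 2}(wh)) \oplus (V_{\geq 3}(h') \cap V_{\geq 2}(wh))$. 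Matching $h'$-weight components in the containment $(p')^{-1} e' \in V_{\geq 2}(wh)$ then forces $z \cdot e' \in V_2(h') \cap V_{\geq 2}(wh) = U$, and since $z \cdot e' \in Z(h') \cdot e' \subset \mathcal{O}'$, this supplies the required point of $U \cap \mathcal{O}'$.

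For the density assertion, I would invoke Lemmas~\ref{lem:0} and~\ref{lem:1}, which together identify $\mathcal{O}' \cap V_2(h')$ with $Z(h') \cdot e'$, an open dense subset of $V_2(h')$. Since $U$ is an irreducible linear subspace of $V_2(h')$, once $U$ is known to meet this dense orbit, $U \cap \mathcal{O}'$ is the complement in $U$ of a proper closed subset and is therefore itself open and dense in $U$.
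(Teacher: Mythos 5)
Your proof is correct and follows essentially the same route as the paper: the Bruhat decomposition $G_0=\bigcup_{w\in W_0}P(h')wP(h)$, the Levi decomposition of $P(h')$ to isolate the $V_2(h')$-component $z\cdot e'$ of $(p')^{-1}e'$, and the $h'$-stability of $V_{\geq 2}(wh)$ (since $h'$ and $wh$ commute) to conclude that this component lies in $U$. The only cosmetic difference is in the final density claim, where the paper invokes Theorem~\ref{thm:1} (membership in $\mathcal{O}'$ equals non-nilpotency with respect to $\widetilde{Z}(h')$) while you use Lemmas~\ref{lem:0} and~\ref{lem:1}; both arguments show that $U\cap\mathcal{O}'$ is the intersection of $U$ with an open subset of $V_2(h')$, hence open and dense in $U$ once non-empty.
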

\begin{proof}
The ``if'' part follows directly from Theorem~\ref{thm:2}. Therefore suppose that 
$\mathcal{O}' \subset \overline{\mathcal{O}}$.

By the Bruhat decomposition we have that
$$ G_0 = \bigcup_{w\in W_0} P(h')wP(h).$$
By Theorem~\ref{thm:2}, 
\begin{align*}
\overline{\mathcal{O}} &= G_0\left(V_{\geq 2}(h)\right) \\
&= \bigcup_{w\in W_0} P(h') wP(h) \left(V_{\geq 2}(h)\right)\\
&= \bigcup_{w\in W_0} P(h') w \left(V_{\geq 2}(h)\right).
\end{align*}
Let $(h',e',f')$ be a homogeneous $\ssl_2$-triple. 
Then it follows from the above that there exist $w\in W_0$, $p\in P(h')$, 
and $x\in V_{\geq 2}(h)$ with $e' = pwx$, or, equivalently, $p^{-1}e' = wx$. 

Next $P(h') = Z(h')\ltimes N$, where $N$ is a connected subgroup of $G_0$ 
whose Lie algebra is the sum of the eigenspaces of $h'$ in $\g_0$ 
with positive eigenvalues. In particular, 
$p^{-1} = ln$ with $l\in Z(h')$ and  $n\in N$. 
Since $e'\in V_2(h')$, we have $ne' = e' + y$, 
where $y\in V_{\geq 3}(h')$. Now $p^{-1}e' = le' + ly$, with $le' \in V_2(h')$ and
$ly \in V_{\geq 3}(h')$. 
In particular, 
$p^{-1}e'$ lies in $V_{\geq 2}(h')$.
Since 
$p^{-1}e' =wx$  and $wx\in V_{\geq 2}(wh)$, it also lies in 
$\tilde U= V_{\geq 2}(h') \cap V_{\geq 2}(wh)$. 

The elements $h'$ and $wh$ commute and thereby $\tilde U$ is stable 
under the action of $h'$. That is, $\tilde U$ is
the direct sum of $h'$-eigenspaces. It follows that $\tilde U$ contains $le'$, which is 
obviously an element of $\mathcal{O}'$. 
Moreover, 
$le'\in V_{2}(h')$ and hence $le'\in U$, where 
$U=V_{2}(h')\cap V_{\geq 2}(wh)$. 

By Theorem \ref{thm:1}, an element $v\in U$ lies in $\mathcal{O}'$ if and only if it
is not nilpotent with respect to the action of $\widetilde{Z}(h')$. Threfore if the intersection
of $U$ and $\mathcal{O}'$ is not empty, then it has to be open and dense. 
\end{proof}

\begin{prop}\label{prop:4}
Let $(h',e',f')$, $(h,e,f)$ be homogeneous $\ssl_2$-triples, with $e' \in 
\mathcal{O}'$, $e \in \mathcal{O}$. 
Let $\kappa$ denote the Killing form of $\g$. If $\kappa(h',h) < 
\kappa(h',h')$ then $V_2(h')\cap V_{\geq 2}(h)$ contains no points of 
$\mathcal{O}'$.
\end{prop}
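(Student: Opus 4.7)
My plan is to prove the contrapositive: if a point $v\in\mathcal O'$ lies in $V_2(h')\cap V_{\geq 2}(h)$, then $\kappa(h',h)\ge\kappa(h',h')$. By Lemma~\ref{lem:1}, $h'$ is a Dynkin characteristic of such a $v$, so there is a unique $f''\in\gt g_{-1}$ with $(h',v,f'')$ a homogeneous $\ssl_2$-triple. Because $h,h'\in\h_0$ commute, I decompose $v=\sum_{k\ge 2}v_k$ with $v_k\in V_k(h)\cap V_2(h')$, and $f''=\sum_j f''_j$ with $f''_j\in V_j(h)\cap V_{-2}(h')$.

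The first step is a short calculation. Matching $h$-weights in $[v,f'']=h'$ forces $\sum_{k\ge 2}[v_k,f''_{-k}]=h'$. Applying $\kappa(h',\cdot)$ and $\kappa(h,\cdot)$ to this identity, and using invariance of $\kappa$ together with the orthogonality $\kappa(V_k(h),V_j(h))=0$ for $k+j\ne 0$, will give
\[
\kappa(h',h')=2\sum_{k\ge 2}\kappa(v_k,f''_{-k}),\qquad \kappa(h',h)=\sum_{k\ge 2}k\,\kappa(v_k,f''_{-k}),
\]
so that $\kappa(h',h)-\kappa(h',h')=\sum_{k\ge 2}(k-2)\kappa(v_k,f''_{-k})$. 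It then remains to show this sum is non-negative.

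For that, I will invoke the Kempf--Hesselink interpretation of the Dynkin characteristic discussed in the introduction and in Remark~\ref{two-char} via \cite{povin}: $h'$ is the $\kappa$-minimum element of the convex set $\Lambda_v=\{h^*\in\h_0(\mathbb R):v\in V_{\ge 2}(h^*)\}$, where $\kappa$ is positive-definite on the real form. The hypothesis puts $h\in\Lambda_v$, and the whole segment $h_t:=h'+t(h-h')$ with $t\in[0,1]$ lies in $\Lambda_v$, since $[h_t,v_k]=(2+t(k-2))v_k$ has eigenvalue $\ge 2$ for $k\ge 2$ and $t\in[0,1]$. First-order minimality of $\kappa(h_t,h_t)$ at $t=0$ then yields
\[
0\le\tfrac{d}{dt}\kappa(h_t,h_t)\big|_{t=0}=2\kappa(h',h-h'),
\]
which is exactly $\kappa(h',h)\ge\kappa(h',h')$, contradicting the hypothesis.

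The main obstacle is the non-negativity of $\sum_{k\ge 2}(k-2)\kappa(v_k,f''_{-k})$: for $k>2$ the individual pairings $\kappa(v_k,f''_{-k})$ do not wear their sign on their sleeve, and the non-negativity of the weighted sum genuinely encodes the variational property of $h'$. Setting up the algebraic identity in the second paragraph is routine; the substance of the proof lies in the identification of the Dynkin characteristic with the Kempf--Hesselink optimal one-parameter subgroup coming from \cite{povin}, and once that identification is granted the convex-geometric step is automatic.
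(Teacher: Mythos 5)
Your argument is correct, but it takes a genuinely different route from the paper. The paper argues directly: since $h,h'\in\h_0$ commute, one writes $h=ah'+t$ with $t\in\tilde{\z}(h')$ and $a=\kappa(h',h)/\kappa(h',h')<1$; then $t$ acts on $V_2(h')\cap V_{\geq 2}(h)$ with eigenvalues $k-2a>0$, so every element of that space is nilpotent for the torus generated by $t$, hence for $\widetilde{Z}(h')$, and Theorem~\ref{thm:1} together with Lemma~\ref{lem:1} excludes points of $\mathcal{O}'$. You instead prove the contrapositive by a variational argument: $h'$ is the $\kappa$-minimal point of the convex set $\Lambda_v=\{h^*: v\in V_{\geq 2}(h^*)\}$, the segment from $h'$ to $h$ stays in $\Lambda_v$, and first-order minimality gives $\kappa(h',h-h')\geq 0$. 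This is sound, and it makes the geometric meaning of the inequality transparent, but note two things. First, your opening computation (the identity $\kappa(h',h)-\kappa(h',h')=\sum_{k\ge 2}(k-2)\kappa(v_k,f''_{-k})$) is never used once you switch to the convexity argument; it can be deleted. Second, your load-bearing input --- that the Dynkin characteristic coincides with the Kempf--Hesselink optimal direction, i.e.\ the minimal-$\kappa$-norm element of $\Lambda_v$ --- is only invoked informally in the paper's introduction and Remark~\ref{two-char} via \cite{povin}; it is not among the formally stated results. The paper's proof deliberately avoids this by using only the stated nilpotency criterion of Theorem~\ref{thm:1}, which is the weaker, ``infinitesimal'' consequence of optimality that suffices here. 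So your proof is correct modulo carefully sourcing that identification from \cite{povin}, whereas the paper's is self-contained within its own stated toolkit and somewhat more elementary.
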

\begin{proof}
Note that $h \in \z(h')$, hence $h = a h' +t$, where $a\in \C$ and $t\in \tilde\z(h')$.
Moreover, 
$$a = \frac{\kappa(h',h)}{\kappa(h',h')},$$
which is in $\Q$ and $<1$. Hence $t$ has only positive eigenvalues on 
$V_2(h')\cap V_{\geq 2}(h)$.  Let $T$ be the connected subgroup of $G_0$
whose Lie algebra is spanned by $t$. Then all elements of $V_2(h')\cap V_{\geq 2}(h)$
are nilpotent with respect to $T$, and in particular with respect to $\widetilde{Z}(h')$.
Hence by Theorem \ref{thm:1} and Lemma \ref{lem:1}, 
the former space contains no points of $\mathcal{O}'$.
\end{proof}

Let $\mathfrak{l}$ be a Lie algebra acting on a vector space $M$. Then for $v\in M$ we
denote its stabiliser by $\mathfrak{l}_v$, i.e., 
$$\mathfrak{l}_v = \{ x\in \mathfrak{l} \mid x{\cdot} v = 0\}.$$
The set of $v\in M$ with $\dim \mathfrak{l}_v$ minimal is open and dense in $M$. 

\begin{prop}\label{prop:5}
Let $(h',e',f')$, $(h,e,f)$ be homogeneous $\ssl_2$-triples, with $e' \in 
\mathcal{O}'$, $e \in \mathcal{O}$.
Let $d = \dim \z(h')_{e'}$. Let $d'$ be the minimal dimension of 
$\z(h')_u$, for $u\in V_2(h')\cap V_{\geq 2}(h)$. Then $d\leq d'$. Moreover,
$V_2(h')\cap V_{\geq 2}(h)$ contains a point of $\mathcal{O}'$ if and 
only if $d=d'$
\end{prop}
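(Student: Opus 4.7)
The plan is to reduce everything to Lemma \ref{lem:0} (density of $Z(h')e'$ in $V_2(h')$) together with Lemma \ref{lem:1} (which identifies $\mathcal{O}'\cap V_2(h')$ with the single orbit $Z(h')e'$). First I would establish the inequality $d\le d'$ by showing that $d$ is in fact the minimum of $\dim\z(h')_v$ over all of $V_2(h')$, not only over the subset $V_2(h')\cap V_{\geq 2}(h)$. The stabiliser-dimension function is upper semi-continuous, so the locus of minimum-stabiliser elements is open and dense in $V_2(h')$, and by Lemma \ref{lem:0} it meets the dense orbit $Z(h')e'$, on which the stabiliser dimension is constantly $d$. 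Restricting to any subset can only raise the minimum, giving $d\le d'$.

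The forward half of the ``moreover'' is then immediate: if some $u\in V_2(h')\cap V_{\geq 2}(h)$ lies in $\mathcal{O}'$, Lemma \ref{lem:1} forces $u\in Z(h')e'$, so $\dim\z(h')_u=d$ and therefore $d'=d$.

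For the converse I would start from $d=d'$, choose $u\in V_2(h')\cap V_{\geq 2}(h)$ with $\dim\z(h')_u=d$, and argue that $Z(h')u$ must coincide with $Z(h')e'$. A dimension count gives
\[
\dim Z(h')u \;=\; \dim Z(h')-d \;=\; \dim Z(h')e' \;=\; \dim V_2(h'),
\]
where the final equality uses Lemma \ref{lem:0}. Since orbits of algebraic group actions are locally closed, a full-dimensional orbit in the irreducible variety $V_2(h')$ is open and dense; two such orbits cannot be disjoint, so $Z(h')u=Z(h')e'$ and hence $u\in\mathcal{O}'$.

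I do not anticipate any real obstacle here beyond bookkeeping; the whole statement is essentially an instance of ``the dense orbit in an irreducible ambient variety is unique.'' The only point worth double-checking is that the argument silently uses that stabiliser dimensions are constant along a $Z(h')$-orbit, so that having minimum stabiliser dimension is the same as lying in the (unique) open orbit of $Z(h')$ on $V_2(h')$.
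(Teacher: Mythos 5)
Your proposal is correct and follows essentially the same route as the paper: both directions rest on Lemma~\ref{lem:0} (the dense orbit $Z(h')e'$ realises the minimal stabiliser dimension on all of $V_2(h')$, giving $d\le d'$) and on the uniqueness of the dense $Z(h')$-orbit to conclude $Z(h')u=Z(h')e'$ when $d=d'$. You are merely more explicit than the paper about the semicontinuity argument and about invoking Lemma~\ref{lem:1} for the forward implication, both of which the paper leaves implicit.
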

\begin{proof}
By Lemma \ref{lem:0}, the stabiliser of $e'$ in $\z(h')$ has minimal
possibile dimension. 

Furthermore, if $d=d'$ then there is $u\in V_2(h')\cap V_{\geq 2}(h)$ such that
$\dim \z(h')_u = \dim \z(h')_{e'}$. Hence the dimension of the $Z(h')$-orbit of $u$ is
the same as the dimension of $Z(h')e'$. So $Z(h')u$ is dense in $V_2(h')$ as well.
The conclusion is that $Z(h')e' = Z(h')u$, and $u$ lies in $\mathcal{O}'$.
\end{proof}

\begin{prop}\label{prop:6}
Let $(h',e',f')$, $(h,e,f)$ be homogeneous $\ssl_2$-triples, with $e \in 
\mathcal{O}$, $e' \in \mathcal{O}'$. Set $U = V_2(h')\cap V_{\geq 2}(h)$. Let $\mathfrak{n} = 
N_{\g_0}(U) = \{ x \in \g_0 \mid [x,U]\subset U\}$. Let $u\in U$; if $[\mathfrak{n},u]=
U$, and $u\not\in \mathcal{O}'$, then $U$ has no point of $\mathcal{O}'$.
\end{prop}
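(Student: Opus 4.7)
The plan is to exploit the hypothesis $[\mathfrak{n},u]=U$ to produce an open subset of $U$ that lies entirely in $G_0u$, and then to invoke the density statement from Proposition~\ref{prop:3} to obtain a contradiction if $U$ were to contain any point of $\mathcal{O}'$.

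First I would introduce $N\subset G_0$ as the connected algebraic subgroup with $\Lie N=\mathfrak{n}$. Because $[\mathfrak{n},U]\subset U$ and the representation of $\gt g_0$ on $\gt g_1$ exponentiates to the linear action of $G_0$, the group $N$ stabilises $U$ set-theoretically; here the key routine point is simply that $\exp(tx)$ preserves a subspace as soon as $x$ does. Thus $N$ acts on the linear (hence irreducible) variety $U$. The tangent space of the orbit $Nu\subset U$ at the point $u$ equals $[\mathfrak{n},u]$, which by hypothesis is all of $U$, so $\dim Nu=\dim U$. Since an orbit of an algebraic group action is always locally closed in its closure, and $\overline{Nu}$ is an irreducible subvariety of $U$ of the same dimension as $U$, we conclude that $\overline{Nu}=U$ and that $Nu$ is open (and dense) in $U$.

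Next I would observe that $Nu\subset G_0u$, and since $u\notin\mathcal{O}'$ the entire $G_0$-orbit $G_0u$ is disjoint from $\mathcal{O}'$. Consequently $Nu$ is an open dense subset of $U$ that contains no point of $\mathcal{O}'$. On the other hand, the ``moreover'' part of Proposition~\ref{prop:3} (applied with $w=1$, which is allowed since its proof rests only on Theorem~\ref{thm:1} and Lemma~\ref{lem:1} characterising membership in $\mathcal{O}'$ via non-nilpotency under $\widetilde{Z}(h')$, a Zariski-open condition) guarantees that if $U\cap\mathcal{O}'$ were nonempty, it would be open and dense in $U$. Two open dense subsets of the irreducible variety $U$ must meet, so we would have $Nu\cap\mathcal{O}'\neq\emptyset$, contradicting the previous sentence. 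Hence $U\cap\mathcal{O}'=\emptyset$.

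The only nontrivial step is the one I placed in the second paragraph: verifying that the condition $[\mathfrak{n},u]=U$, which is an infinitesimal statement, indeed forces $Nu$ to be open in $U$. This would be the main thing to justify carefully, but it follows from the standard dimension count $\dim Nu=\dim[\mathfrak{n},u]$ combined with the irreducibility of $U$ as a linear subspace. Everything else is a direct combination of Proposition~\ref{prop:3} with the $G_0$-invariance of $\mathcal{O}'$.
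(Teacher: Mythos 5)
Your proposal is correct and follows essentially the same argument as the paper: the hypothesis $[\mathfrak{n},u]=U$ makes the $N_{G_0}(U)$-orbit of $u$ dense in $U$, while Proposition~\ref{prop:3} makes $\mathcal{O}'\cap U$ dense in $U$ if nonempty, and the two dense sets would have to meet, contradicting $u\notin\mathcal{O}'$ and the $G_0$-invariance of $\mathcal{O}'$. You merely spell out in more detail the standard dimension count that the paper leaves implicit.
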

\begin{proof}
Indeed, if $U$ has a point of $\mathcal{O}'$, then the intersection of $\mathcal{O}'$ and
$U$ is dense in $U$. But also the $N_{G_0}(U)$-orbit of $u$ is dense in $U$. So the two sets
must intersect, which is not possible.
\end{proof}

\section{Orbits of the Weyl group}\label{Weyl-tree-sec}

In our algorithm we need to loop over an orbit $W_0h$, where $h\in \h_0$. In this 
section we briefly describe how this is done. For simplicity we assume that the centre
of $\g_0$ is zero. If this is not the case then $\g_0$ has to be replaced by its
derived subalgebra $[\g_0,\g_0]$, and $\h_0$ by its intersection with $[\g_0,\g_0]$.

We let $\kappa$ denote the Killing form of $\g$. Since it is non-degenerate on $\h_0$ it
gives an isomorphism $\h_0^*\to \h_0$, $\alpha \mapsto \hat{\alpha}$. This yields an
inner product on $\h_0^*$ by $(\alpha,\beta) = \kappa( \hat{\alpha}, \hat{\beta} )$.

Let $\Phi_0$ be the root system of $\g_0$ with respect to $\h_0$. Let 
$\Delta_0 =\{\alpha_1,\ldots,\alpha_l\}$ be a fixed basis of $\Phi_0$. The corresponding set of 
positive roots will be denoted $\Phi_0^+$.

For $\alpha\in \Phi_0$ we set
$$\alpha^\vee = \frac{2 \hat{\alpha}}{(\alpha,\alpha)}\in \h_0.$$
The Weyl group $W_0$ is generated by the simple reflections $s_i=s_{\alpha_i}$. For 
$h\in \h_0$ we have $s_i(h) = h -\alpha_i(h)\alpha_i^\vee$. 

We use a basis $h_1,\ldots,h_l$ of $\h_0$, defined by $\alpha_i(h_j) = \delta_{ij}$.
Then, if $h = \sum_i a_i h_i$, we get $s_j(h) = h - a_j\alpha_j^\vee$. The elements $h$
of which we compute the $W_0$-orbit,
lie in an $\ssl_2$-triple. This implies that the coefficients of $h$
with respect to this basis are integers. 
The dominant Weyl chamber $C_0$ consists 
of the elements of $\h_0$ having non-negative coefficients with respect to the basis
$h_1,\ldots,h_l$. 

Now let $h\in \h_0$ be the element of which we want to compute the orbit $W_0h$. Since
every orbit of $W_0$ has a unique point in $C_0$, we may assume that $h\in C_0$. 
Let $\hat h\in W_0h$, then we define the length of $\hat h$, denoted $\ell(\hat h)$, 
as the length of
a shortest $w\in W_0$ with $\hat h=wh$. Then
$$\ell(\hat h) = |\{ \alpha\in \Phi_0^+ \mid \alpha(\hat h) <0 \}|.$$
This implies that $\ell(s_i\hat h) = \ell(\hat h)+1$ if and only if $a_i >0$, where 
$\hat h = \sum_i a_i h_i$. We use a criterion due to Snow (\cite{snow1}):

\begin{lem}\label{Weyl-tree}
Let $\tilde h=\sum_i a_i h_i$ be an element of $W_0h$ of length $k+1$. Then there is a unique
$\hat h$ of length $k$ in $W_0h$ such that
\begin{itemize}
\item there is a simple reflection $s_i$ with $s_i(\hat h)=\tilde h$,
\item $a_j\geq 0$ for $i < j\leq l$.
\end{itemize}
\end{lem}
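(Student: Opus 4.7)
The plan is to translate both requirements of the lemma into purely coordinate conditions on the tuple $(a_1,\dots,a_l)$ of $\tilde h$, and then read off existence and uniqueness by picking the largest index at which $a_i$ is negative. First I would compute how $s_i$ changes coordinates. Writing $\alpha_i^\vee=\sum_j c_{ji}h_j$ with $c_{ji}=\alpha_j(\alpha_i^\vee)$ the Cartan integers (which follows from $\alpha_j(h_k)=\delta_{jk}$), the formula $s_i(\tilde h) = \tilde h - a_i\alpha_i^\vee$ becomes $s_i(\tilde h) = \sum_j(a_j - a_i c_{ji})h_j$; in particular the $i$-th coordinate of $s_i(\tilde h)$ is $-a_i$.

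Next I would translate the length criterion recalled just before the lemma, namely $\ell(s_i\hat h) = \ell(\hat h)+1$ iff the $i$-th coefficient of $\hat h$ is positive, into a criterion on $\tilde h$. Applied with $\hat h := s_i\tilde h$, whose $i$-th coordinate is $-a_i$, it says $\ell(\tilde h) = \ell(s_i\tilde h)+1$ iff $-a_i>0$. Thus the possible candidates $\hat h = s_i \tilde h$ of length exactly $k = \ell(\tilde h)-1$ are in bijection with the indices $i$ for which $a_i<0$; for such $i$, the equation $s_i(\hat h)=\tilde h$ holds automatically since $s_i$ is an involution.

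For existence I would choose $i$ to be the largest index with $a_i<0$. Such an $i$ exists because $\tilde h$ has length $k+1\geq 1$, so $\tilde h\notin C_0$ and at least one $a_j$ is negative. By maximality, $a_j\geq 0$ for all $j>i$, so both conditions of the lemma are fulfilled by $\hat h:=s_i\tilde h$. For uniqueness, suppose $i_1<i_2$ both satisfied the requirements; validity at $i_2$ forces $a_{i_2}<0$, while validity at $i_1$ forces $a_j\geq 0$ for all $j>i_1$, in particular $a_{i_2}\geq 0$, a contradiction. Hence the valid index, and with it the predecessor $\hat h$, is unique. The only thing to watch is the orientation bookkeeping: the predecessor condition is naturally phrased on the coordinates of $\hat h$, while the lemma prescribes a condition on the coordinates of $\tilde h$, and the sign flip $(\hat h)_i = -(\tilde h)_i$ is precisely what makes ``largest $i$ with $a_i<0$'' the correct recipe; beyond this routine translation I do not foresee a real obstacle.
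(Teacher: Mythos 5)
Your argument is correct. Note that the paper itself offers no proof of this lemma --- it is quoted as a criterion due to Snow with a citation --- so there is nothing to compare against beyond the surrounding conventions, which you use consistently: the coefficients $a_j$ in the second condition are indeed those of $\tilde h$, the identity $(s_i\tilde h)_i=-a_i$ follows from $\alpha_i(\alpha_i^\vee)=2$, and the descent criterion $\ell(\tilde h)=\ell(s_i\tilde h)+1\iff a_i<0$ is the correct transposition of the statement ``$\ell(s_i\hat h)=\ell(\hat h)+1\iff \hat h_i>0$'' given just before the lemma. Your existence argument (some $a_j<0$ since $\ell(\tilde h)\geq 1$ forces $\tilde h\notin C_0$, then take the largest such $j$) and the uniqueness argument (two valid indices $i_1<i_2$ would force both $a_{i_2}<0$ and $a_{i_2}\geq 0$) are both sound; the only cosmetic caveat is that the indices $i$ with $a_i<0$ need not be in bijection with the resulting elements $s_i\tilde h$, but since you prove uniqueness of the \emph{index}, uniqueness of $\hat h$ follows a fortiori.
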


Let $\tilde h, \hat h$ be as in the previous lemma. Then we say that $\hat h$ is the 
{\em predecessor} of $\tilde h$, and conversely, that $\tilde h$ is a {\em successor} of $\hat h$.
Let $\hat h = \sum_i b_i h_i$ be a given element of $W_0h$ of length $k$. 
Then it is straightforward
to determine its successors. Indeed, let $i$ be such that $b_i > 0$, and write 
$s_i(\hat h) = \sum_i a_i h_i$. Then this element is of length $k+1$, and it is a successor
of $\hat h$ if and only if $a_j \geq 0$ for $i < j \leq l$.

This means that we can define a tree: the nodes are the elements of $W_0h$, and there
is an edge from $\hat h$ to $\tilde h$ if and only if $\tilde h$ is a successor of $\hat h$.
By traversing this tree, we can efficiently
loop over $W_0h$. Every element of $W_0h$ comes at the cost of applying one reflection.
Moreover, we do not obtain the same element of $W_0h$ twice. 

\begin{rem}\label{rem:kdescent}
We finish this section with an observation that will be used later.
Let $h'$ be an element of $C_0$.
Let $\hat h \in W_0h$ be of length $k$ 
and suppose that $s_j \hat h$ is of length $k+1$. Write $\hat h = \sum_i a_i h_i$; then, 
as seen above, $a_j > 0$. Hence
$$\kappa(s_j\hat h, h') = \kappa( \hat h, h')-a_j \kappa(\alpha_j^\vee,h') \leq 
\kappa(\hat h, h').$$
Furthermore, equality happens if and only if $\alpha_j(h')=0$, which is equivalent
to $s_j$ lying in the stabiliser of $h'$.
\end{rem}

\section{Complement of the dense orbit}\label{sec:compl}

According to Proposition~\ref{prop:3}, we will have to check whether a 
subspace $U\subset V_2(h)$ contains a point of the dense orbit 
$Z(h)e$. If $U$ contains a point of $Z(h)e$ then the intersection of $U$ and 
$Z(h)e$ is dense in $U$ (Proposition \ref{prop:3}). So in that case, 
by trying random elements of $U$, we quickly find a $u\in U$ lying in $Z(h)e$; thus
proving that the intersection is non-empty. 
The most difficult part of the problem is to prove that 
$U$ contains no points of $Z(h)e$. Here we present two possible solutions. 

Let $v_1,\ldots,v_s$ and $x_1,\ldots,x_n$ be bases of 
$V_2(h)$ and $\z(h)$ respectively. Let also $w_1,\ldots, w_s$ (with $w_i\in V_2(h)^*$)
be the dual basis. Let $B$ denote the action matrix for the representation of 
$\z(h)$ on $V_2(h)^*$.  To be more explicit, the entries of $B$ are elements 
of $V_2(h)^*$, $b_{ij}=x_i{\cdot}w_j$. For $v\in V$, let $B_v$ denote the 
restriction of $B$ to $v$. The entries of this new matrix are 
$[x_i{\cdot}w_j](v)=w_j([v,x_i])$.
In the same spirit, we can define  the restriction of $B$ to $U$, $B_U$,
to be a matrix with entries in $U^*$. The rank of $B_U$ is calculated over 
the field $\mathbb C(U)$ (note that $U^*\subset \mathbb C(U)$). 

Using the fact that $[\xi,v]=0$ (with $\xi\in\z(h)$) if and only if 
$w_i([\xi,v])=0$ for all $i$, one can easily deduce that 
\begin{equation}\label{gen-rank}
\begin{array}{l}
({\sf i})\  \dim\z(h)_v=n-\rank B_v \ \text{for all $v\in V_2(h)$}; \\
({\sf ii})\  \dim Z(h)v=\rank B_v; \\
({\sf iii})\  \max\limits_{u\in U}\dim Z(h)u=\rank B_U; \\
({\sf iv})\ U\cap Z(h)e\ne\varnothing \ \text{if and only if $\rank B_U=s$.}  
\end{array}
\end{equation}

Depending on $s$ and $n$, computing the rank of $B_U$
over a function field may turn out to be rather time consuming. 
For this reason we also consider an alternative method, based
on another characterisation of the elements in $V_2(h)\setminus Z(h)e$,
which comes from Theorem~\ref{char-reduced}.

\begin{prop}\label{split}
Take $v\in V_2(h)$. 
Then the three conditions: $Z(h)v=\tilde Z(h)v$, $[\z(h),v]=[\tilde\z(h),v]$, and 
$v\in V_2(h)\setminus Z(h)e$, are equivalent.  
\end{prop}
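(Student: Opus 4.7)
My plan is to reduce this proposition to Theorem~\ref{char-reduced} applied to an auxiliary reductive subalgebra of $\g$. Writing $\g^{(k)}\subset\g$ for the $k$-eigenspace of $\ad(h)$, I will introduce
\[\mathfrak{n} := \bigoplus_{s\in\Z}\ \g_s \cap \g^{(2s)}.\]
Viewing $\mathfrak{n}$ as the kernel of the semisimple derivation of $\g$ that acts as $2i - k$ on $\g_i\cap\g^{(k)}$ shows that $\mathfrak{n}$ is reductive. It carries a $\Z$-grading with $\mathfrak{n}_0 = \z(h)$, $\mathfrak{n}_1 = V_2(h)$ and $\mathfrak{n}_{-1} = \g_{-1}\cap\g^{(-2)}$, defined by $h/2\in[\mathfrak{n},\mathfrak{n}]$ (indeed $h = [e,f]$ with $e\in\mathfrak{n}_1$, $f\in\mathfrak{n}_{-1}$). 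The orthogonal complement of $h/2$ inside $\mathfrak{n}_0$ is exactly $\tilde\z(h)$, so $(Z(h),\tilde Z(h))$ takes the role of $(G_0,\tilde G_0)$ for the reduced $\theta$-group associated with $(\mathfrak{n},h/2)$.

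The equivalence $(i)\Leftrightarrow(ii)$ will then be Lemma~\ref{conic} applied in this setup. Since $h$ is central in $\z(h)$, the group factorises as $Z(h) = \tilde Z(h)\cdot\exp(\C h)$, and $\exp(th)$ acts on $V_2(h)$ by the scalar $e^{2t}$. So $Z(h)v = \tilde Z(h)v$ iff the orbit $\tilde Z(h)v$ is conical, iff $\C v\subset[\tilde\z(h),v]$. Combined with $[\z(h),v] = [\tilde\z(h),v] + \C[h,v] = [\tilde\z(h),v] + \C v$, this is exactly $(ii)$.

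For $(i)\Leftrightarrow(iii)$, I will invoke Theorem~\ref{char-reduced} for $(\mathfrak{n},h/2)$:
\[Z(h)v\ne\tilde Z(h)v\ \Longleftrightarrow\ h = 2{\cdot}(h/2)\ \text{is a Dynkin characteristic of } v \text{ in } \mathfrak{n},\]
i.e., iff some $\hat f\in\g_{-1}\cap\g^{(-2)}$ completes $(h,v,\hat f)$ to a homogeneous $\ssl_2$-triple. The remaining task is to argue that this last condition is equivalent to $v\in Z(h)e$. One direction is immediate: if $v = g{\cdot}e$ with $g\in Z(h)$ then $(h,v,g{\cdot}f) = g{\cdot}(h,e,f)$ is such a triple. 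For the converse, fact~(2) at the start of Section~\ref{sec:pre}, applied to the homogeneous triples $(h,v,\hat f)$ and $(h,e,f)$ with common characteristic $h$, forces $v\in G_0 e = \mathcal{O}$; Lemma~\ref{lem:1} then upgrades this to $v\in Z(h)e$.

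The main hurdle I foresee is verifying all the hypotheses of Theorem~\ref{char-reduced} for $(\mathfrak{n},h/2)$, in particular reductivity (handled above) and the positivity $(s,s)>0$ required for the element $s = h_0 = \hat h - 2h$ that appears in its proof. The latter I expect to inherit from the ambient $\g$, since $\hat h$ and $h$, and hence $h_0$, all have integer $\ad$-eigenvalues on $\g$, so the positivity assumption already imposed on the scalar product of $\g$ applies directly.
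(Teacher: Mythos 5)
Your proof is essentially the paper's own: the paper introduces exactly your subalgebra ($\gt l=\bigoplus_i \gt l_i$ with $\gt l_i=(\g_{\hat i})_{2i}(h)$), identifies $\tilde Z(h)$ with the associated reduced $\theta$-group for the grading defined by $h/2$, and deduces both equivalences from Lemma~\ref{conic} and Theorem~\ref{char-reduced}. You are in fact more explicit than the paper on the step ``$h$ is a Dynkin characteristic of $v\in V_2(h)$ if and only if $v\in Z(h)e$'', which you settle correctly via fact~(2) of Section~\ref{sec:pre} together with Lemma~\ref{lem:1}. The one step that does not survive as written is your reductivity argument: the operator acting as $2i-k$ on $\g_i\cap\g^{(k)}$ is a genuine derivation (namely $2E-\ad h$, with $E$ the grading derivation) only for $\Z$-gradings; when $\theta$ has finite order $m$ the index $i$ is defined only modulo $m$, the formula $2i-k$ is not additive across the wrap-around, and no such derivation exists. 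The repair is either to realise $\mathfrak{n}$ as the fixed-point set of a semisimple \emph{automorphism} built from $\theta$ and $\exp$ of a multiple of $\ad h$, or, as the paper does, to observe that $\kappa$ restricts to a non-degenerate invariant form on $\mathfrak{n}$ (pairing $(\g_{\hat i})_{2i}(h)$ with $(\g_{\widehat{-i}})_{-2i}(h)$); the latter route has the advantage of simultaneously supplying the faithful trace-form representation and the positivity hypotheses of Section~\ref{reduced}, which you otherwise handle correctly, if separately, in your closing paragraph.
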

\begin{proof}
We are going to identify $\tilde Z(h)$ with a reduced $\theta$-group. To this end, 
for each $i\in\Z$, set $\hat i:=i \mod m$, if $\theta$ has a finite order $m$; and 
$\hat i:=i$ otherwise. Then $\gt l=\bigoplus\limits_{i\in\mathbb Z}\gt l_i$, 
where $\gt l_i=(\gt g_{\hat i})_{2i}(h)$, is a $\Z$-graded 
Lie subalgebra of $\gt g$ with $\gt l_1=V_2(h)$ and 
$\gt l_0=\gt z(h)$. Let $L\subset G$ be a connected subgroup with $\Lie L=\gt l$.
Since $\kappa$ defines a non-degenerate pairing 
between $(\gt g_{\hat i})_{2i}(h)$ and $(\gt g_{\hat{j}})_{-2i}(h)$ with $j=-i$, 
we get a non-degenerate $L$-invariant scalar product 
$(\,\,,\,):=\kappa|_{\gt l}$
on $\gt l$. In particular, $\gt l$ is a reductive subalgebra. 
Here $e\in(\gt g_1)_{2}(h)=\gt l_1$, $f\in(\gt g_{-1})_{-2}(h)=\gt l_{-1}$ and 
therefore $h\in[\gt l,\gt l]$. Note that the $\Z$-grading on $\gt l$ 
is defined by the eigenvalues of $h/2$.  

Recall that  $\gt g$ is assumed to be simple. 
Restricting the adgoint action of $G$ to $L$ we get a faithful representation
$\rho$ of $L$ on $\gt g$ such that $(x,y)={\rm tr}\,(\rho(x)\rho(y))$ 
for $x,y\in\gt l$ and $(\,\,,\,)=\kappa|_{\gt l}$. 
Thus, we are in the setting of Section~\ref{reduced} and can apply  
Theorem~\ref{char-reduced} to  
the $\Z$-graded reductive Lie algebra $\gt l$. 
Here $\tilde L_0=\tilde Z(h)$ and $\gt l_1=(\gt g_1)_{2}(h)=V_2(h)$.  
 
We have $[\z(h),v]=[\tilde\z(h),v]$ if and only if the $\tilde Z(h)$-orbit 
$\tilde Z(h)v$ is conical. Besides, $h$ is a Dynkin characteristic of
all elements in $Z(h)e$. Therefore both equivalences follow from 
Theorem~\ref{char-reduced}.   
\end{proof}

Assume that the basis $x_1,\ldots,x_n$ is chosen in such a way that 
$x_1=h$ and $x_2,\ldots,x_n$ form a basis of $\tilde\z(h)$. Let $\tilde B$ be a submatrix 
of $B$ consisting of the last $n-1$ rows (corresponding to the Lie subalgebra 
$\tilde\z(h)$). Let also $\tilde B_U$ be the restriction of $\tilde B$ to $U$. 
Since $\dim Z(h)v=\rank B_v$ and $\dim\tilde Z(h)v=\rank\tilde B_v$, 
Proposition~\ref{split}  gives us the following:
\begin{equation}\label{eq:split}
U\cap Z(h)e=\varnothing \ \text{if and only if $\rank B_U=\rank\tilde B_U$.}
\end{equation}
In other words, either $\rank B_u=\rank\tilde B_u$ or 
$u$ is an element of $Z(h)e$ and $\rank B_u=s$. 
The equality in equation~(\ref{eq:split}) is satisfied if and only if the first row of $B_U$ 
lies in the linear span  of the rows of $\tilde B_U$. In order to check this we use the following 
steps.

\begin{enumerate}
\item Take a random $u\in U$, compute the rank of $\tilde B_u$, say 
$\rank\tilde B_u=r$. 
\item Find an $r\times r$ non-zero minor of $\tilde B_u$, without loss of generality 
suppose that it is given by the first $r$ rows and the first $r$ columns. 
\item Check whether the first row of $B_U$ is contained in the span of the first
$r$ rows of $\tilde B_U$.
\end{enumerate}

If $u\in U$ is generic, i.e.,  
$\rank\tilde B_u=\rank\tilde B_U$, then the first $r$ rows of $\tilde B_U$ span
the row space of $\tilde B_U$. Hence step~(3) verifies whether 
the first row of $B_U$ is contained in the row space of $\tilde B_U$. Moreover, this will be
the case if and only if $U\cap Z(h)e$ is empty. (Also note that the check in the third
step can be done by computing $s{-}r$ minors of size $r{+}1$.)
Even if $u$ is not a generic element, it may still be true that the first row of 
$B_U$ is contained in the span of the first $r$ rows of $\tilde B_U$,
and the above procedure will prove that $U\cap Z(h)e=\varnothing$.

In many cases it is easier to carry out this procedure than to check
the inequality $\rank B_U<s$. For example, some 
$32\times 38$-matrices $B_U$ appeared while checking non-inclusions 
for a half-spin representation of $D_8$ (line 3 in Table~\ref{tab:time}) 
and 2760681 minors would have to be computed for them.
In other cases it may be easier to deal with the whole matrix, if,
for example, $B_U$ contains a zero column.  

It is not obvious beforehand which choice is the best.
In the implementation of our algorithm we do the following: if $s-n < s-r$, then 
it is checked whether $\rank B_U<s$. Otherwise we check whether the first
row if $B_U$ is contained in the first $r$ rows of $\tilde B_U$, using the procedure
outlined above. We do not claim that this always gives the best choice,
but {\it some} choice is better than none.

If it turns out that the first row of $B_U$ is not contained in the span
of the first $r$ rows of $\tilde B_U$, then it may still be the case that the intersection
is empty (if this happens, then necessarily  $\rank\tilde B_U>\rank\tilde B_u$). 
Then we will have to compute the rank of $B_U$. 
However, the probability of this event can be made arbitrarily small.

\section{The main algorithm}\label{sec:alg}

Here we describe our algorithm for deciding whether one of the two given nilpotent $G_0$-orbits in $\g_1$ lies in the closure of the other. 

First we consider the following problem:
given a homogeneous $\ssl_2$-triple
$(h,e,f)$ and $e'\in V_2(h)$, decide whether $e' \in G_0e$. We have a straightforward
solution for that, based on Lemma~\ref{lem:1}. The existence of $f'\in \g_{-1}$ with
$[h,f']=-2f'$ and $[e',f']=h$ is equivalent to a system of linear equations. We solve this 
system; if it has a solution then $e'$ lies in $G_0e$, otherwise it does not.

Throughout we fix a basis of the root system of $\g_0$ with respect to $\h_0$. 
Then the Weyl group $W_0$ is
generated by the reflections corresponding to the elements of this basis. Furthermore,
this choice also fixes a dominant Weyl chamber $C_0\in \h_0$. 
As before we let $\kappa$ denote the Killing form
of $\g$.

\begin{alg}
Input: two homogeneous $\ssl_2$-triples, $(h',e',f')$, $(h,e,f)$, such that $h',h\in C_0$.\\
Output: {\sc true} if $\mathcal{O}'=G_0e'$ is contained in the closure of 
$\mathcal{O}=G_0e$, {\sc false} otherwise.
\begin{enumerate}
\item If $\kappa(h',h) < \kappa(h',h')$ then return {\sc false}. 
Else go to the next step.
\item For all elements $wh\in W_0h$ do the following:
\begin{enumerate}
\item  If $\kappa(h',wh) \geq \kappa(h',h')$ then:
\begin{enumerate}
\item Select a random $u\in U=V_2(h')\cap V_{\geq 2}(wh)$. 
\item If $u\in \mathcal{O}'$ then return {\sc true}. Otherwise go to the next step.
\item Set $\mathfrak{n} = N_{\g_0}(U)$. If $[\mathfrak{n},u] \neq U$ then 
decide whether $U\cap Z(h)e$ is empty using the methods of Section~\ref{sec:compl}.
If the intersection is not empty then return {\sc true}.
\end{enumerate}
\end{enumerate}
\item If in the previous loop {\sc true} was never returned, then return {\sc false}.
\end{enumerate}
\end{alg}

\begin{prop}
The previous algorithm terminates correctly.
\end{prop}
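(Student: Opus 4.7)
The plan is to separate termination from correctness, and within correctness to track the two possible output values. Termination is immediate: the outer loop of Step~2 ranges over the finite set $W_0h$, which is enumerated by the Weyl-tree traversal of Section~\ref{Weyl-tree-sec} so that each element is visited exactly once. All inner operations---selecting a random vector in $U$, checking $u\in\mathcal{O}'$ via the linear $\ssl_2$-system mentioned at the start of Section~\ref{sec:alg}, forming the normaliser $\mathfrak n=N_{\g_0}(U)$ and the bracket $[\mathfrak n,u]$, and applying the rank-based tests of Section~\ref{sec:compl}---are finite linear-algebra computations, so the algorithm halts after finitely many steps.

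For correctness I would take Proposition~\ref{prop:3} as the backbone: $\mathcal{O}'\subset\overline{\mathcal{O}}$ if and only if some $U=V_2(h')\cap V_{\geq 2}(wh)$ contains a point of $\mathcal{O}'$. It suffices therefore to show that every $w\in W_0$ is either inspected or legitimately discarded. If \textsc{true} is returned at Step~2(a)(ii), the $\ssl_2$-test directly witnesses $u\in\mathcal{O}'\cap U$; if it is returned at Step~2(a)(iii), the routines of Section~\ref{sec:compl}, via~\eqref{gen-rank} and~\eqref{eq:split}, certify that $U\cap Z(h')e'\neq\varnothing$, and Lemma~\ref{lem:1} identifies this set with $\mathcal{O}'\cap V_2(h')\cap U$. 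In either case Proposition~\ref{prop:3} yields $\mathcal{O}'\subset\overline{\mathcal{O}}$.

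The more delicate case is a \textsc{false} output, which must account for every $w\in W_0$ at once. If Step~1 fires, then $h$ is dominant and $\kappa(h',h)<\kappa(h',h')$; iterating Remark~\ref{rem:kdescent} along the Weyl tree rooted at $h$ gives $\kappa(h',wh)\leq\kappa(h',h)<\kappa(h',h')$ for all $w\in W_0$, and Proposition~\ref{prop:4} eliminates every $U$ simultaneously. If instead Step~2 is exhausted, I split $W_0h$ into those $w$ with $\kappa(h',wh)<\kappa(h',h')$---again killed by Proposition~\ref{prop:4}---and those with $\kappa(h',wh)\geq\kappa(h',h')$, where the inner block executes. In the latter case, either Step~2(a)(iii) takes the shortcut $[\mathfrak n,u]=U$ together with the already-verified $u\notin\mathcal{O}'$, which is exactly the hypothesis of Proposition~\ref{prop:6} and rules out a point of $\mathcal{O}'$ in $U$, or the shortcut fails and the procedure of Section~\ref{sec:compl} concludes that $U\cap\mathcal{O}'=\varnothing$ by a rank computation.

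The one place where the argument feels thin to me is Step~2(a)(iii), where I must be sure that the decision promised by Section~\ref{sec:compl} is actually obtained even when the random $u$ happens not to be generic and the cheaper row-span test is inconclusive. I would close this gap by invoking the fall-back full rank computation of $B_U$ discussed at the end of Section~\ref{sec:compl}, which always decides emptiness of $U\cap Z(h')e'$; this ensures that no $w$ is ever left unresolved, and Proposition~\ref{prop:3} then supplies the final conclusion.
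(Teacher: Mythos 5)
Your proof is correct and follows essentially the same route as the paper's: Proposition~\ref{prop:3} as the backbone, Remark~\ref{rem:kdescent} with Proposition~\ref{prop:4} to justify Step~1 and the discarded $w$'s, Proposition~\ref{prop:6} for the normaliser shortcut, and the rank criteria of Section~\ref{sec:compl} (equivalently Proposition~\ref{prop:5}) for the final emptiness test. Your extra care about the non-generic $u$ in Step~2(a)(iii), resolved by the fall-back computation of $\rank B_U$, is a point the paper handles implicitly via Proposition~\ref{prop:5} but is worth making explicit.
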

\begin{proof}
It is obvious that the algorithm terminates, we must show that the output is correct.
We claim that the algorithm checks whether there is a $wh \in W_0 h$ such that
$U(wh)=V_2(h') \cap V_{\geq 2}(wh)$ contains a point of $\mathcal{O}'$. Then
by Proposition \ref{prop:3} the output is correct.

First of all we note that since $h',h\in C_0$ we have that the maximal value
of $\kappa(h',wh)$, for $wh\in Wh$, is $\kappa(h',h)$ (see Remark \ref{rem:kdescent}). 
Therefore, if $\kappa(h',h) < 
\kappa(h',h')$ then no space $U(wh)$ contains a point of $\mathcal{O}'$ (Proposition
\ref{prop:4}). So in this case we are immediately done.

Otherwise we inspect every $wh\in W_0h$. If $\kappa(h',wh) < \kappa(h',h')$ then
$U(wh)$ contains no points of $\mathcal{O}'$ by Proposition~\ref{prop:4}. 
So then we can discard it. 
Otherwise we select a random $u\in U(wh)$. If $u\in \mathcal{O}'$, then we are done.
If not, and $[\mathfrak{n},u]=U(wh)$, then $U(wh)$ has no points of $\mathcal{O}'$ by
Proposition~\ref{prop:6}. Finally $U(wh)$ contains an element with a minimal dimensional 
stabiliser if and only if $U(wh)$
has a point of $\mathcal{O}'$ by Proposition~\ref{prop:5}. 
\end{proof}

\begin{rem}
Note that if $U(wh)$ contains a point of $\mathcal{O}'$, 
then the set of such points
is open and dense in $U(wh)$, by Proposition~\ref{prop:3}. Hence in that case the random choice
has a high probability of finding an element $u\in \mathcal{O}'$.
\end{rem}

\begin{rem}
Now we make some observations that help to execute the algorithm more efficiently.
\begin{itemize}
\item Of course, we apply the algorithm only if $\dim \mathcal{O}' < \dim \mathcal{O}$, 
as otherwise there is no inclusion.
\item Inclusion also implies that $\dim \g_{k,e'} \geq \dim \g_{k,e}$ for all $k$; so if that
condition is not fulfilled, we also do not apply the algorithm. 
\item When looping over the orbit $W_0h$ we use the tree structure described in the
previous section. When doing this, several shortcuts can be made. First of all
if $\kappa(h',wh) < \kappa(h',h')$ then the entire subtree below $wh$ can be discarded,
by Remark \ref{rem:kdescent}.
Second, if $V_2(h')\cap V_{\geq 2}(wh)$ contains no points of $\mathcal{O}'$, and
the successor of $wh$ is $s_iwh$, where $s_i$ lies in the stabiliser of $h'$, then
also $V_2(h')\cap V_{\geq 2}(s_iwh)$ contains no points of $\mathcal{O}'$. 
Hence in that case
we can immediately jump to the next element of the orbit.
\item We collect the subspaces $U(wh)=V_2(h') \cap V_{\geq 2}(wh)$ that appear during the
execution of the algorithm. If a certain such subspace is contained in one that was
treated before, then we already know that it contains no points of $\mathcal{O}'$.
So in that case we can immediately go to the next round. 
All  calculations are done in a
basis $v_1,\ldots,v_s$ of $V_2(h')$ consisting of $\gt h_0$-eigenvectors. 
Each subspace $U(wh)$ is a linear span of $v_i$ such that
$(wh){\cdot}v_i=a_iv_i$ with $a_i\geq 2$. 
Therefore storing and verifying inclusions among the $U(wh)$ is a binary 
problem. 
\end{itemize}
\end{rem}

We have implemented this algorithm in the language of the computer algebra system
{\sf GAP}4 (\cite{gap4}), on top of the {\sf SLA} package (\cite{sla}), which has
implementations of algorithms to list the nilpotent orbits of a $\theta$-group.
One of the main problems for the practical computation lies in the methods of
Section \ref{sec:compl}, where minors of a matrix with entries in a function field
have to be computed. For these computations we use 
the computer algebra system {\sc Magma} (\cite{magma}). We have chosen
this system, because it has very efficient implementations of algorithms to compute
the determinant of a matrix with entries in a function field.

In Table~\ref{tab:time} we collect some experimental data with respect to the implementation
of our algorithm. All computations have been performed on a 3.16 GHz machine. 

\setlength{\unitlength}{1pt}
\begin{table}[htb]
\begin{tabular}{|r|l|r|r|r|}
\hline
$|\theta|$ & Kac diagram of $\theta$ & $\#$ orbits & {\sf GAP} & {\sc Magma} \\
\hline
1 & 
\begin{picture}(150,40)
  \put(3,4){\circle{6}}
  \put(23,4){\circle{6}}
  \put(43,4){\circle{6}}
  \put(63,4){\circle{6}}
  \put(83,4){\circle{6}}
  \put(43,24){\circle{6}}
  \put(6,4){\line(1,0){14}}
  \put(26,4){\line(1,0){14}}
  \put(46,4){\line(1,0){14}}
  \put(66,4){\line(1,0){14}}
  \put(43,7){\line(0,1){14}}
\put(103,4){\circle{6}}
\put(86,4){\line(1,0){14}}
\put(123,4){\circle{6}}
\put(106,4){\line(1,0){14}}
\put(143,4){\circle*{6}}
\put(126,4){\line(1,0){14}}
\end{picture} 
& 69 & 1003 & 397\\
2 & 
\begin{picture}(150,40)
  \put(3,4){\circle{6}}
  \put(23,4){\circle{6}}
  \put(43,4){\circle{6}}
  \put(63,4){\circle{6}}
  \put(83,4){\circle{6}}
  \put(43,24){\circle{6}}
  \put(6,4){\line(1,0){14}}
  \put(26,4){\line(1,0){14}}
  \put(46,4){\line(1,0){14}}
  \put(66,4){\line(1,0){14}}
  \put(43,7){\line(0,1){14}}
\put(103,4){\circle{6}}
\put(86,4){\line(1,0){14}}
\put(123,4){\circle*{6}}
\put(106,4){\line(1,0){14}}
\put(143,4){\circle{6}}
\put(126,4){\line(1,0){14}}
\end{picture} 
& 36 & 124 & 0\\
2 &
\begin{picture}(150,40)
  \put(3,4){\circle*{6}}
  \put(23,4){\circle{6}}
  \put(43,4){\circle{6}}
  \put(63,4){\circle{6}}
  \put(83,4){\circle{6}}
  \put(43,24){\circle{6}}
  \put(6,4){\line(1,0){14}}
  \put(26,4){\line(1,0){14}}
  \put(46,4){\line(1,0){14}}
  \put(66,4){\line(1,0){14}}
  \put(43,7){\line(0,1){14}}
\put(103,4){\circle{6}}
\put(86,4){\line(1,0){14}}
\put(123,4){\circle{6}}
\put(106,4){\line(1,0){14}}
\put(143,4){\circle{6}}
\put(126,4){\line(1,0){14}}
\end{picture} 
& 115 & 900 & 0.18 \\
3 & 
\begin{picture}(150,40)
  \put(3,4){\circle{6}}
  \put(23,4){\circle{6}}
  \put(43,4){\circle{6}}
  \put(63,4){\circle{6}}
  \put(83,4){\circle{6}}
  \put(43,24){\circle*{6}}
  \put(6,4){\line(1,0){14}}
  \put(26,4){\line(1,0){14}}
  \put(46,4){\line(1,0){14}}
  \put(66,4){\line(1,0){14}}
  \put(43,7){\line(0,1){14}}
\put(103,4){\circle{6}}
\put(86,4){\line(1,0){14}}
\put(123,4){\circle{6}}
\put(106,4){\line(1,0){14}}
\put(143,4){\circle{6}}
\put(126,4){\line(1,0){14}}
\end{picture}  
& 101 & 444 & 0.09\\
5 & 
\begin{picture}(150,40)
  \put(3,4){\circle{6}}
  \put(23,4){\circle{6}}
  \put(43,4){\circle{6}}
  \put(63,4){\circle*{6}}
  \put(83,4){\circle{6}}
  \put(43,24){\circle{6}}
  \put(6,4){\line(1,0){14}}
  \put(26,4){\line(1,0){14}}
  \put(46,4){\line(1,0){14}}
  \put(66,4){\line(1,0){14}}
  \put(43,7){\line(0,1){14}}
\put(103,4){\circle{6}}
\put(86,4){\line(1,0){14}}
\put(123,4){\circle{6}}
\put(106,4){\line(1,0){14}}
\put(143,4){\circle{6}}
\put(126,4){\line(1,0){14}}
\end{picture}  
& 105 & 335 & 0.04 \\[0.2ex]
\hline 
\end{tabular} \\[0.5ex]
\caption{Running times (in seconds) for the algorithm applied to several automorphisms
of the Lie algebra of type $E_8$. The first column lists the order of $\theta$, and the second
column its Kac diagram. The third column has the number of nilpotent orbits. The fourth and
fifth columns display, respectively, the time needed for executing the {\sf GAP} part
of the program, and the time spent in the {\sc Magma} part.}\label{tab:time}
\end{table}

From the table we see that the {\sf GAP} part essentially has no problems, also with
large examples. On some occasions it is not necessary to execute the {\sc Magma} part,
as with the second automorphism in the table. On other occasions this part has a trivial
running time, as with the last three examples. However, it also happens that a fair amount
of time is spent in the {\sc Magma} part, as with the first example.

\section{Further remarks on groups and orbits}\label{sec:rm} 

Here we collect some theoretical observations that could 
have been used in the algorithm, but turned out not to be 
of much practical value. 

Let $(f,h,e)$ be a homogeneous $\gt{sl}_2$-triple as before, we also keep 
all the previous notation, including $V_2(h)$. First we consider the actions 
of $Z(h)$ and $\tilde Z(h)$  on $V_2(h)$ more closely. 
As was already mentioned, $Z(h)$ acts on $V_2(h)$ with a dense 
open orbit, $Z(h)e$.

\subsection{The semi-invariant $P$.}\label{sub:inv} 

The stabiliser $\gt z(h)_e$, being the centraliser of an 
$\gt{sl}_2$-subalgebra generated by $e$, $h$, and $f$,  
is reductive and therefore the orbit $Z(h)e$ is an affine space.
This implies that the complement  
$V_2(h)\setminus Z(h)e$ is a divisor
and is a zero-set of 
a single semi-invariant polynomial, say $P$. 
To check whether a subspace $U=U(wh')=V_2(h)\cap V_{\ge 2}(wh')$ 
intersects the dense orbit, one just has
to look on the restriction of $P$ to $U$. 
In this terms, ${\mathcal O}$ lies in the closure 
of ${\mathcal O}'$ if and only if there is 
$U(wh')$ such that $P$ is non-zero on it. 
This could be a replacement for both: choosing a random element 
in $U$ and generic rank considerations.   

One can try to compute a polynomial $P$ by hand. This may involve typing errors and 
time-consuming calculations. 
It is also possible to get $P$ from the matrix 
$B$ with entries $b_{ij}=x_i{\cdot}w_j$, 
where $\{x_i\}$ is a basis of $\gt z(h)$ and $\{w_j\}$ is a basis of 
$V_2(h)^*$ (this matrix was already considered  in Section~\ref{sec:compl}.)    
The polynomial $P$ is the greatest common divisor 
of the largest, $\dim V_2(h){\times}\dim V_2(h)$, minors of $B$. 
In some cases the resulting formula is rather bulky and 
not easy to deal with, in some other {\sc Magma} was unable to finish the calculation. 
It turns out that {\sc Magma} checks much more easily that 
the restriction $B_U$ of $B$ to $U$ does not have the maximal rank, 
$\dim V_2(h)$, than it computes the greatest common divisor of minors. 
Thus
we gave up the idea of using $P$.

\subsection{Double cosets of Weyl groups.}\label{sub:dc} 

Suppose that we have two characteristics 
$h$ and $h'$ lying in the dominant chamber of $W_0$.
Parametrisation of $W_0h$ involves a certain numbering of 
simple roots $\alpha_1,\ldots,\alpha_l$, see Section~\ref{Weyl-tree-sec}.    
This numbering can be arbitrary. 
The stabiliser $W_{0,h'}$ is a Weyl subgroup generated 
by $s_i$ with $\alpha_i(h')=0$. 
Assume that $\rank W_{0,h'}=r$ and the simple roots orthogonal to 
$h'$ have numbers from $l{-}r{+}1$ to $l$. 

\begin{lem}\label{Weyl-double}
Keep the above notation and enumeration of simple roots.
Let ${\bf T}$ be a tree parametrising the orbit $W_0h$,
constructed according to the principles of Lemma~\ref{Weyl-tree}. 
Then the nodes $h$ and $s_i\hat h$ of ${\bf T}$ with $i\le l{-}r$ 
are in one-to-one correspondence with the double cosets 
$W_{0,h'}\backslash W_0/W_{0,h}$.   
\end{lem}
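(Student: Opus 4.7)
The plan is to translate the selection rule of the lemma into a condition on the coordinates of a node, and then recognise this condition as a choice of fundamental chamber for the action of $W_{0,h'}$ on $\h_0$.

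First I would spell out Snow's predecessor rule explicitly in coordinates. For any non-root node $\tilde h = \sum_j a_j h_j\in W_0h$ of length $k+1$, Lemma~\ref{Weyl-tree} furnishes a unique predecessor of the form $\hat h = s_i\tilde h$. The requirement $\ell(\hat h)=\ell(\tilde h)-1$ forces $\alpha_i(\tilde h)<0$, i.e.\ $a_i<0$, and the clause ``$a_j\ge 0$ for $j>i$'' then pins down $i$ as the \emph{largest} index with $a_i<0$. Consequently a non-root node $\tilde h$ is of the form $s_i\hat h$ with $i\le l-r$ precisely when $a_j\ge 0$ for every $j>l-r$. Since $h\in C_0$ automatically satisfies $a_j\ge 0$ for all $j$, the selected nodes of ${\bf T}$ are exactly
\[
\mathcal{S}=\bigl\{\tilde h\in W_0 h \;:\; \alpha_j(\tilde h)\ge 0 \text{ for all } j=l-r+1,\dots,l \bigr\}.
\]

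Next I would identify $\mathcal{S}$ with a set of $W_{0,h'}$-orbit representatives. By our numbering convention, $W_{0,h'}$ is generated by $s_{l-r+1},\dots,s_l$. Decomposing $\h_0$ orthogonally into the common kernel of $\alpha_{l-r+1},\dots,\alpha_l$ and the span of the corresponding coroots, the action of $W_{0,h'}$ on $\h_0$ becomes the standard Weyl-group action on its own Cartan, extended trivially on the complement. Hence $\{x\in\h_0:\alpha_j(x)\ge 0,\, j>l-r\}$ is a closed fundamental chamber for $W_{0,h'}$ and meets every $W_{0,h'}$-orbit in $\h_0$ in exactly one point. In particular, $\mathcal{S}$ is a complete system of representatives for the $W_{0,h'}$-orbits on $W_0h$.

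Finally, the bijection $W_0/W_{0,h}\to W_0 h$, $wW_{0,h}\mapsto wh$, intertwines the left-multiplication action of $W_{0,h'}$ on $W_0/W_{0,h}$ with the natural action on $W_0 h$; the resulting orbit space is canonically identified with the set of double cosets $W_{0,h'}\backslash W_0/W_{0,h}$. Combining this identification with the previous paragraph gives the desired bijection $\mathcal{S} \longleftrightarrow W_{0,h'}\backslash W_0/W_{0,h}$. I expect the only delicate step to be the first one: one must check carefully that ``the largest index $i$ with $a_i<0$'' is precisely what the tree-building algorithm singles out, so that the index $i$ in the statement of the lemma matches the coordinate condition cutting out a fundamental chamber. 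Once this dictionary is in place, the rest is just the familiar fact that a Weyl group acts simply transitively on its chambers.
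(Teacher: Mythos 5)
Your proof is correct and takes essentially the same route as the paper's (much terser) argument: identify the selected nodes as exactly the elements of $W_0h$ lying in the closed dominant chamber of $W_{0,h'}$, which is a strict fundamental domain, and then match $W_{0,h'}$-orbits on $W_0h$ with double cosets via $wW_{0,h}\mapsto wh$. The only (cosmetic) difference is that the paper obtains surjectivity by remarking that an edge $s_i$ with $i>l-r$ lies in $W_{0,h'}$ and hence does not produce a new double coset, whereas you get it directly from the fundamental-domain property; your careful verification that the predecessor rule singles out the largest index $i$ with $a_i<0$ is exactly the point the paper leaves implicit.
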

\begin{proof}
First note that $h$ or $s_i\hat h$ with $i\le l{-}r$
lies in the dominant chamber of $W_{0,h'}$. 
Secondly, an element $s_i\hat h$ with $l{-}r< i$ does not bring a new double 
coset, because here $s_i$ lies in $W_{0,h'}$. 
\end{proof}

This, of course, is not a very effective way for listing the double cosets
as the whole orbit $W_0h$ has to be constructed. 
However, if some time consuming calculation has to be performed for 
representatives of double cosets, such a treatment may be useful. 

In our situation, collecting subspaces $U(wh)$ turned out to be much
more effective than refining the Weyl-group tree. The explanation is that
one and the same $U=U(wh)$ arises for many different elements $wh\in W_0h$. 

\subsection{Other algebro-geometric methods.}\label{sub:ag}  
Our algorithm is designed for $\theta$-groups and works quite well.
There are known some other, more general, approaches, which unfortunately
have a rather small range of application.

To begin with, consider a linear action  $a: Q\times V\to V$
of an affine algebraic group $Q$ on a vector 
space $V$. For $x\in V$, the map $g\mapsto gx$ from $Q$ to $V$ is regular, i.e., given by polynomials. 
Let $I(Qx)\lhd\C[V]$ be the ideal of $Qx$, i.e., a set of all
polinomials vanishing on $Qx$.
In \cite{clo}, algorithmic methods are described for computing 
generators of the vanishing ideal 
of the image of a regular map. In particular, this can be applied to $I(Qx)$.
Here $I(Qx)$ equals $(a^*)^{-1}(I(Q{\times}\{x\}))$, where
$I(Q{\times}\{x\})\lhd\C[\GL(V){\times}\{x\}]$ is the defining ideal of 
the product of the image of $Q$ in $\GL(V)$ and the point $x$.
Once the generators of $I(Qx)$ are known, it is  straightforward to decide whether a point 
(and hence the orbit of that point) lies in $\overline{Qx}$.

In order to use the algorithms for getting generators of $I(Qx)$, we need 
as input the polynomials defining $Q$ as a subgroup of $\GL(V)$. 
In our setting, $V=\g_1$ and
$Q$ is the image in $\GL(V)$ of $G_0$, acting on $V$. In order to get equations for
$Q$, methods from \cite{graeqns} can be used. However, both the algorithm for 
obtaining the polynomials defining $Q$ and the one for
computing $I(Qx)$ 
heavily rely on Gr\"obner basis computations. These are extremely time consuming. 
For this reason this method is only applicable to very small examples (e.g., when the
semisimple part of $\g_0$ is of type $A_1$, and $\g_1$ is of dimension 5).

Now let $G\subset \GL[V]$ be reductive.  Then in the above considerations
$G$ can be replaced by its big open cell, $BwB$, where $w\in W$ is the longest 
element in the Weyl group $W$ of $G$ and $B\subset G$ is a Borel subgroup.  
In \cite{po}, V.L.\,Popov suggested an algorithm, based on this observation, 
for deciding whether $Gy$ lies in $\overline{Gx}$. That 
algorithm uses a system of linear equations in $n\binom{n+2d-3}{n-1}$
variables, where $n=\dim V$ and $d$ is the degree of $G$ as 
a subvariety of ${\rm L}(V)$.
For an irreducible $5$-dimensional representation of $\SL_2(\C)$, the number of 
variables equals 56794400. 
This makes it difficult, if not impossible, to use the
algorithm for practical
computations.

\section{Examples}\label{ex}

In this section we show the output of our programs on several examples. 
Here we describe the examples; the next section contains the Hasse diagrams
that we computed with the algorithm, as well as tables giving the characteristics
of the nilpotent orbits.

\subsection{Symmetric pairs}\label{symm}

As was mentioned in the Introduction, the order two case, or, in other words,  the symmetric case, was studied by Djokovi{\' c}, because of its relationship with simple real Lie algebras. We have checked all symmetric pairs arising from the 
exceptional Lie algebras. The result is that Djokovi{\' c} diagrams are basically correct, 
if one takes into account the necessary alteration that he found himself,
\cite{dokoclos6}, \cite{dokoclos9}. Our calculations confirm these corrections.
Apart from this, there are two inclusions missing 
for one automorphism in type $E_8$. For the involution in question, $\g_0$ is of type 
$D_8$ and $\g_1$ is a half-spin representation, the corresponding Kac diagram 
is the third one in Table~\ref{tab:time}. 

According to our calculations, in Table~2 of \cite{dokoclos8}, $59 \to 53$,
and $95 \to 92$ should be added (notation as in the mentioned paper: $59 \to 53$ means that
orbit number $53$ is contained in the closure of orbit number $59$). 
This then results in several other changes. For example,  
\cite[Table 2]{dokoclos8} states ``$99 \to 92, 94, 95$'' and $92$
has to be removed from here, because the orbit number $92$ does not give 
rise to an irreducible component of the boundary 
$\partial{\mathcal O}_{99}$. 
 
Neither $(59,53)$ nor $(95,92)$ appears in the list 
\cite[Table~5]{dokoclos8} of the critical pairs, i.e.,  pairs
$(a,b)$ such that the non-inclusion $a\not\to b$ is (or has to be) proved.  
In both cases, our program immediately found that the space 
$U=V_2(h')\cap V_{\ge 2}(h)$ contains a point of ${\mathcal O}'$.\footnote{This can be even verified by hand, would someone wish to do so.} 

\subsection{Trivectors of a nine dimensional space}

In \cite{elashvin}, the orbits of $\SL_9(\C)$ acting on $\wedge^3(\C^9)$ were
obtained. This is known as the classification of the trivectors of a nine dimensional
space. The orbits were obtained by realising this representation as a $\theta$-representation.
Here $\theta$ is an automorphism of order $3$ of the Lie algebra of type $E_8$, with
Kac diagram

%

\setlength{\unitlength}{1pt}
\begin{picture}(150,40)

  \put(3,10){\circle{6}}
  \put(23,10){\circle{6}}
  \put(43,10){\circle{6}}
  \put(63,10){\circle{6}}
  \put(83,10){\circle{6}}
  \put(43,30){\circle*{6}}
  \put(6,10){\line(1,0){14}}
  \put(26,10){\line(1,0){14}}
  \put(46,10){\line(1,0){14}}
  \put(66,10){\line(1,0){14}}
  \put(43,13){\line(0,1){14}}
\put(103,10){\circle{6}}
\put(86,10){\line(1,0){14}}
\put(123,10){\circle{6}}
\put(106,10){\line(1,0){14}}
\put(143,10){\circle{6}}
\put(126,10){\line(1,0){14}}
\end{picture} 

\noindent
We have that $\g_0\cong \ssl_9(\C)$ and $\g_1\cong \wedge^3(\C^9)$.  

In Table \ref{tab:3vecchars} we list the characteristics of the (nonzero) nilpotent
orbits. A characteristic $h$ is given by the values of $\alpha_i(h)$, where 
$\{\alpha_1,\ldots,\alpha_8\}$ is a basis of the root system of $\g_0$. Moreover, all the
characteristics $h$ lie in the dominant Weyl chamber with respect to this basis.

Figures \ref{fig:3vec1}, \ref{fig:3vec2} contain, respectively, the top half
and the bottom half of the Hasse diagram. 

\subsection{The classification of metabelian Lie algebras}\label{sub:2step}

A finite-dimensional Lie algebra $L$ is said to be {\it metabelian} 
(or two-step nilpotent), 
if $[L,[L,L]]=0$.
In \cite{gatim}, Galitski and Timashev 
described the $G_0$-orbits for the two particular $\theta$-representations 
in order to obtain the classification of the metabelian Lie algebras of dimensions up to 9 (over algebraically closed fields of characteristic 0). 
With our algorithm we have 
computed the closure diagram of the nilpotent orbits in both these cases.

For the first $\theta$-group, $\theta$ is
an automorphism of order $5$ of the Lie algebra of type $E_8$, with Kac diagram

\setlength{\unitlength}{1pt}
\begin{picture}(150,40)

  \put(3,10){\circle{6}}
  \put(23,10){\circle{6}}
  \put(43,10){\circle{6}}
  \put(63,10){\circle*{6}}
  \put(83,10){\circle{6}}
  \put(43,30){\circle{6}}
  \put(6,10){\line(1,0){14}}
  \put(26,10){\line(1,0){14}}
  \put(46,10){\line(1,0){14}}
  \put(66,10){\line(1,0){14}}
  \put(43,13){\line(0,1){14}}
\put(103,10){\circle{6}}
\put(86,10){\line(1,0){14}}
\put(123,10){\circle{6}}
\put(106,10){\line(1,0){14}}
\put(143,10){\circle{6}}
\put(126,10){\line(1,0){14}}
\end{picture} 

\noindent
Characteristics of the nilpotent orbits are given in Table \ref{tab:5ordchars}. 
The closure diagram is displayed in Figures \ref{fig:5ord1},
\ref{fig:5ord2}.

For the second $\theta$-group, $\theta$ is the
automorphism of order $3$ of the Lie algebra of type $E_7$, with Kac diagram

\setlength{\unitlength}{1pt}
\begin{picture}(150,40)

  \put(3,10){\circle{6}}
  \put(23,10){\circle{6}}
  \put(43,10){\circle*{6}}
  \put(63,10){\circle{6}}
  \put(83,10){\circle{6}}
  \put(63,30){\circle{6}}
  \put(6,10){\line(1,0){14}}
  \put(26,10){\line(1,0){14}}
  \put(46,10){\line(1,0){14}}
  \put(66,10){\line(1,0){14}}
  \put(63,13){\line(0,1){14}}
\put(103,10){\circle{6}}
\put(86,10){\line(1,0){14}}
\put(123,10){\circle{6}}
\put(106,10){\line(1,0){14}}
\end{picture} 

\noindent
Characteristics of 
the nilpotent orbits are given in Table~\ref{tab:charsE7}. 
The closure diagram is displayed in Figures~\ref{fig:E7_3},
\ref{fig:E7_3-2}.

As was already mentioned, in \cite{gatim} the 
orbits of these particular
$\theta$-groups are used for a classification of metabelian Lie algebras.
Every orbit corresponds to one such Lie algebra (up to isomorphism).
Every metabelian Lie algebra $L$ has a signature, that is a pair $(m,n)$
where $m= \dim L/[L,L]$ and $n= \dim [L,L]$. 
Let $Z\lhd L$ be a maximal Abelian ideal such that
$Z\cap [L,L]=0$. Then $L\cong L/Z\oplus Z$. 
In the closure diagrams we indicate the signature of $L/Z$ as it was 
computed in \cite{gatim}. 
Mostly this is done by writing the label of the node in a particular font, 
according to Tables~\ref{tab:fonts1} (for Figures~\ref{fig:5ord1}, \ref{fig:5ord2})
and \ref{tab:fonts2} (for Figures~\ref{fig:E7_3}, \ref{fig:E7_3-2}). 
For the orbits corresponding to signatures not present in these tables we have
put the signature in the diagram, next to the node.

\begin{table}[htb]
\begin{tabular}{|l|c|c|}
\hline
font & example & signature\\
\hline
roman & 10 & (5,5)\\
bold face & {\bf 14} & (5,4)\\
italics & {\it{\color{blue}70}} & (5,3) \\
underline & {\underline{82}} & (4,4)\\
typewriter &  {\tt 92} & (4,3)\\
\hline
\end{tabular}
\\[.5ex]
\caption{Fonts for the Hasse diagram in Figures \ref{fig:5ord1},
\ref{fig:5ord2}.}\label{tab:fonts1}
\end{table}

\begin{table}[htb]
\begin{tabular}{|l|c|c|}
\hline
font & example & signature\\
\hline
roman & 10 & (6,3)\\
bold face & {\bf 35} & (6,2)\\
italics & {\it{\color{blue}41}} & (5,3) \\
typewriter &  {\tt 68} & (4,3)\\
overline & $\overline{64}$ & (4,2)\\
\hline
\end{tabular}
\\[.5ex]
\caption{Fonts for the Hasse diagram in Figures \ref{fig:E7_3},
\ref{fig:E7_3-2}.}\label{tab:fonts2}
\end{table}

Taking the closure of a given nilpotent 
orbit corresponds to the degeneration of the encoded 
two-step nilpotent Lie algebra. Let us explain this.    
Let $W$ be a vector space (over $\C$). Then a Lie bracket on $W$ can be 
seen as an element of $\Hom( \wedge^2 W, W)$. 
The group $\GL(W)$ acts on $\Hom( \wedge^2 W, W)$,
and the orbits of this action are in one-to-one correspondence with the isomorphism classes of Lie algebra structures on $W$.
Let $\lambda,\mu \in \Hom( \wedge^2 W, W)$;
if $\mu$ is contained in the closure of the $\GL(W)$-orbit of $\lambda$, then $\mu$
is said to be a {\em degeneration} of $\lambda$. We refer to \cite{gruno} for an
introduction into this concept. 

In relation to the variety of metabelian Lie algebras one
considers two vector spaces $U$ and $V$. A metabelian Lie bracket on $U\oplus V$ is viewed
as an element of $\Hom( \wedge^2 U, V )$.  Let $L$ be the Lie algebra defined by
such an element; then $[L,L] \subset V$. The group $\GL(U){\times}\GL(V)$ acts on 
$\Hom( \wedge^2 U, V )$.  Two metabelian Lie algebra structures on $U\oplus V$ are isomorphic 
if and only if the corresponding elements of $\Hom( \wedge^2 U, V )$ lie in the same 
$\GL(U){\times}\GL(V)$-orbit. 

Write $m=\dim U$, $n=\dim V$, and let $W$ be a vector space of dimension $m+n$.
If $\lambda \in \Hom( \wedge^2 W, W)$ is a metabelian Lie bracket, defining a Lie algebra
$L$ on $W$ of a signature $(m,n)$, then by setting $V=[L,L]$ and taking $U$ to be a complement of
$V$ in $W$, we get an element of $\Hom( \wedge^2 U, V )$. This construction preserves
isomorphism, i.e., a $\GL(W)$-orbit of metabelian Lie brackets in 
$\Hom( \wedge^2 U, V )$
is mapped to a $\GL(U){\times}\GL(V)$-orbit in $\Hom( \wedge^2 U, V )$. 
Thus metabelian Lie algebras of signature $(m,n)$ are classified 
by the $\GL(U){\times}\GL(V)$-orbits in $\Hom(\wedge^2 U, V)$.
Moreover, degenerations of these Lie algebras are given by the 
orbits closures.

Two instances of $\theta$-groups considered in \cite{gatim} correspond to
the signatures $(5,5)$ and $(6,3)$. Strictly speaking,
the group $G_0$ is semisimple in both cases. Therefore, if $0\notin\overline{G_0x}$
with $x\in\gt g_1$, then the one-parameter family 
of $G_0$-orbits $G_0(ax)$ with $a\in\C^{^\times}$ gives only one isomorphism class 
of metabelian Lie algebras. For the nilpotent $G_0$-orbits, there is no 
difference between $G_0x$ and $(\GL(U){\times}\GL(V))x$.

Note that the quotients $L/Z$ with signatures being smaller than
or equal to $(5,3)$ appear in both $\theta$-groups. Therefore, the lower parts of both
Hasse diagrams (Figures~\ref{fig:5ord1}, \ref{fig:5ord2}; \ref{fig:E7_3}, \ref{fig:E7_3-2})
are the same.     

The affine  variety of metabelian Lie algebras structures with 
signature $(5,5)$ on a ten-dimensional vector space $W$  is irreducible, because 
of the equivalence with $\theta$-group orbits. According to \cite{gatim},
it has a one-parameter family of the maximal $\GL(W)$-orbits. 
The same holds for the second case, where signatures are $(6,3)$ and $\dim W=9$,
only there is a two-parameter family of the maximal orbits \cite{gatim}.
Since isomorphism classes of  metabelian Lie algebras with smaller signatures 
are parametrised by nilpotent orbits, there are only finitely many of them.  
In addition,
our diagrams show that for each signature $(m,n)$, where either
$m\leq 5,\,n\leq 5$ and $(m,n)\ne (5,5)$, or 
$m\leq 6,\,n\leq 3$ and $(m,n)\ne (6,3)$,
the affine  variety of metabelian Lie algebra structures with 
signature $(m,n)$ on  $W$ with $\dim W=m+n$ is irreducible.  
For example, all metabelian  Lie algebras with signature $(5,4)$ 
are degenerations of $L/Z$ (including $L/Z$ itself), where $L$ corresponds to the orbit 
number 14 of the first $\theta$-group (in $E_8$), up to isomorphism, see Figure~\ref{fig:5ord1}. 
The metabelian  Lie algebras with signature $(5,3)$ 
are degenerations of $L/Z$, where $L$ is encoded by the orbit number 62 of the first $\theta$-group as well as by  the orbit number 27 of the second $\theta$-group, see Figures~\ref{fig:5ord1}, \ref{fig:E7_3}.


\def\cprime{$'$} \def\cprime{$'$} \def\Dbar{\leavevmode\lower.6ex\hbox to
  0pt{\hskip-.23ex \accent"16\hss}D} \def\cprime{$'$} \def\cprime{$'$}
  \def\cprime{$'$} \def\cprime{$'$}


\section*{Appendix: diagrams and tables}

\unitlength=1cm
\begin{figure}

\begin{graph}(10,21)
\roundnode{n1}(4.5,20)\nodetext{n1}{1}
\roundnode{n2}(4.5,19)\nodetext{n2}{2}
\roundnode{n3}(4,18)\nodetext{n3}{3}
\roundnode{n4}(5,18)\nodetext{n4}{4}
\roundnode{n5}(4,17)\nodetext{n5}{5}
\roundnode{n6}(5,17)\nodetext{n6}{6}
\roundnode{n7}(5,16)\nodetext{n7}{7}
\roundnode{n8}(4,16)\nodetext{n8}{8}
\roundnode{n9}(3,16)\nodetext{n9}{9}
\roundnode{n10}(6,15)\nodetext{n10}{10}
\roundnode{n11}(4,15)\nodetext{n11}{11}
\roundnode{n12}(5,15)\nodetext{n12}{12}
\roundnode{n13}(3,15)\nodetext{n13}{13}
\roundnode{n14}(5,14)\nodetext{n14}{14}
\roundnode{n15}(4,14)\nodetext{n15}{15}
\roundnode{n16}(3,14)\nodetext{n16}{16}
\roundnode{n17}(7,13)\nodetext{n17}{17}
\roundnode{n18}(6,13)\nodetext{n18}{18}
\roundnode{n19}(4,13)\nodetext{n19}{19}
\roundnode{n20}(5,13)\nodetext{n20}{20}
\roundnode{n24}(7,12)\nodetext{n24}{24}
\roundnode{n21}(3,12)\nodetext{n21}{21}
\roundnode{n22}(5,12)\nodetext{n22}{22}
\roundnode{n23}(6,12)\nodetext{n23}{23}
\roundnode{n25}(4,12)\nodetext{n25}{25}
\roundnode{n26}(4,11)\nodetext{n26}{26}
\roundnode{n27}(7,11)\nodetext{n27}{27}
\roundnode{n28}(5,11)\nodetext{n28}{28}
\roundnode{n29}(3,11)\nodetext{n29}{29}
\roundnode{n30}(6,11)\nodetext{n30}{30}
\roundnode{n31}(4,10)\nodetext{n31}{31}
\roundnode{n32}(5,10)\nodetext{n32}{32}
\roundnode{n33}(6,10)\nodetext{n33}{33}
\roundnode{n34}(7,10)\nodetext{n34}{34}

\roundnode{n35}(4,9)\nodetext{n35}{35}
\roundnode{n36}(5,9)\nodetext{n36}{36}
\roundnode{n37}(6,9)\nodetext{n37}{37}
\roundnode{n38}(4,8)\nodetext{n38}{38}
\roundnode{n39}(5,8)\nodetext{n39}{39}
\roundnode{n40}(6,8)\nodetext{n40}{40}

\roundnode{n41}(3,7)\nodetext{n41}{41}
\roundnode{n42}(8,7)\nodetext{n42}{42}
\roundnode{n43}(4,7)\nodetext{n43}{43}
\roundnode{n44}(6,7)\nodetext{n44}{44}
\roundnode{n45}(5,7)\nodetext{n45}{45}

\roundnode{n46}(5,6)\nodetext{n46}{46}
\roundnode{n47}(3,6)\nodetext{n47}{47}
\roundnode{n48}(4,6)\nodetext{n48}{48}
\roundnode{n49}(7,6)\nodetext{n49}{49}
\roundnode{n50}(6,6)\nodetext{n50}{50}

\roundnode{n51}(6,5)\nodetext{n51}{51}
\roundnode{n52}(5,5)\nodetext{n52}{52}
\roundnode{n53}(4,5)\nodetext{n53}{53}

\roundnode{n54}(3,4)\nodetext{n54}{54}
\roundnode{n55}(4,4)\nodetext{n55}{55}
\roundnode{n56}(5,4)\nodetext{n56}{56}
\roundnode{n57}(7,4)\nodetext{n57}{57}

\roundnode{n58}(3,3)\nodetext{n58}{58}
\roundnode{n59}(6.8,3)\nodetext{n59}{59}
\roundnode{n60}(4,3)\nodetext{n60}{60}
\roundnode{n61}(5,3)\nodetext{n61}{61}
\roundnode{n62}(7.5,3)\nodetext{n62}{62}

\roundnode{n63}(6,2)\nodetext{n63}{63}
\roundnode{n64}(5,2)\nodetext{n64}{64}

\roundnode{n65}(3,1)\nodetext{n65}{65}
\roundnode{n66}(6,1)\nodetext{n66}{66}
\roundnode{n67}(4,1)\nodetext{n67}{67}
\roundnode{n68}(7,1)\nodetext{n68}{68}

\roundnode{n69}(5,0)\nodetext{n69}{69}
\roundnode{n70}(7,0)\nodetext{n70}{70}

\edge{n1}{n2}
\edge{n2}{n3}
\edge{n2}{n4}
\edge{n3}{n5}
\edge{n3}{n6}
\edge{n4}{n5}
\edge{n4}{n6}
\edge{n5}{n9}
\edge{n5}{n8}
\edge{n5}{n7}
\edge{n6}{n8}
\edge{n6}{n7}
\edge{n9}{n13}
\edge{n8}{n13}
\edge{n8}{n11}
\edge{n8}{n12}
\edge{n7}{n12}
\edge{n7}{n10}
\edge{n7}{n11}

\edge{n13}{n16}
\edge{n13}{n15}
\edge{n11}{n16}
\edge{n11}{n14}
\edge{n12}{n15}
\edge{n12}{n14}
\edge{n12}{n17}
\edge{n10}{n17}
\edge{n10}{n18}
\edge{n10}{n20}
\edge{n16}{n20}
\edge{n16}{n19}
\edge{n15}{n19}
\edge{n15}{n24}
\edge{n14}{n19}
\edge{n14}{n18}

\edge{n19}{n21}
\edge{n19}{n25}
\edge{n19}{n22}
\edge{n20}{n22}
\edge{n18}{n22}
\edge{n18}{n23}
\edge{n17}{n22}
\edge{n17}{n23}
\edge{n17}{n24}
\edge{n20}{n26}
\edge{n29}{n33}

\edge{n21}{n29}
\edge{n21}{n28}
\edge{n25}{n29}
\edge{n25}{n30}
\edge{n22}{n27}
\edge{n22}{n28}
\edge{n22}{n30}
\edge{n24}{n28}
\edge{n24}{n30}
\edge{n23}{n27}
\edge{n23}{n30}

\edge{n26}{n31}
\edge{n28}{n32}
\edge{n28}{n33}
\edge{n28}{n34}
\edge{n30}{n32}
\edge{n30}{n33}
\edge{n27}{n31}
\edge{n27}{n32}
\edge{n27}{n33}
\edge{n27}{n34}

\edge{n31}{n35}
\edge{n31}{n41}
\edge{n32}{n35}
\edge{n32}{n36}
\edge{n32}{n37}
\edge{n33}{n36}
\edge{n33}{n37}
\edge{n34}{n37}
\edge{n35}{n47}
\edge{n35}{n38}
\edge{n35}{n39}
\edge{n36}{n38}
\edge{n36}{n40}
\edge{n36}{n42}
\edge{n36}{n44}
\edge{n37}{n39}
\edge{n37}{n40}

\edge{n38}{n43}
\edge{n38}{n46}
\edge{n39}{n43}
\edge{n39}{n45}
\edge{n40}{n43}
\edge{n40}{n45}

\edge{n41}{n47}
\edge{n41}{n46}
\edge{n43}{n48}
\edge{n43}{n52}
\edge{n45}{n48}
\edge{n45}{n55}
\edge{n45}{n50}
\edge{n44}{n50}
\edge{n44}{n57}

\edge{n42}{n49}
\edge{n42}{n46}
\edge{n40}{n49}

\edge{n46}{n52}
\edge{n47}{n52}
\edge{n47}{n55}
\edge{n48}{n53}
\edge{n48}{n54}
\edge{n49}{n51}
\edge{n49}{n52}
\edge{n50}{n51}
\edge{n50}{n53}
\edge{n51}{n56}
\edge{n52}{n56}
\edge{n53}{n56}

\edge{n54}{n58}
\edge{n54}{n60}
\edge{n55}{n60}
\edge{n56}{n60}
\edge{n55}{n61}
\edge{n51}{n61}
\edge{n51}{n59}
\edge{n53}{n59}
\edge{n57}{n62}
\edge{n53}{n62}

\edge{n60}{n64}
\edge{n61}{n64}
\edge{n59}{n64}
\edge{n56}{n63}
\edge{n59}{n63}
\edge{n58}{n65}
\edge{n60}{n65}
\edge{n61}{n67}

\edge{n64}{n66}
\edge{n63}{n66}
\edge{n63}{n68}
\edge{n62}{n68}
\edge{n67}{n69}
\edge{n66}{n69}
\edge{n66}{n70}
\edge{n68}{n70}

\freetext(0,20.5){Dimension}
\freetext(0,20){80}
\freetext(0,19){79}
\freetext(0,18){78}
\freetext(0,17){77}
\freetext(0,16){76}
\freetext(0,15){75}
\freetext(0,14){74}
\freetext(0,13){73}
\freetext(0,12){72}
\freetext(0,11){71}
\freetext(0,10){70}
\freetext(0,9){69}
\freetext(0,8){68}
\freetext(0,7){67}
\freetext(0,6){66}
\freetext(0,5){65}
\freetext(0,4){64}
\freetext(0,3){63}
\freetext(0,2){62}
\freetext(0,1){61}
\freetext(0,0){60}

\end{graph}
\caption{Hasse diagram of nilpotent orbits in the case of trivectors; top half}\label{fig:3vec1}
\end{figure}

\begin{figure}

\begin{graph}(10,21)

\roundnode{n63}(6,20)\nodetext{n63}{63}
\roundnode{n64}(5,20)\nodetext{n64}{64}

\roundnode{n65}(3,19)\nodetext{n65}{65}
\roundnode{n66}(6,19)\nodetext{n66}{66}
\roundnode{n67}(4,19)\nodetext{n67}{67}
\roundnode{n68}(7,19)\nodetext{n68}{68}

\roundnode{n69}(5,18)\nodetext{n69}{69}
\roundnode{n70}(7,18)\nodetext{n70}{70}

\roundnode{n71}(4,17)\nodetext{n71}{71}

\roundnode{n72}(5,16)\nodetext{n72}{72}
\roundnode{n73}(6,16)\nodetext{n73}{73}
\roundnode{n74}(7,16)\nodetext{n74}{74}

\roundnode{n75}(4,15)\nodetext{n75}{75}

\roundnode{n76}(6,14)\nodetext{n76}{76}
\roundnode{n77}(3,14)\nodetext{n77}{77}
\roundnode{n78}(7,14)\nodetext{n78}{78}
\roundnode{n79}(4,14)\nodetext{n79}{79}

\roundnode{n80}(7,13)\nodetext{n80}{80}
\roundnode{n81}(5,13)\nodetext{n81}{81}
\roundnode{n82}(6,13)\nodetext{n82}{82}

\roundnode{n83}(3,12)\nodetext{n83}{83}
\roundnode{n84}(6,12)\nodetext{n84}{84}

\roundnode{n85}(6,11)\nodetext{n85}{85}

\roundnode{n86}(7,10)\nodetext{n86}{86}
\roundnode{n87}(5,10)\nodetext{n87}{87}

\roundnode{n88}(5,9)\nodetext{n88}{88}

\roundnode{n89}(8.8,8)\nodetext{n89}{89}
\roundnode{n90}(6,8)\nodetext{n90}{90}
\roundnode{n91}(1.5,8)\nodetext{n91}{91}

\roundnode{n92}(5,7)\nodetext{n92}{92}
\roundnode{n93}(6,7)\nodetext{n93}{93}

\roundnode{n94}(5.5,6)\nodetext{n94}{94}

\roundnode{n95}(5.5,5.1)\nodetext{n95}{95}

\roundnode{n96}(4.5,4.2)\nodetext{n96}{96}

\roundnode{n97}(5.5,3.3)\nodetext{n97}{97}

\roundnode{n98}(8,2.4)\nodetext{n98}{98}

\roundnode{n99}(6.5,1.5)\nodetext{n99}{99}

\roundnode{n100}(5.5,0.8)\nodetext{n100}{100}

\roundnode{n101}(5.5,0)\nodetext{n101}{101}

\edge{n64}{n66}
\edge{n63}{n66}
\edge{n63}{n68}
\edge{n67}{n69}
\edge{n66}{n69}
\edge{n66}{n70}
\edge{n68}{n70}

\edge{n65}{n71}
\edge{n64}{n71}
\edge{n71}{n72}
\edge{n69}{n72}
\edge{n69}{n73}
\edge{n69}{n74}
\edge{n70}{n74}
\edge{n71}{n75}
\edge{n66}{n75}

\edge{n67}{n77}
\edge{n75}{n79}
\edge{n70}{n79}
\edge{n75}{n76}
\edge{n72}{n76}
\edge{n73}{n78}
\edge{n74}{n78}

\edge{n77}{n81}
\edge{n73}{n81}
\edge{n77}{n82}
\edge{n79}{n82}
\edge{n76}{n82}
\edge{n74}{n82}
\edge{n73}{n80}
\edge{n76}{n80}
\edge{n78}{n84}
\edge{n81}{n84}
\edge{n82}{n84}
\edge{n80}{n84}
\edge{n73}{n83}

\edge{n83}{n85}
\edge{n81}{n85}
\edge{n80}{n85}
\edge{n76}{n86}
\edge{n85}{n87}
\edge{n84}{n87}
\edge{n87}{n88}
\edge{n78}{n89}
\edge{n86}{n90}
\edge{n80}{n90}
\edge{n86}{n91}
\edge{n82}{n91}

\edge{n88}{n92}
\edge{n90}{n92}
\edge{n91}{n92}
\edge{n88}{n93}
\edge{n89}{n93}
\edge{n92}{n94}
\edge{n93}{n94}
\edge{n94}{n95}
\edge{n94}{n96}
\edge{n95}{n97}
\edge{n96}{n97}

\edge{n90}{n98}
\edge{n95}{n99}
\edge{n98}{n99}
\edge{n97}{n100}
\edge{n99}{n100}
\edge{n100}{n101}

\freetext(0,20.5){Dimension}
\freetext(0,20){62}
\freetext(0,19){61}
\freetext(0,18){60}
\freetext(0,17){59}
\freetext(0,16){58}
\freetext(0,15){57}
\freetext(0,14){56}
\freetext(0,13){55}
\freetext(0,12){54}
\freetext(0,11){53}
\freetext(0,10){52}
\freetext(0,9){51}
\freetext(0,8){49}
\freetext(0,7){48}
\freetext(0,6){45}
\freetext(0,5.1){42}
\freetext(0,4.2){38}
\freetext(0,3.3){37}
\freetext(0,2.4){36}
\freetext(0,1.5){35}
\freetext(0,0.8){30}
\freetext(0,0){19}

\end{graph}
\caption{Hasse diagram of nilpotent orbits in the case of trivectors; bottom half}
\label{fig:3vec2}
\end{figure}

\begin{longtable}{|c|c|c|c|}
\caption{Characteristics of the nilpotent orbits in the case of 3-vectors.}\label{tab:3vecchars}
\endfirsthead
\hline
\multicolumn{4}{|l|}{\small\slshape Characteristics.} \\
\hline 
\endhead
\hline 
\endfoot
\endlastfoot

\hline

no. &  characteristic & no. & characterisitc \\
\hline

 1 & 6 6 6 6 6 6 6 12 &  2 & 6 6 6 0 6 6 6 6 \\ 
 3 &  6 6 6 0 6 0 6 6 & 4 & 6 0 6 0 6 6 0 6 \\ 
 5 &  0 6 0 6 0 6 0 6 & 6 & 6 1 5 1 5 6 1 5 \\ 
 7 &  6 0 6 0 6 0 0 6 & 8 & 0 6 0 0 6 0 6 0 \\ 
 9 &  0 0 6 0 0 6 0 6 & 10& 6 1 5 1 5 0 1 5 \\ 
 11&  1 5 1 1 4 1 5 1 & 12& 2 2 2 2 2 4 2 2 \\ 
 13&  0 1 5 0 1 5 1 5 & 14& 2 2 2 2 2 2 2 2 \\ 
 15&  1 1 4 1 1 5 1 4 & 16& 6 0 0 0 6 0 0 6 \\ 
 17&  3 0 3 3 0 6 0 3 & 18& 0 6 0 0 0 6 0 0 \\ 
 19&  2 2 0 2 2 2 2 2 & 20& 6 0 1 0 5 0 1 5 \\ 
 21&  0 3 0 3 0 3 3 0 & 22& 1 4 0 1 1 4 1 1 \\ 
 23&  1 4 1 1 1 3 1 1 & 24& 2 0 4 2 0 6 0 4 \\ 
 25&  0 0 6 0 0 0 0 6 & 26& 6 1 0 1 4 1 0 5 \\ 
 27&  3 0 3 0 3 0 3 0 & 28& 0 4 0 2 0 4 2 0 \\ 
 29&  1 1 2 1 1 1 1 4 & 30& 0 1 5 0 0 1 0 5 \\ 
 31&  2 2 2 2 0 2 0 2 & 32& 1 0 5 0 1 0 1 4 \\ 
 33&  2 0 2 2 0 2 0 4 & 34& 0 0 0 0 6 0 0 0 \\ 
 35&  1 1 4 1 0 1 0 4 & 36& 2 0 2 0 2 0 2 2 \\ 
 37&  0 0 1 0 5 0 0 1 & 38& 2 1 1 1 1 1 1 2 \\ 
 39&  0 1 0 1 4 0 1 0 & 40& 1 0 1 0 4 0 1 1 \\ 
 41&  3 0 3 3 0 0 0 3 & 42& 3 0 0 0 3 0 3 0 \\ 
 43&  1 1 0 1 3 1 0 1 & 44& 0 3 0 0 0 3 0 3 \\ 
 45&  1 0 1 1 2 1 1 1 & 46& 3 0 1 0 2 1 2 0 \\ 
 47&  2 0 4 2 0 0 0 4 & 48& 1 1 1 1 1 1 1 1 \\ 
 49&  2 0 0 0 4 0 2 0 & 50& 0 2 0 0 2 2 0 2 \\ 
 51&  1 1 0 1 1 2 1 1 & 52& 2 0 1 0 3 1 1 0 \\ 
 53&  1 1 1 1 0 1 1 2 & 54& 0 3 0 0 3 0 0 0 \\ 
 55&  0 0 0 3 0 3 0 0 & 56& 2 0 2 0 0 2 0 2 \\ 
 57&  0 0 0 0 0 0 0 6 & 58& 0 4 0 0 2 0 0 0 \\ 
 59&  0 0 3 0 0 0 3 0 & 60& 1 1 1 1 1 0 1 1 \\ 
 61&  0 1 0 2 0 3 0 1 & 62& 0 0 0 0 1 0 0 5 \\ 
 63&  1 1 1 0 1 0 2 1 & 64& 0 1 2 0 1 0 2 0 \\ 
 65&  1 2 1 1 0 0 1 1 & 66& 0 2 0 0 2 0 0 2 \\ 
 67&  1 0 1 1 0 3 1 0 & 68& 0 0 1 0 0 0 1 4 \\ 
 69&  1 1 0 1 1 0 1 1 & 70& 0 1 0 0 1 0 0 4 \\ 
 71&  0 2 2 0 0 0 2 0 & 72& 2 0 1 0 1 1 0 1 \\ 
 73&  1 0 1 1 0 1 1 0 & 74& 1 0 0 1 0 0 1 3 \\ 
 75&  0 3 0 0 0 0 0 3 & 76& 1 2 0 0 0 1 0 2 \\ 
 77&  0 0 2 0 0 4 0 0 & 78& 0 1 0 0 1 0 1 2 \\ 
 79&  0 2 0 0 0 0 0 4 & 80& 1 1 0 1 0 1 0 1 \\ 
 81&  0 0 2 0 0 2 0 0 & 82& 1 1 0 0 0 1 0 3 \\ 
 83&  0 0 0 3 0 0 0 0 & 84& 1 0 0 1 0 1 0 2 \\ 
 85&  0 1 0 2 0 0 0 1 & 86& 2 1 0 1 0 0 0 2 \\ 
 87&  0 0 0 2 0 0 0 2 & 88& 0 1 0 1 0 0 1 1 \\ 
 89&  0 0 0 0 0 0 3 0 & 90& 2 0 0 1 0 1 0 0 \\ 
 91&  2 0 0 1 0 0 0 3 & 92& 1 0 1 0 0 1 0 1 \\ 
 93&  0 0 0 1 0 0 2 0 & 94& 0 1 0 0 0 1 1 0 \\ 
 95&  1 0 0 1 0 0 1 0 & 96& 0 0 0 0 0 2 0 0 \\ 
 97&  0 0 1 0 0 1 0 0 & 98& 3 0 0 0 0 0 0 0 \\ 
 99&  2 0 0 0 0 0 1 0 & 100& 1 0 0 0 1 0 0 0 \\ 
 101&  0 0 1 0 0 0 0 0 & & \\

\hline

\end{longtable}

\begin{longtable}{|c|c|c|c|}
\caption{Characteristics of the nilpotent orbits.}\label{tab:5ordchars}
\endfirsthead
\hline
\multicolumn{4}{|l|}{\small\slshape Characteristics.} \\
\hline 
\endhead
\hline 
\endfoot
\endlastfoot

\hline

no. &  characteristic & no. & characterisitc \\
\hline

1& 10 10 10 10 10 10 10 20 & 2& 10 10 0 10 10 10 10 10 \\ 
3& 10 0 10 10 10 0 10 10 & 4& 0 10 0 10 10 10 0 10 \\ 
5& 10 0 10 0 0 10 0 10 & 6& 1 9 1 9 10 10 1 9 \\ 
7& 10 0 3 7 7 3 7 3 & 8& 3 7 3 7 3 4 3 3 \\ 
9& 0 10 0 0 10 0 0 10 & 10& 0 0 10 0 0 0 10 0 \\ 
11& 1 9 0 1 10 0 1 9 & 12& 2 0 8 2 2 2 6 2 \\ 
13& 0 3 7 0 3 0 7 3 & 14& 0 0 0 0 0 0 0 10 \\ 
15& 1 8 1 1 10 1 1 8 & 16& 3 3 4 3 3 3 1 3 \\ 
17& 5 0 5 0 0 5 5 0 & 18& 1 2 7 1 3 1 6 3 \\ 
19& 0 5 0 5 0 5 0 5 & 20& 0 0 1 0 0 0 1 9 \\ 
21& 2 2 2 4 2 4 2 2 & 22& 6 0 4 0 0 6 4 0 \\ 
23& 3 3 1 3 3 1 2 4 & 24& 0 3 0 7 4 3 0 3 \\ 
25& 1 0 1 0 0 1 1 8 & 26& 0 1 0 1 0 1 0 9 \\ 
27& 0 8 2 0 10 2 0 8 & 28& 4 2 2 2 4 0 2 4 \\ 
29& 0 8 2 0 0 0 2 0 & 30& 2 2 2 2 2 2 2 2 \\ 
31& 1 2 1 6 4 3 1 2 & 32& 1 1 0 1 1 0 1 8 \\ 
33& 1 0 1 1 1 1 1 7 & 34& 3 2 0 5 3 5 0 2 \\ 
35& 3 2 0 5 2 0 3 0 & 36& 1 1 1 1 1 1 1 6 \\ 
37& 2 0 0 0 0 2 0 8 & 38& 0 2 0 0 2 0 2 6 \\ 
39& 0 0 5 0 0 5 0 0 & 40& 3 1 2 1 3 1 2 1 \\ 
41& 2 2 0 6 4 4 0 2 & 42& 1 2 1 3 2 1 1 2 \\ 
43& 2 0 1 0 1 1 0 8 & 44& 3 3 0 4 1 0 3 0 \\ 
45& 0 5 0 0 0 0 0 5 & 46& 0 0 0 3 3 0 0 7 \\ 
47& 1 1 1 1 1 1 2 4 & 48& 1 1 0 1 2 1 1 6 \\ 
49& 0 2 2 2 2 2 0 2 & 50& 0 4 0 0 0 0 0 6 \\ 
51& 2 0 2 0 2 0 2 4 & 52& 0 0 3 0 0 3 0 4 \\ 
53& 1 1 1 2 1 1 1 4 & 54& 1 3 1 1 0 1 1 3 \\ 
55& 0 1 0 2 3 0 1 6 & 56& 0 1 2 1 1 2 0 4 \\ 
57& 1 2 1 1 0 1 1 4 & 58& 0 3 2 0 0 2 0 3 \\ 
59& 1 1 1 1 1 2 1 3 & 60& 1 0 1 1 3 1 0 6 \\ 
61& 0 2 0 2 2 0 2 2 & 62& 0 0 0 0 0 0 5 0 \\ 
63& 2 1 1 2 1 1 1 1 & 64& 0 2 2 0 0 2 0 4 \\ 
65& 1 1 1 1 2 1 1 2 & 66& 2 0 1 2 1 1 1 2 \\ 
67& 0 0 1 0 0 1 4 0 & 68& 0 3 0 0 0 0 3 1 \\ 
69& 5 0 0 0 5 0 0 0 & 70& 0 1 0 1 1 0 4 0 \\ 
71& 1 2 0 1 0 1 2 1 & 72& 0 0 0 5 0 0 0 0 \\ 
73& 2 1 0 1 3 0 1 2 & 74& 1 0 1 0 1 1 3 0 \\ 
75& 3 0 2 0 1 0 2 0 & 76& 1 1 1 1 1 1 1 1 \\ 
77& 0 2 0 0 0 0 4 0 & 78& 0 1 0 4 0 0 0 1 \\ 
79& 0 0 2 0 4 0 0 6 & 80& 2 1 1 0 1 0 2 1 \\ 
81& 1 1 0 1 0 1 3 0 & 82& 0 0 0 4 0 0 0 2 \\ 
83& 1 0 1 1 1 1 2 0 & 84& 0 1 2 0 2 0 1 1 \\ 
85& 0 1 0 3 0 0 1 1 & 86& 2 0 0 0 2 0 3 0 \\ 
87& 2 0 1 1 1 1 0 1 & 88& 0 0 2 0 2 0 2 0 \\ 
89& 0 2 0 0 0 2 1 0 & 90& 2 0 1 0 1 0 3 0 \\ 
91& 1 0 1 2 0 1 0 1 & 92& 0 0 0 3 0 0 2 0 \\ 
93& 1 1 0 1 1 1 1 0 & 94& 0 1 0 2 0 1 1 0 \\ 
95& 0 1 0 0 0 3 0 0 & 96& 1 0 1 1 1 0 1 0 \\ 
97& 1 0 0 1 1 2 0 0 & 98& 3 0 0 0 0 0 0 1 \\ 
99& 0 0 0 2 0 2 0 0 & 100& 2 0 0 1 0 0 1 0 \\ 
101& 0 0 2 0 0 0 1 0 & 102& 0 1 0 1 1 1 0 0 \\ 
103& 1 0 1 0 0 1 0 0 & 104& 0 0 0 1 2 0 0 0 \\ 
105& 0 1 0 0 1 0 0 0 & &\\
\hline
\end{longtable}

\unitlength=1cm
\begin{figure}[htb]

\begin{graph}(10,21)
\roundnode{n1}(4.5,20)\nodetext{n1}{1}
\roundnode{n2}(4,19)\nodetext{n2}{2}
\roundnode{n3}(5,19)\nodetext{n3}{3}
\roundnode{n4}(4,18)\nodetext{n4}{4}
\roundnode{n5}(5,18)\nodetext{n5}{5}
\roundnode{n6}(2.5,17)\nodetext{n6}{6}
\roundnode{n7}(3.5,17)\nodetext{n7}{7}
\roundnode{n8}(6.5,17)\nodetext{n8}{8}
\roundnode{n9}(5.5,17)\nodetext{n9}{9}
\roundnode{n10}(4.5,17)\nodetext{n10}{10}
\roundnode{n11}(3.5,16)\nodetext{n11}{11}
\roundnode{n12}(5.5,16)\nodetext{n12}{12}
\roundnode{n13}(4.5,16)\nodetext{n13}{13}
\roundnode{n14}(6.5,16)\nodetext{n14}{\bf 14}

\roundnode{n15}(2.5,15)\nodetext{n15}{15}
\roundnode{n16}(7.5,15)\nodetext{n16}{16}
\roundnode{n17}(3.5,15)\nodetext{n17}{17}
\roundnode{n18}(4.5,15)\nodetext{n18}{18}
\roundnode{n19}(5.5,15)\nodetext{n19}{19}
\roundnode{n20}(6.5,15)\nodetext{n20}{\bf 20}

\roundnode{n24}(6.5,14)\nodetext{n24}{24}
\roundnode{n21}(2.5,14)\nodetext{n21}{21}
\roundnode{n22}(4.8,14)\nodetext{n22}{22}
\roundnode{n23}(5.5,14)\nodetext{n23}{23}
\roundnode{n25}(3.5,14)\nodetext{n25}{\bf 25}
\roundnode{n26}(7.5,14)\nodetext{n26}{\bf 26}

\roundnode{n27}(1,12.5)\nodetext{n27}{27}
\roundnode{n28}(4.5,12.5)\nodetext{n28}{28}
\roundnode{n29}(8.6,12.5)\nodetext{n29}{29}
\roundnode{n30}(7.6,12.5)\nodetext{n30}{30}
\roundnode{n31}(3.7,12.5)\nodetext{n31}{31}
\roundnode{n32}(5.2,12.5)\nodetext{n32}{\bf 32}
\roundnode{n33}(1.5,12.5)\nodetext{n33}{\bf 33}

\roundnode{n34}(1.7,11)\nodetext{n34}{34}
\roundnode{n35}(7,11)\nodetext{n35}{35}
\roundnode{n36}(5,11)\nodetext{n36}{\bf 36}
\roundnode{n37}(1,11)\nodetext{n37}{\bf 37}
\roundnode{n38}(0.2,11)\nodetext{n38}{\bf 38}

\roundnode{n39}(5.5,9.2)\nodetext{n39}{39}
\roundnode{n40}(7,9.2)\nodetext{n40}{40}
\roundnode{n41}(3.5,9.2)\nodetext{n41}{41}
\roundnode{n42}(8,9.2)\nodetext{n42}{42}
\roundnode{n43}(2.5,9.2)\nodetext{n43}{\bf 43}
\roundnode{n44}(9,9.2)\nodetext{n44}{44}
\roundnode{n45}(10,9.2)\nodetext{n45}{45}
\roundnode{n46}(1.3,9.2)\nodetext{n46}{\bf 46}
\roundnode{n47}(0.5,9.2)\nodetext{n47}{\bf 47}
\roundnode{n48}(4.5,9.2)\nodetext{n48}{\bf 48}

\roundnode{n49}(4,6.5)\nodetext{n49}{49}
\roundnode{n50}(7.7,6.5)\nodetext{n50}{\bf 50}
\roundnode{n51}(3.3,6.5)\nodetext{n51}{\bf 51}
\roundnode{n52}(1.5,6.5)\nodetext{n52}{\bf 52}
\roundnode{n53}(5,6.5)\nodetext{n53}{\bf 53}
\roundnode{n54}(6,6.5)\nodetext{n54}{54}
\roundnode{n55}(2.5,6.5)\nodetext{n55}{\bf 55}

\roundnode{n56}(3,5.3)\nodetext{n56}{\bf 56}
\roundnode{n57}(5,5.3)\nodetext{n57}{\bf 57}
\roundnode{n58}(4,5.3)\nodetext{n58}{58}
\roundnode{n59}(2,5.3)\nodetext{n59}{\bf 59}

\roundnode{n60}(2.2,4.3)\nodetext{n60}{\bf 60}
\roundnode{n61}(3.2,4.3)\nodetext{n61}{\bf 61}
\roundnode{n62}(1.3,4.3)\nodetext{n62}{\it{\color{blue}62}}
\roundnode{n63}(5,4.3)\nodetext{n63}{63}
\roundnode{n64}(4,4.3)\nodetext{n64}{\bf 64}

\roundnode{n65}(2.2,3.3)\nodetext{n65}{\bf 65}
\roundnode{n66}(4,3.3)\nodetext{n66}{\bf 66}
\roundnode{n67}(1.4,3.3)\nodetext{n67}{\it{\color{blue}67}}
\roundnode{n68}(3.2,3.3)\nodetext{n68}{\bf 68}

\roundnode{n69}(5.3,2)\nodetext{n69}{69}
\roundnode{n70}(2.2,2)\nodetext{n70}{\it{\color{blue}70}}
\roundnode{n71}(3.2,2)\nodetext{n71}{\bf 71}
\roundnode{n72}(8.2,2)\nodetext{n72}{72}
\freetext(8.8,2){\tiny (4,5)}

\roundnode{n73}(4.7,1)\nodetext{n73}{\bf 73}
\roundnode{n74}(2.7,1)\nodetext{n74}{\it{\color{blue}74}}
\roundnode{n75}(6.2,1)\nodetext{n75}{75}
\roundnode{n76}(2,1)\nodetext{n76}{\bf 76}
\roundnode{n77}(3.9,1)\nodetext{n77}{\it{\color{blue}77}}
\roundnode{n78}(7.6,1)\nodetext{n78}{78}
\freetext(8.2,1){\tiny (4,5)}

\roundnode{n79}(0.6,0)\nodetext{n79}{\bf 79}
\roundnode{n80}(4,0)\nodetext{n80}{\bf 80}
\roundnode{n81}(3.3,0)\nodetext{n81}{\it{\color{blue}81}}
\roundnode{n82}(5.5,0)\nodetext{n82}{\underline{82}}

\edge{n1}{n2}
\edge{n1}{n3}
\edge{n3}{n4}
\edge{n2}{n4}
\edge{n3}{n5}
\edge{n2}{n5}
\edge{n4}{n6}
\edge{n5}{n7}
\edge{n4}{n7}
\edge{n5}{n8}
\edge{n5}{n9}
\edge{n4}{n9}
\edge{n5}{n10}
\edge{n4}{n10}
\edge{n9}{n11}
\edge{n7}{n11}
\edge{n6}{n11}
\edge{n10}{n12}
\edge{n8}{n12}
\edge{n7}{n12}
\edge{n10}{n13}
\edge{n9}{n13}
\edge{n7}{n13}
\edge{n9}{n14}
\edge{n11}{n15}
\edge{n12}{n16}
\edge{n13}{n17}
\edge{n11}{n17}
\edge{n13}{n18}
\edge{n12}{n18}
\edge{n11}{n18}
\edge{n13}{n19}
\edge{n12}{n19}
\edge{n14}{n20}
\edge{n13}{n20}
\edge{n11}{n20}
\edge{n19}{n21}
\edge{n17}{n21}
\edge{n15}{n21}
\edge{n18}{n21}
\edge{n17}{n22}
\edge{n19}{n23}
\edge{n18}{n23}
\edge{n17}{n23}
\edge{n16}{n23}
\edge{n19}{n24}
\edge{n20}{n25}
\edge{n17}{n25}
\edge{n15}{n25}
\edge{n20}{n26}
\edge{n19}{n26}
\edge{n18}{n26}
\edge{n15}{n27}
\edge{n23}{n28}
\edge{n22}{n28}
\edge{n16}{n29}
\edge{n23}{n30}
\edge{n21}{n30}
\edge{n24}{n31}
\edge{n21}{n31}
\edge{n26}{n32}
\edge{n25}{n32}
\edge{n23}{n32}
\edge{n26}{n33}
\edge{n25}{n33}
\edge{n21}{n33}
\edge{n22}{n34}
\edge{n21}{n34}
\edge{n24}{n35}
\edge{n23}{n35}
\edge{n33}{n36}
\edge{n32}{n36}
\edge{n30}{n36}
\edge{n27}{n37}
\edge{n25}{n37}
\edge{n22}{n37}
\edge{n33}{n38}
\edge{n30}{n39}
\edge{n27}{n39}
\edge{n34}{n40}
\edge{n30}{n40}
\edge{n28}{n40}
\edge{n34}{n41}
\edge{n31}{n41}
\edge{n27}{n41}
\edge{n35}{n42}
\edge{n31}{n42}
\edge{n30}{n42}
\edge{n37}{n43}
\edge{n32}{n43}
\edge{n28}{n43}
\edge{n35}{n44}
\edge{n29}{n44}
\edge{n28}{n44}
\edge{n30}{n45}
\edge{n29}{n45}
\edge{n33}{n46}
\edge{n31}{n46}
\edge{n38}{n47}
\edge{n36}{n47}
\edge{n38}{n48}
\edge{n37}{n48}
\edge{n34}{n48}
\edge{n42}{n49}
\edge{n41}{n49}
\edge{n40}{n49}
\edge{n39}{n49}
\edge{n45}{n50}
\edge{n36}{n50}
\edge{n48}{n51}
\edge{n47}{n51}
\edge{n43}{n51}
\edge{n40}{n51}
\edge{n48}{n52}
\edge{n47}{n52}
\edge{n39}{n52}
\edge{n47}{n53}
\edge{n46}{n53}
\edge{n42}{n53}
\edge{n45}{n54}
\edge{n44}{n54}
\edge{n42}{n54}
\edge{n40}{n54}
\edge{n48}{n55}
\edge{n46}{n55}
\edge{n41}{n55}

\edge{n55}{n56}
\edge{n53}{n56}
\edge{n52}{n56}
\edge{n51}{n56}
\edge{n49}{n56}
\edge{n54}{n57}
\edge{n53}{n57}
\edge{n51}{n57}
\edge{n50}{n57}
\edge{n54}{n58}
\edge{n49}{n58}
\edge{n52}{n59}
\edge{n51}{n59}
\edge{n55}{n60}
\edge{n59}{n61}
\edge{n56}{n61}
\edge{n47}{n62}
\edge{n58}{n63}
\edge{n58}{n64}
\edge{n57}{n64}
\edge{n56}{n64}
\edge{n61}{n65}
\edge{n60}{n65}
\edge{n64}{n66}
\edge{n63}{n66}
\edge{n61}{n66}
\edge{n62}{n67}
\edge{n59}{n67}
\edge{n64}{n68}
\edge{n61}{n68}
\edge{n40}{n69}
\edge{n67}{n70}
\edge{n61}{n70}
\edge{n68}{n71}
\edge{n66}{n71}
\edge{n65}{n71}
\edge{n42}{n72}

\edge{n69}{n73}
\edge{n65}{n73}
\edge{n70}{n74}
\edge{n65}{n74}
\edge{n69}{n75}
\edge{n63}{n75}
\edge{n71}{n76}
\edge{n70}{n77}
\edge{n68}{n77}
\edge{n72}{n78}
\edge{n63}{n78}
\edge{n60}{n79}
\edge{n75}{n80}
\edge{n73}{n80}
\edge{n71}{n80}
\edge{n77}{n81}
\edge{n74}{n81}
\edge{n71}{n81}
\edge{n78}{n82}
\edge{n66}{n82}

\freetext(0,20.5){Dimension}
\freetext(-0.5,20){48}
\freetext(-0.5,19){47}
\freetext(-0.5,18){46}
\freetext(-0.5,17){45}
\freetext(-0.5,16){44}
\freetext(-0.5,15){43}
\freetext(-0.5,14){42}
\freetext(-0.5,12.5){41}
\freetext(-0.5,11){40}
\freetext(-0.5,9.2){39}
\freetext(-0.5,6.5){38}
\freetext(-0.5,5.3){37}
\freetext(-0.5,4.3){36}
\freetext(-0.5,3.3){35}
\freetext(-0.5,2){34}
\freetext(-0.5,1){33}
\freetext(-0.5,0){32}

\end{graph}
\caption{Hasse diagram of nilpotent orbits; top}\label{fig:5ord1}
\end{figure}

\vskip1ex

\begin{figure}[htb]

\begin{graph}(10,15)
\roundnode{n73}(4.7,14)\nodetext{n73}{\bf 73}
\roundnode{n74}(2.7,14)\nodetext{n74}{\it{\color{blue}74}}
\roundnode{n75}(6.2,14)\nodetext{n75}{75}
\roundnode{n76}(2,14)\nodetext{n76}{\bf 76}
\roundnode{n77}(3.9,14)\nodetext{n77}{\it{\color{blue}77}}
\roundnode{n78}(7.6,14)\nodetext{n78}{78}
\freetext(8.2,14){\tiny (4,5)}

\roundnode{n79}(0.6,13)\nodetext{n79}{\bf 79}
\roundnode{n80}(4.3,13)\nodetext{n80}{\bf 80}
\roundnode{n81}(3,13)\nodetext{n81}{\it{\color{blue}81}}
\roundnode{n82}(5.5,13)\nodetext{n82}{\underline{82}}

\roundnode{n83}(3.3,12)\nodetext{n83}{\it{\color{blue}83}}
\roundnode{n84}(1.5,12)\nodetext{n84}{\bf 84}
\roundnode{n85}(5.3,12)\nodetext{n85}{\underline{85}}

\roundnode{n86}(1.4,11)\nodetext{n86}{\it{\color{blue}86}}
\roundnode{n87}(3.2,11)\nodetext{n87}{\bf 87}

\roundnode{n88}(2.2,10)\nodetext{n88}{\it{\color{blue}88}}
\roundnode{n89}(5.8,10)\nodetext{n89}{\it{\color{blue}89}}
\roundnode{n90}(4.9,10)\nodetext{n90}{\it{\color{blue}90}}
\roundnode{n91}(3.2,10)\nodetext{n91}{\underline{91}}

\roundnode{n92}(5.2,9)\nodetext{n92}{\tt 92}
\roundnode{n93}(4.5,9)\nodetext{n93}{\it{\color{blue}93}}

\roundnode{n94}(4,8)\nodetext{n94}{\tt 94}

\roundnode{n95}(5.8,7)\nodetext{n95}{95}
\freetext(6.5,7){\tiny (5,2)}

\roundnode{n96}(4,6)\nodetext{n96}{\tt 96}

\roundnode{n97}(5.8,5)\nodetext{n97}{97}
\freetext(6.5,5){\tiny (5,2)}
\roundnode{n98}(1.5,5)\nodetext{n98}{\bf 98}

\roundnode{n99}(4.8,4)\nodetext{n99}{99}
\freetext(5.5,4){\tiny (4,2)}
\roundnode{n100}(3,4)\nodetext{n100}{\tt 100}

\roundnode{n101}(1.4,3)\nodetext{n101}{101}
\freetext(2.1,3){\tiny (3,3)}
\roundnode{n102}(4,3)\nodetext{n102}{102}
\freetext(4.7,3){\tiny (4,2)}

\roundnode{n103}(2.5,2)\nodetext{n103}{103}
\freetext(3.2,2){\tiny (3,2)}

\roundnode{n104}(4,1)\nodetext{n104}{104}
\freetext(4.7,1){\tiny (4,1)}

\roundnode{n105}(3.5,0)\nodetext{n105}{105}
\freetext(4.2,0){\tiny (2,1)}

\edge{n75}{n80}
\edge{n73}{n80}
\edge{n77}{n81}
\edge{n74}{n81}
\edge{n78}{n82}
\edge{n81}{n83}
\edge{n76}{n83}

\edge{n79}{n84}
\edge{n76}{n84}
\edge{n82}{n85}
\edge{n76}{n85}
\edge{n79}{n86}
\edge{n74}{n86}
\edge{n73}{n86}
\edge{n80}{n87}
\edge{n76}{n87}
\edge{n86}{n88}
\edge{n84}{n88}
\edge{n83}{n88}
\edge{n83}{n89}
\edge{n86}{n90}
\edge{n81}{n90}
\edge{n80}{n90}
\edge{n87}{n91}
\edge{n85}{n91}
\edge{n84}{n91}

\edge{n85}{n92}
\edge{n83}{n92}
\edge{n90}{n93}
\edge{n89}{n93}
\edge{n88}{n93}
\edge{n87}{n93}
\edge{n93}{n94}
\edge{n92}{n94}
\edge{n91}{n94}
\edge{n89}{n95}
\edge{n94}{n96}
\edge{n95}{n97}
\edge{n93}{n97}
\edge{n87}{n98}

\edge{n97}{n99}
\edge{n94}{n99}
\edge{n98}{n100}
\edge{n96}{n100}
\edge{n96}{n101}
\edge{n99}{n102}
\edge{n96}{n102}
\edge{n102}{n103}
\edge{n101}{n103}
\edge{n100}{n103}
\edge{n102}{n104}
\edge{n104}{n105}
\edge{n103}{n105}

\freetext(0,14.5){Dimension}
\freetext(-0.5,14){33}
\freetext(-0.5,13){32}
\freetext(-0.5,12){31}
\freetext(-0.5,11){30}
\freetext(-0.5,10){29}
\freetext(-0.5,9){28}
\freetext(-0.5,8){27}
\freetext(-0.5,7){26}
\freetext(-0.5,6){25}
\freetext(-0.5,5){24}
\freetext(-0.5,4){22}
\freetext(-0.5,3){21}
\freetext(-0.5,2){18}
\freetext(-0.5,1){14}
\freetext(-0.5,0){11}

\end{graph}
\caption{Hasse diagram of nilpotent orbits; bottom}\label{fig:5ord2}
\end{figure}

\begin{longtable}{|c|c|c|c|}
\caption{Characteristics of the nilpotent orbits.}\label{tab:charsE7}
\endfirsthead
\hline
\multicolumn{4}{|l|}{\small\slshape Characteristics.} \\
\hline 
\endhead
\hline 
\endfoot
\endlastfoot

\hline

no. &  characteristic & no. & characterisitc \\
\hline
1 & 6 6 6 6 6 6 12 & 2 &  6 6 6 0 6 6 6 \\ 
3 & 0 6 0 6 6 6 6 & 4 &  0 6 0 6 0 6 12 \\ 
5 & 2 4 2 6 4 6 6 & 6 &  6 0 6 0 6 0 6 \\ 
7 & 6 1 5 1 5 1 5 & 8 &  0 6 0 6 0 0 6 \\ 
9 & 0 6 0 0 6 6 0 & 10 &  4 0 2 4 2 6 6 \\ 
11 & 0 0 6 0 0 0 6 & 12 &  1 5 0 1 5 6 1 \\ 
13 & 1 5 1 1 4 5 1 & 14 &  2 2 2 2 4 4 2 \\ 
15 & 2 0 4 0 2 0 6 & 16 &  2 2 2 2 2 2 2 \\ 
17 & 0 1 5 0 1 1 5 & 18 &  2 4 2 0 4 6 0 \\ 
19 & 0 6 0 0 0 6 6 & 20 &  2 0 2 2 2 2 4 \\ 
21 & 1 1 4 1 1 1 4 & 22 &  2 1 3 1 1 1 5 \\ 
23 & 0 0 0 6 0 6 0 & 24 &  0 3 0 3 0 0 6 \\ 
25 & 2 2 0 2 2 2 2 & 26 &  1 0 2 3 1 3 3 \\ 
27 & 0 0 0 0 6 0 0 & 28 &  3 0 3 3 0 0 3 \\ 
29 & 0 6 0 0 0 0 0 & 30 &  0 1 0 5 0 6 1 \\ 
31 & 1 2 1 0 2 3 3 & 32 &  1 2 0 3 1 1 5 \\ 
33 & 2 0 4 2 0 0 4 & 34 &  1 4 0 1 1 1 1 \\ 
35 & 0 0 1 0 5 0 1 & 36 &  0 3 0 3 0 3 0 \\ 
37 & 2 0 2 0 2 2 2 & 38 &  0 4 0 2 0 2 0 \\ 
39 & 0 1 0 1 4 1 0 & 40 &  2 2 0 0 2 4 2 \\ 
41 & 0 0 0 0 0 0 6 & 42 &  0 2 0 0 4 0 0 \\ 
43 & 2 1 1 1 1 1 2 & 44 &  0 2 0 4 0 2 4 \\ 
45 & 1 0 1 0 4 1 1 & 46 &  0 2 0 2 0 2 2 \\ 
47 & 1 1 0 1 3 0 1 & 48 &  0 0 0 1 0 1 5 \\ 
49 & 1 0 1 1 2 1 1 & 50 &  3 0 0 0 3 3 0 \\ 
51 & 0 1 0 1 0 1 4 & 52 &  2 0 0 0 4 2 0 \\ 
53 & 1 0 2 0 1 3 0 & 54 &  0 3 0 0 0 0 3 \\ 
55 & 3 0 1 0 2 2 0 & 56 &  0 2 0 0 2 0 2 \\ 
57 & 2 0 1 0 3 1 0 & 58 &  0 2 0 0 0 0 4 \\ 
59 & 0 0 2 0 2 2 0 & 60 &  0 1 0 0 2 0 3 \\ 
61 & 0 1 0 0 0 2 4 & 62 &  1 1 0 1 1 1 1 \\ 
63 & 0 1 0 1 0 2 2 & 64 &  0 0 0 3 0 0 0 \\ 
65 & 1 0 0 1 1 1 2 & 66 &  0 1 0 2 0 0 1 \\ 
67 & 1 0 1 1 0 1 0 & 68 &  0 0 0 2 0 0 2 \\ 
69 & 0 1 0 1 0 1 1 & 70 &  2 0 0 1 0 0 0 \\ 
71 & 0 0 2 0 0 0 0 & 72 &  1 0 1 0 0 0 1 \\ 
73 & 0 0 0 0 0 3 0 & 74 &  0 0 0 1 0 2 0 \\ 
75 & 0 1 0 0 0 1 0 &  & \\
\hline
\end{longtable}

\unitlength=1cm
\begin{figure}[htb]

\begin{graph}(10,21)
\roundnode{n1}(4.5,20)\nodetext{n1}{1}
\roundnode{n2}(4,19)\nodetext{n2}{2}
\roundnode{n3}(5,19)\nodetext{n3}{3}
\roundnode{n4}(2.5,18)\nodetext{n4}{4}
\roundnode{n5}(4,18)\nodetext{n5}{5}
\roundnode{n6}(5,18)\nodetext{n6}{6}

\roundnode{n7}(6,17)\nodetext{n7}{7}
\roundnode{n8}(5,17)\nodetext{n8}{8}
\roundnode{n9}(4,17)\nodetext{n9}{9}
\roundnode{n10}(2.5,17)\nodetext{n10}{10}

\roundnode{n11}(4,16)\nodetext{n11}{11}
\roundnode{n12}(3,16)\nodetext{n12}{12}
\roundnode{n13}(5,16)\nodetext{n13}{13}
\roundnode{n14}(6,16)\nodetext{n14}{14}

\roundnode{n15}(3,15)\nodetext{n15}{15}
\roundnode{n16}(4.6,15)\nodetext{n16}{16}
\roundnode{n17}(4,15)\nodetext{n17}{17}
\roundnode{n18}(2.2,15)\nodetext{n18}{18}

\roundnode{n19}(1.5,14)\nodetext{n19}{19}
\roundnode{n20}(4.5,14)\nodetext{n20}{20}
\roundnode{n21}(5.2,14)\nodetext{n21}{21}
\roundnode{n22}(3,14)\nodetext{n22}{22}
\roundnode{n23}(8,14)\nodetext{n23}{23}

\roundnode{n24}(2.2,13)\nodetext{n24}{24}
\roundnode{n25}(4.7,13)\nodetext{n25}{25}
\roundnode{n26}(5.6,13)\nodetext{n26}{26}
\roundnode{n27}(6.9,13)\nodetext{n27}{\it{\color{blue}27}}
\roundnode{n28}(7.6,13)\nodetext{n28}{28}
\roundnode{n29}(8.2,13)\nodetext{n29}{29}
\roundnode{n30}(3.5,13)\nodetext{n30}{30}

\roundnode{n31}(3,12)\nodetext{n31}{31}
\roundnode{n32}(1.2,12)\nodetext{n32}{32}
\roundnode{n33}(6.8,12)\nodetext{n33}{33}
\roundnode{n34}(7.8,12)\nodetext{n34}{34}
\roundnode{n35}(5,12)\nodetext{n35}{\it{\color{blue}35}}
\roundnode{n36}(4,12)\nodetext{n36}{36}

\roundnode{n37}(3,11)\nodetext{n37}{37}
\roundnode{n38}(7,11)\nodetext{n38}{38}
\roundnode{n39}(5,11)\nodetext{n39}{\it{\color{blue}39}}

\roundnode{n40}(1.3,10)\nodetext{n40}{40}
\roundnode{n41}(0.5,10)\nodetext{n41}{\bf 41}
\roundnode{n42}(6,10)\nodetext{n42}{\it{\color{blue}42}}
\roundnode{n43}(4.8,10)\nodetext{n43}{43}
\roundnode{n44}(2.5,10)\nodetext{n44}{44}
\roundnode{n45}(4,10)\nodetext{n45}{\it{\color{blue}45}}

\roundnode{n46}(3.7,9)\nodetext{n46}{46}
\roundnode{n47}(4.8,9)\nodetext{n47}{\it{\color{blue}47}}
\roundnode{n48}(1.5,9)\nodetext{n48}{\bf 48}

\roundnode{n49}(4.8,8)\nodetext{n49}{\it{\color{blue}49}}
\roundnode{n50}(3,8)\nodetext{n50}{50}
\roundnode{n51}(1.9,8)\nodetext{n51}{\bf 51}

\roundnode{n52}(4.6,7)\nodetext{n52}{\it{\color{blue}52}}
\roundnode{n53}(3.8,7)\nodetext{n53}{53}
\roundnode{n54}(7,7)\nodetext{n54}{54}
\roundnode{n55}(3,7)\nodetext{n55}{55}

\roundnode{n56}(7.2,6)\nodetext{n56}{\it{\color{blue}56}}
\roundnode{n57}(6.2,6)\nodetext{n57}{\it{\color{blue}57}}
\roundnode{n58}(2.7,6)\nodetext{n58}{\bf 58}
\roundnode{n59}(5.2,6)\nodetext{n59}{\it{\color{blue}59}}

\roundnode{n60}(4,5)\nodetext{n60}{60}
\freetext(4.6,4.9){\tiny (5,2)}
\roundnode{n61}(0.7,5)\nodetext{n61}{\bf 61}
\roundnode{n62}(6.8,5)\nodetext{n62}{\it{\color{blue}62}}

\roundnode{n63}(2.7,4)\nodetext{n63}{\bf 63}
\roundnode{n64}(6.5,4)\nodetext{n64}{\tt 64}

\roundnode{n65}(4.4,3)\nodetext{n65}{65}
\freetext(5,3){\tiny (5,2)}
\roundnode{n66}(7,3)\nodetext{n66}{\tt 66}

\roundnode{n67}(7,2)\nodetext{n67}{\tt 67}

\roundnode{n68}(3.5,1)\nodetext{n68}{$\overline{68}$}

\roundnode{n69}(4.2,0)\nodetext{n69}{$\overline{69}$}

\edge{n1}{n2}
\edge{n1}{n3}
\edge{n1}{n4}
\edge{n3}{n5}
\edge{n2}{n5}
\edge{n3}{n6}
\edge{n2}{n6}

\edge{n6}{n7}
\edge{n6}{n8}
\edge{n5}{n8}
\edge{n6}{n9}
\edge{n5}{n9}
\edge{n5}{n10}
\edge{n4}{n10}

\edge{n10}{n11}
\edge{n8}{n11}
\edge{n10}{n12}
\edge{n9}{n12}
\edge{n7}{n13}
\edge{n8}{n13}
\edge{n9}{n13}
\edge{n9}{n14}
\edge{n8}{n14}

\edge{n11}{n15}
\edge{n13}{n16}
\edge{n14}{n16}
\edge{n11}{n17}
\edge{n12}{n17}
\edge{n13}{n17}
\edge{n14}{n17}
\edge{n12}{n18}
\edge{n13}{n18}

\edge{n10}{n19}
\edge{n17}{n20}
\edge{n15}{n20}
\edge{n16}{n21}
\edge{n17}{n21}
\edge{n15}{n22}
\edge{n17}{n22}
\edge{n18}{n22}
\edge{n12}{n23}
\edge{n14}{n23}

\edge{n15}{n24}
\edge{n19}{n24}
\edge{n20}{n25}
\edge{n21}{n25}
\edge{n22}{n25}
\edge{n18}{n26}
\edge{n20}{n26}
\edge{n23}{n26}

\edge{n16}{n27}
\edge{n16}{n28}
\edge{n16}{n29}
\edge{n19}{n30}
\edge{n23}{n30}

\edge{n22}{n31}
\edge{n24}{n31}
\edge{n26}{n31}
\edge{n30}{n31}
\edge{n20}{n32}
\edge{n24}{n32}
\edge{n21}{n33}
\edge{n28}{n33}
\edge{n25}{n34}
\edge{n28}{n34}
\edge{n29}{n34}
\edge{n25}{n35}
\edge{n27}{n35}
\edge{n25}{n36}
\edge{n26}{n36}

\edge{n31}{n37}
\edge{n32}{n37}
\edge{n36}{n37}
\edge{n33}{n38}
\edge{n34}{n38}
\edge{n36}{n38}
\edge{n33}{n39}
\edge{n35}{n39}
\edge{n36}{n39}

\edge{n31}{n40}
\edge{n24}{n41}
\edge{n38}{n42}
\edge{n39}{n42}
\edge{n37}{n43}
\edge{n38}{n43}
\edge{n26}{n44}
\edge{n32}{n44}
\edge{n37}{n45}
\edge{n39}{n45}

\edge{n37}{n46}
\edge{n44}{n46}
\edge{n42}{n47}
\edge{n43}{n47}
\edge{n44}{n47}
\edge{n45}{n47}
\edge{n31}{n48}
\edge{n41}{n48}
\edge{n44}{n48}

\edge{n46}{n49}
\edge{n47}{n49}
\edge{n37}{n50}
\edge{n40}{n50}
\edge{n46}{n51}
\edge{n48}{n51}

\edge{n45}{n52}
\edge{n50}{n52}
\edge{n46}{n53}
\edge{n50}{n53}
\edge{n43}{n54}
\edge{n46}{n54}
\edge{n43}{n55}
\edge{n50}{n55}

\edge{n49}{n56}
\edge{n54}{n56}
\edge{n47}{n57}
\edge{n52}{n57}
\edge{n55}{n57}
\edge{n51}{n58}
\edge{n54}{n58}
\edge{n49}{n59}
\edge{n52}{n59}
\edge{n53}{n59}

\edge{n56}{n60}
\edge{n58}{n60}
\edge{n40}{n61}
\edge{n48}{n61}
\edge{n56}{n62}
\edge{n57}{n62}
\edge{n59}{n62}

\edge{n53}{n63}
\edge{n51}{n63}
\edge{n61}{n63}
\edge{n49}{n64}

\edge{n60}{n65}
\edge{n62}{n65}
\edge{n63}{n65}
\edge{n62}{n66}
\edge{n64}{n66}

\edge{n66}{n67}
\edge{n65}{n68}
\edge{n66}{n68}
\edge{n67}{n69}
\edge{n68}{n69}

\freetext(0,20.5){Dimension}
\freetext(-0.5,20){42}
\freetext(-0.5,19){41}
\freetext(-0.5,18){40}
\freetext(-0.5,17){39}
\freetext(-0.5,16){38}
\freetext(-0.5,15){37}
\freetext(-0.5,14){36}
\freetext(-0.5,13){35}
\freetext(-0.5,12){34}
\freetext(-0.5,11){33}
\freetext(-0.5,10){32}
\freetext(-0.5,9){31}
\freetext(-0.5,8){30}
\freetext(-0.5,7){29}
\freetext(-0.5,6){28}
\freetext(-0.5,5){27}
\freetext(-0.5,4){26}
\freetext(-0.5,3){25}
\freetext(-0.5,2){23}
\freetext(-0.5,1){22}
\freetext(-0.5,0){21}
\end{graph}
\caption{Hasse diagram of nilpotent orbits, $E_7$, top}\label{fig:E7_3}
\end{figure}

\begin{figure}[htb]

\begin{graph}(10,13)
\roundnode{n52}(4.6,12)\nodetext{n52}{\it{\color{blue}52}}
\roundnode{n53}(3.8,12)\nodetext{n53}{53}
\roundnode{n54}(7,12)\nodetext{n54}{54}
\roundnode{n55}(3,12)\nodetext{n55}{55}

\roundnode{n56}(7.2,11)\nodetext{n56}{\it{\color{blue}56}}
\roundnode{n57}(6.2,11)\nodetext{n57}{\it{\color{blue}57}}
\roundnode{n58}(2.7,11)\nodetext{n58}{\bf 58}
\roundnode{n59}(5.2,11)\nodetext{n59}{\it{\color{blue}59}}

\roundnode{n60}(4,10)\nodetext{n60}{60}
\freetext(4.6,9.9){\tiny (5,2)}
\roundnode{n61}(0.7,10)\nodetext{n61}{\bf 61}
\roundnode{n62}(6.8,10)\nodetext{n62}{\it{\color{blue}62}}

\roundnode{n63}(2.7,9)\nodetext{n63}{\bf 63}
\roundnode{n64}(6.5,9)\nodetext{n64}{\tt 64}

\roundnode{n65}(4.4,8)\nodetext{n65}{65}
\freetext(5,8){\tiny (5,2)}
\roundnode{n66}(7,8)\nodetext{n66}{\tt 66}

\roundnode{n67}(7,7)\nodetext{n67}{\tt 67}

\roundnode{n68}(3.5,6)\nodetext{n68}{$\overline{68}$}

\roundnode{n69}(4.2,5)\nodetext{n69}{$\overline{69}$}

\roundnode{n70}(5.2,4)\nodetext{n70}{\tt 70}

\roundnode{n71}(6.2,3)\nodetext{n71}{71}
\freetext(6.8,3){\tiny (3,3)}

\roundnode{n72}(5.2,2)\nodetext{n72}{72}
\freetext(5.8,2){\tiny (3,2)}
\roundnode{n73}(2.2,2)\nodetext{n73}{73}
\freetext(2.8,2){\tiny (6,1)}

\roundnode{n74}(3.2,1)\nodetext{n74}{74}
\freetext(3.8,1){\tiny (4,1)}
\roundnode{n75}(4.5,0)\nodetext{n75}{75}
\freetext(5.2,0){\tiny (2,1)}

\edge{n54}{n56}
\edge{n52}{n57}
\edge{n55}{n57}
\edge{n54}{n58}
\edge{n52}{n59}
\edge{n53}{n59}

\edge{n56}{n60}
\edge{n58}{n60}
\edge{n56}{n62}
\edge{n57}{n62}
\edge{n59}{n62}

\edge{n53}{n63}
\edge{n61}{n63}

\edge{n60}{n65}
\edge{n62}{n65}
\edge{n63}{n65}
\edge{n62}{n66}
\edge{n64}{n66}

\edge{n66}{n67}
\edge{n65}{n68}
\edge{n66}{n68}
\edge{n67}{n69}
\edge{n68}{n69}

\edge{n67}{n70}
\edge{n67}{n71}
\edge{n69}{n72}
\edge{n70}{n72}
\edge{n71}{n72}
\edge{n53}{n73}
\edge{n63}{n73}

\edge{n69}{n74}
\edge{n73}{n74}
\edge{n72}{n75}
\edge{n74}{n75}

\freetext(0,12.5){Dimension}
\freetext(-0.5,12){29}
\freetext(-0.5,11){28}
\freetext(-0.5,10){27}
\freetext(-0.5,9){26}
\freetext(-0.5,8){25}
\freetext(-0.5,7){23}
\freetext(-0.5,6){22}
\freetext(-0.5,5){21}
\freetext(-0.5,4){20}
\freetext(-0.5,3){18}
\freetext(-0.5,2){17}
\freetext(-0.5,1){16}
\freetext(-0.5,0){11}
\end{graph}
\caption{Hasse diagram of nilpotent orbits, $E_7$, bottom}\label{fig:E7_3-2}
\end{figure}

\end{document}